\numberwithin{equation}{section}
\numberwithin{figure}{section}
\theoremstyle{plain}
\newtheorem{thm}{\protect\theoremname}[section]
\theoremstyle{definition}
\newtheorem{defn}[thm]{\protect\definitionname}
\theoremstyle{plain}
\newtheorem{lem}[thm]{\protect\lemmaname}
\theoremstyle{plain}
\newtheorem{prop}[thm]{\protect\propositionname}
\newcommand{\lyxaddress}[1]{
	\par {\raggedright #1
	\vspace{1.4em}
	\noindent\par}
}
\numberwithin{equation}{section}
\numberwithin{figure}{section}
\theoremstyle{plain}
\theoremstyle{plain}
\newtheoremstyle{boldremark}
    {\dimexpr\topsep/2\relax} 
    {\dimexpr\topsep/2\relax} 
    {}          
    {}          
    {\bfseries} 
    {.}         
    {.5em}      
    {}          
\theoremstyle{boldremark}
\newtheorem{brem} [thm] {Remark} 
\renewcommand{\epsilon}{\varepsilon}
\providecommand{\definitionname}{Definition}
\providecommand{\lemmaname}{Lemma}
\providecommand{\theoremname}{Theorem}
\providecommand{\definitionname}{Definition}
\providecommand{\lemmaname}{Lemma}
\providecommand{\propositionname}{Proposition}
\providecommand{\theoremname}{Theorem}
\DeclareMathOperator{\dive}{div}
\begin{document}
\title{\textbf{Gradient bounds for strongly singular or degenerate parabolic systems}}
\author{Pasquale Ambrosio, Fabian Bäuerlein}
\date{April 25, 2024}

\begin{abstract} We consider weak solutions $u:\Omega_{T}\rightarrow\mathbb{R}^{N}$
to parabolic systems of the type
\[
u_{t}-\mathrm{div}\,A(x,t,Du)=f \qquad \mathrm{in}\ \Omega_{T}=\Omega\times(0,T),
\]
where $\Omega$ is a bounded open subset of $\mathbb{R}^{n}$ for
$n\geq2$, $T>0$ and the datum $f$ belongs to a suitable Orlicz
space. The main novelty here is that the partial map $\xi\mapsto A(x,t,\xi)$
satisfies standard $p$-growth and ellipticity conditions for $p>1$
only outside the unit ball $\{\vert\xi\vert<1\}$. For $p>\frac{2n}{n+2}$
we establish that any weak solution
\[
u\in C^{0}((0,T);L^{2}(\Omega,\mathbb{R}^{N}))\cap L^{p}(0,T;W^{1,p}(\Omega,\mathbb{R}^{N}))
\]
admits a locally bounded spatial gradient $Du$. Moreover, assuming
that $u$ is essentially bounded, we recover the same result in the
case $1<p\leq\frac{2n}{n+2}$ and $f=0$. Finally, we also
prove the uniqueness of weak solutions to a Cauchy-Dirichlet problem
associated with the parabolic system above. We emphasize that our
results include both the degenerate case $p\geq2$ and the singular
case $1<p<2$.
\noindent \vspace{0.2cm}
\end{abstract}

\maketitle

\noindent \textbf{Mathematics Subject Classification:} 35B45, 35B65,
35K51, 35K65, 35K67.

\noindent \textbf{Keywords:} Singular parabolic systems; degenerate parabolic systems; regularity. 

\section{Introduction}

In this paper, we are interested in the
regularity of weak solutions $u:\Omega_{T}\rightarrow\mathbb{R}^{N}$
of strongly degenerate or singular parabolic systems of the type 
\begin{equation}
\partial_{t}u^{i}-\dive A^{i}(x,t,Du)=f^{i}\qquad \text{for }i\in\left\{ 1,\ldots,N\right\} \qquad \text{in }\Omega_{T}=\Omega\times(0,T).\label{eq:syst}
\end{equation}
Here and in the following $\Omega$ is a bounded and open subset of $\mathbb{R}^n$ ($n\geq 2$), $T>0$ and $N\geq 1$. The exact assumptions on the vector field $A=(A^1,\dots,A^N)\colon\Omega_T\times\mathbb R^{Nn}\to\mathbb R^{Nn}$ and the vector-valued inhomogeneity $f=(f^{1},\ldots,f^{N})$, $N \geq 1$, will be discussed in detail later. As our main result we will show that weak solutions are locally Lipschitz-continuous in the spatial directions, i.e.~that the spatial gradient $Du$ is locally bounded. 

The chosen terminology can be illustrated by the model system of equations. In fact, in the prototypical system we have in mind, the vector field $A$ is given by
\begin{equation}\label{A-proto}
    A(x,t,\xi) = \frac{( |\xi| -  1)_+^{p-1}}{|\xi|}\xi,
    \qquad(x,t)\in\Omega_T, \xi\in\mathbb R^{Nn},
\end{equation}
for some $p>1$.
First, we note that any time-independent 1-Lipschitz continuous function $u$ is a solution of the homogeneous model system. Accordingly, well established methods from regularity theory, using the second weak spatial derivatives of $u$, cannot be utilized. We will circumvent this difficulty by approximation. 

Before we specify in detail the structure of the considered system of equations, we briefly discuss some results already available in the literature. So far, most progress has been made for the associated elliptic, i.e. time-independent, problem. L. Brasco \cite{brasco2011global} proved the Lipschitz continuity of weak solutions. A. Clop, R. Giova, F. Hatami and A. Passarelli di Napoli \cite{CGHP} generalized this result for systems ($N \geq 2$). The two previous works arise from the study of \textit{strongly degenerate} functionals. These can be regarded as \textit{asymptotically convex} functionals, i.e. functionals having a $p$-Laplacian type structure only at infinity. Such class of functionals has been widely investigated, since the local Lipschitz regularity result
by Chipot and Evans \cite{ChiEva}. In particular, we mention generalizations allowing super- and
sub-quadratic growth \cite{GiaMod,LePaVe,Raymond}, as well as lower order terms \cite{PassVer}. Extensions to various other contexts can be found in the non-exhaustive list \cite{CGGHigherasym}–\cite{Cupini3}, \cite{Eleuteri2016}–\cite{Foss3}, \cite{Scheven}.
The first result concerning gradient continuity was achieved by F. Santambrogio and V. Vespri \cite{santambrogio2010continuity}. The authors restricted themselves to equations and the dimension $n=2$ and proved that a gradient dependent function of the form $g(Du)$ is continuous whenever $g$ is continuous and vanishes on the set of degeneracy, i.e. on the unit ball in case of the prototypical equation. Subsequently, M. Colombo and A. Figalli \cite{ColoFig,COLOMBO201494} extended this result to arbitrary dimensions $n\ge 2$. 
A similar result in the vectorial setting was recently shown by V. Bögelein, F. Duzaar, R. Giova and A. Passarelli di Napoli \cite{BDGP}. L. Mons \cite{mons2023higher} extended this result to systems satisfying more general structural conditions. On the contrary, for the time-dependent problem, little is known so far. The first author \cite{Ambrosio+2023} succeeded in showing a fractional higher differentiability result. Furthermore, A. Gentile and A. Passarelli di Napoli as well as the first author and A. Passarelli di Napoli obtained a higher differentiability result in \cite{GPHigherregularity} and \cite{APwidelydeg} respectively. 

In this paper we prove the boundedness of the spatial gradient under quite general assumptions on the vector field $A$. In particular, this can be seen as a parabolic analogue of Brasco's Lipschitz-continuity result. Our main result reads as follows. For notation and definitions we refer to Section \ref{sec:prelim}.

\begin{thm}\label{thm:theo1} 
Let $n\geq2$, $N\ge1$, $1<p<\infty$ and $f\in L^{\hat{n}+2}\log^{\alpha}L_{loc}(\Omega_{T},\mathbb{R}^{N})$,
where $\hat{n}$ is defined according to $(\ref{eq:nhat})-(\ref{eq:betacondition})$ below and $\alpha>2 \hat{n}+3$.
Moreover, assume that 
\[
u\in C^{0}\left((0,T);L^{2}\left(\Omega,\mathbb{R}^{N}\right)\right)\cap L^{p}\left(0,T;W^{1,p}\left(\Omega,\mathbb{R}^{N}\right)\right)
\]
is a weak solution to $(\ref{eq:syst})$, where $A$ is defined by
$(\ref{eq:strucfun})$ and the structure conditions $(\mathrm{H_{1}})-(\mathrm{H_{4}})$ below
are in force. Then, for any parabolic cylinder $Q_{r}(z_{1})\Subset Q_{R}(z_{0})\Subset\Omega_{T}$ with $r\in(0,1)$ and any $s\in(0,1)$, we have that:\\
\\
$\mathrm{(}a\mathrm{)}$ if $p>\frac{2n}{n+2}$
, the estimate 
\[
\underset{Q_{sr}(z_{1})}{\mathrm{ess} \sup}\,\vert Du\vert \leq \frac{c}{(1-s)^{\frac{\hat{n}+2}{\kappa}}} \cdot \exp\left(c\, \frac{\Vert f\Vert_{L^{\hat{n}+2}\log^{\alpha}L(Q_{r}(z_{1}))}^{\Theta}}{r^{\frac{n-\hat{n}}{\alpha}}}\right)\cdot\left[\fint_{Q_{r}(z_{1})}(\vert Du\vert +1)^{p}\,dz\right]^{\frac{1}{\kappa}}
\]
holds true for some positive constants $c\equiv c(N,n,\hat{n},\alpha,p,C_{1},K)$
and $\Theta\equiv\Theta(\hat{n},\alpha)$, and for 
\[
\kappa:=\begin{cases}
\frac{p (\hat{n}+2)-2 \hat{n}}{2} & \text{if }\frac{2n}{n + 2}<p<2,\\
2 & \text{if } p\geq2;
\end{cases}
\]
$\mathrm{(}b\mathrm{)}$ if $1<p\leq \frac{2n}{n+2}$, $f=0$ and $u\in L^\infty(Q_R(z_0),\mathbb{R}^{N})$, the estimate
\begin{equation*}
    \underset{Q_{sr}(z_{1})}{\mathrm{ess} \sup} \, \vert Du\vert 
    \leq c \,\frac{\left(1+\Vert u\Vert_{L^{\infty}(Q_{R}(z_{0}))}^{2}\right)^{\frac{n+2}{2 p}}}{[(1-s)^{2} \,r]^{\frac{n +2}{p}}}
    \left[\fint_{Q_{r}(z_{1})}(\vert Du\vert +1)^{p} dz\right]^{\frac{1}{p}}
\end{equation*}
holds true for some positive constant $c\equiv c(N,n,\alpha,p,C_{1},K)$.
\end{thm}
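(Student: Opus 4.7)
The plan is to proceed by an approximation argument, since the degeneracy on the unit ball $\{|\xi|<1\}$ prevents any direct differentiation of the equation. First I would replace the vector field $A$ by a non-degenerate regularization of the form
\[
A_\varepsilon(x,t,\xi) := A(x,t,\xi) + \varepsilon(1+|\xi|^2)^{\frac{p-2}{2}}\xi, \qquad \varepsilon\in(0,1),
\]
which now satisfies standard $p$-ellipticity and $p$-growth conditions \emph{everywhere}, and consider the unique weak solution $u_\varepsilon$ of the corresponding Cauchy--Dirichlet problem with boundary/initial values matching $u$ on a suitable subcylinder. Standard parabolic theory then furnishes enough differentiability of $u_\varepsilon$ (in particular the existence of the second spatial derivatives in the relevant weighted sense) to legitimately test the differentiated system.

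The core of the argument is then a Caccioppoli-type inequality for truncated quantities of the form $(|Du_\varepsilon|^{2}-1)_+^{q/2}$ for $q\geq 2$. The key point is that the quasi-diagonal, monotone structure of $A$ outside the unit ball delivers the standard $p$-Laplacian lower bound exactly on the set where the truncation is non-zero, while the contributions coming from $\{|Du_\varepsilon|\leq 1\}$ are harmless. Coupling such Caccioppoli estimates with the parabolic Sobolev embedding on shrinking cylinders and iterating à la Moser (or De Giorgi) yields, in case $\mathrm{(}a\mathrm{)}$, an $L^\infty$-bound on $(|Du_\varepsilon|-1)_+$ provided $p>\frac{2n}{n+2}$, which is precisely the threshold for the iteration to close with $L^p$ data. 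The inhomogeneity $f$ enters through a right-hand side which has to be reabsorbed; the Orlicz integrability $L^{\hat n+2}\log^\alpha L$ with $\alpha>2\hat n+3$ is exactly what is needed to turn the blow-up of the iteration constants into the exponential factor $\exp\bigl(c\,\|f\|^{\Theta}\bigr)$ by summing a geometric series of Young-type inequalities in Orlicz scale.

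For case $\mathrm{(}b\mathrm{)}$, where $p\leq \frac{2n}{n+2}$, the Moser iteration with $L^p$-starting integrability \emph{does not} close because the parabolic Sobolev exponent is too small. Here I would exploit the additional assumption $u\in L^\infty$: a Caccioppoli estimate directly for $u_\varepsilon$ (or for its powers) upgrades the integrability via the identity $u_\varepsilon(|Du_\varepsilon|-1)_+\in L^q$ for sufficiently large $q$, which plays the role of the missing integrability and allows the iteration to close, with the $L^\infty$-norm of $u$ entering the final estimate to the power $(n+2)/p$. In both cases I would finally pass to the limit $\varepsilon\to 0$: weak convergence $Du_\varepsilon\rightharpoonup Du$ together with the $\varepsilon$-uniform $L^\infty$ bounds, combined with lower semicontinuity, transfers the estimate to the original solution $u$.

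The main obstacle I foresee is deriving the correct Caccioppoli-type inequality that simultaneously handles (i) the splitting between the non-elliptic region $\{|\xi|\leq 1\}$ and the $p$-elliptic region $\{|\xi|>1\}$ in a way uniform in $\varepsilon$, and (ii) the delicate Orlicz reabsorption that produces the exponential dependence on $\|f\|_{L^{\hat n+2}\log^\alpha L}$; balancing the iteration constants to keep them summable requires the sharp choice $\alpha>2\hat n+3$, and any slackness in the Caccioppoli constants breaks the passage from polynomial to exponential growth in the inhomogeneous estimate.
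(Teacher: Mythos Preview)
Your overall strategy---regularize, derive $\varepsilon$-uniform gradient bounds for $u_\varepsilon$ via Moser iteration on truncated quantities, then pass to the limit---matches the paper's. But there are three concrete gaps.

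First, and most seriously for $1<p<2$: your regularization $A_\varepsilon=A+\varepsilon(1+|\xi|^2)^{(p-2)/2}\xi$ does \emph{not} satisfy standard $p$-growth for its $\xi$-gradient. By hypothesis $(\mathrm{H_3})$, $\partial_{ss}F(x,t,s)\sim(s-1)^{p-2}$ near $s=1^+$, which blows up when $p<2$; adding a smooth perturbation does not cure this, so $D_\xi A_\varepsilon$ is still unbounded at $|\xi|=1$. Hence the off-the-shelf parabolic regularity that you invoke to get $D^2u_\varepsilon\in L^2_{loc}$ is unavailable, and you cannot legitimately differentiate and test. The paper fixes this by an additional smoothing of $F$ itself near $s=1$: one replaces $F$ by $F_\varepsilon(x,t,s):=F(x,t,v_\varepsilon(s)+1)$ where $v_\varepsilon$ is a mollification of $s\mapsto\max\{\varepsilon,s-1\}$, so that $\partial_{ss}F_\varepsilon$ is bounded (with a constant that degenerates as $\varepsilon\downarrow0$, which is harmless because the eventual Caccioppoli estimate is taken on the set $\{|Du_\varepsilon|>1+\delta/2\}$ where $F_\varepsilon\equiv F$).

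Second, in case $(b)$ you assume $u\in L^\infty$, but your iteration needs a uniform bound on $\|u_\varepsilon\|_{L^\infty}$, not on $\|u\|_{L^\infty}$. This does not come for free; the paper proves a maximum principle for the approximating homogeneous system (testing with $(u_\varepsilon^i-k)_+$ componentwise) to get $\|u_\varepsilon\|_{L^\infty}\le\sqrt{N}\,\|u\|_{L^\infty}$. Third, weak convergence $Du_\varepsilon\rightharpoonup Du$ plus lower semicontinuity is not enough to pass to the limit: the right-hand side contains $\fint(H_\delta(Du_\varepsilon))^p$, and lower semicontinuity goes the wrong way there. The paper establishes \emph{strong} $L^p$-convergence $G_\delta(Du_\varepsilon)\to G_\delta(Du)$ (by testing the difference of the two systems with $u_\varepsilon-u$ and exploiting a monotonicity-type inequality for $A_\varepsilon$), which gives pointwise a.e.\ convergence along a subsequence and closes the argument.
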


In the subcritical case $1<p\leq \frac{2n}{n+2}$ , the weak solutions to $(\ref{eq:syst})$ may not be bounded (see \cite[Sub-chapter 5.4]{DiBene} and note that the $p$-Laplacian satisfies our growth assumptions). Therefore, the extra assumption $u\in L^\infty(Q_R(z_0),\mathbb{R}^{N})$ in Theorem~\ref{thm:theo1} (\textit{b}) is natural. 

Note that some widely degenerate systems can be interpreted, similarly to functionals, as asymptotically regular systems. Therefore, in the special case that $A(x,t,Du)\equiv A(Du)\approx |Du|^{p-2}Du$ for $|Du|\gg 1$ and $p>\frac{2n}{n+2}$ we recover the results obtained in \cite{Kuusi-Mingione, Boegelein}. 

Finally, we explain the choice of the parameter $\hat n$, which is defined by 
\begin{equation}
\hat{n}:=\begin{cases}
\begin{array}{cc}
n & \mathrm{if}\,\, n>2\\
2+\beta & \mathrm{if}\,\, n=2
\end{array}\end{cases}\label{eq:nhat}
\end{equation}
for some $\beta>0$. If $n=2$ and $1<p<2$ we choose $\beta$ in such a way that 
\begin{equation}
    0<\beta < \frac{4(p-1)}{2-p}.\label{eq:betacondition}
\end{equation}
This choice ensures that in the case $n=2$ we have 
$p>\frac{2\Hat{n}}{\Hat{n}+2}$ for any $p>1$. Indeed, this condition will be decisive to carry out the proof in Subsection \ref{subsec:iteration1}.

\subsection{Structural conditions on the vector field \textit{A}}
To specify the structure of the vector field $A:\Omega_{T}\times\mathbb{R}^{Nn}\rightarrow\mathbb{R}^{Nn}$,
we consider a function 
\[
F\colon\Omega_{T}\times[0,\infty)\rightarrow[0,\infty),\qquad        (x,t,s)\mapsto F(x,t,s),
\]
which is convex with respect to the $s$-variable and vanishes for
all $s\in[0,1]$. Furthermore, we shall assume that the partial map
$s\mapsto F(x,t,s)$ is in $C^{1}(\mathbb{R}^{+})\cap C^{2}(\mathbb{R}^{+}\setminus\{1\})$
for almost every $(x,t)\in\Omega_{T}$, while for every $(t,s)\in(0,T)\times[0,\infty)$
the map $x\mapsto F(x,t,s)$ is differentiable almost everywhere.
We additionally suppose that there exist an exponent $p>1$ and some
positive constants $L$, $C_{1}>1$ and $K$ such that, for all $s>1$
and for almost every $x$, $y\in\Omega$ and $t\in(0,T)$, the function
$F$ satisfies the following growth assumptions: 
\[
\begin{cases}
\begin{array}{cc}
\frac{1}{L} (s-1)^{p} \leq F(x,t,s) \leq L s^{p} & (\mathrm{H_{1}})\\
\frac{1}{C_{1}} (s-1)^{p-1} \leq \partial_{s}F(x,t,s) \leq C_{1} (s-1)^{p-1} & (\mathrm{H_{2}})\\
\frac{1}{C_{1}} (s-1)^{p-2} \leq \partial_{ss}F(x,t,s) \leq C_{1} (s-1)^{p-2} & (\mathrm{H_{3}})\\
\left|\partial_{s}F(x,t,s)-\partial_{s}F(y,t,s)\right| \leq K\left|x-y\right| s^{p-1} & (\mathrm{H_{4}})
\end{array} & \end{cases}
\]
Then, the vector field  $A:\Omega_{T}\times\mathbb{R}^{Nn}\rightarrow\mathbb{R}^{Nn}$
is defined by 
\begin{equation}
A(x,t,\xi):=\begin{cases}
\frac{\partial_{s}F(x, t, \left|\xi\right|)}{\left|\xi\right|}\, \xi & \mathrm{if }\,\, \xi\in\mathbb{R}^{Nn}\setminus\{0\},\\[5pt]
0 & \mathrm{if }\,\,\xi=0.
\end{cases}\label{eq:strucfun}
\end{equation}
Note that for the prototypical case 
\[
F(x,t,s)=\frac{a(x,t)}{p} (s-1)_{+}^{p},
\]
where $a:\Omega_{T}\rightarrow\mathbb{R}^{+}$ and $x\mapsto a(x,t)$
is a Lipschitz continuous function that is bounded from below by a
positive constant, one can easily deduce that
the growth conditions ($\mathrm{H_{1}}$)$-$($\mathrm{H_{4}}$) are
fulfilled. If furthermore $a(x,t)\equiv 1$, then we recover the vector field in~\eqref{A-proto}.

\subsection{Strategy of the proof}

Our approach is inspired by \cite{CGHP, CGGHigherasym, Eleuteri2016}. The proof will be achieved by a parabolic Moser iteration technique. However, the implementation is quite subtle due to the degeneracy of the differential operator. 
Since weak solutions may not be twice weakly differentiable with respect to the $x$-variable, we approximate the original system (\ref{eq:syst}). The approximation is chosen in such a way that the regularized vector field $A_\varepsilon$, $\varepsilon>0$, satisfies standard $p$-growth assumptions.
However, proceeding at this point with a standard Moser iteration, the constants would blow up as $\varepsilon\downarrow 0$.
This problem will be overcome by the choice of an appropriate test function in the derivation of the Caccioppoli-type inequality. At this point it is helpful to realize that for $\delta>0$ large enough and $|Du_\varepsilon|>1+\delta$ the $p$-growth conditions of the approximating systems are satisfied independently of $\varepsilon$.
Therefore, we choose a test function vanishing on the set $\{|Du_\varepsilon|\leq 1+\delta\}$. 
We note that the choice of such a test function requires the existence of second weak spatial derivatives, which is the reason why we had to introduce the approximating solutions. 
This test function allows to obtain a Caccioppoli-type inequality and in turn we set up a kind of Moser iteration scheme. 
Note that, similarly to the treatment of the parabolic $p$-Laplacian, in the Moser iteration we have to distinguish between three different regimes of the parameter $p$. The first one is the superquadratic case $p\geq 2$, the second one is the subquadratic supercritical case $\frac{2n}{n+2}< p < 2$ and finally we have the subcritical case $1<p\leq \frac{2n}{n+2}$. In the last case, we need to ensure that also the approximating solutions $u_\varepsilon$ are uniformly bounded with respect to $\varepsilon$. This is achieved by a maximum principle.
In all three previous cases, we prove the boundedness of the spatial gradient $Du_\epsilon$ of the approximating solutions together with quantitative estimates. Here it is worthwhile to note that our quantitative estimates are uniform in $\varepsilon$. Since the approximating solutions converge strongly in $L^p$ to the original solution, the statement of Theorem~\ref{thm:theo1} follows after passing to the limit as $\varepsilon\downarrow 0$. 

\subsection{Plan of the paper}

In Subsection \ref{subsec:notation},  we introduce the notation adopted throughout the paper. In Subsections \ref{subsec:Orlicz-spaces} and \ref{subsec:Steklov}, we recall some basic facts on the used function spaces and the regularization in time. In the following Subsections \ref{subsec:approx} and \ref{subsec:approx2}, the approximating vector field is defined and its growth properties are obtained. Thereby, we distinguish the subquadratic and superquadratic cases.
Subsection \ref{subsec:Algebraic} is devoted to some algebraic inequalities needed for our purposes.

In Section \ref{sec:uniqueness}, we define the approximating problems and prove a strong convergence result in $L^p$. As a by-product, we obtain the uniqueness of the weak solutions to a Cauchy-Dirichlet problem associated with system (\ref{eq:syst}).

In Section \ref{sec:comparison}, we establish a maximum principle for solutions to the approximating problem. This allows us to show that the approximating solutions are essentially bounded, provided that the original solution is itself essentially bounded.

In Section \ref{sec:weak diff} we lay the groundwork for Section \ref{sec:Local}, where we prove local $L^{\infty}$-bounds for the spatial gradient of the approximating solutions. The proof is divided into several subsections. In Subsection \ref{subsec:Caccioppoli} we establish a suitable Caccioppoli-type inequality, from which we derive a reverse Hölder-type inequality in Subsection \ref{subsec:reverse-Hölder}. In these subsections, we need to address separately the three regimes of the parameter $p$ that we referred to above. For this reason, the Moser iteration procedure is split into two steps: in Subsection \ref{subsec:iteration1} we deal with the supercritical case, while in Subsection \ref{subsec:iteration2} we consider the subcritical one.

Finally, in Section \ref{sec:Final} we give the proof of Theorem \ref{thm:theo1}.
\medskip

\textbf{Acknowledgements.} Part of this
paper was written while P. Ambrosio was visiting the University of
Salzburg in November and December 2022. He is thankful to Verena Bögelein
and the host institution for their kind invitation and constant support.
The authors gratefully acknowledge fruitful discussions with Verena
Bögelein and Antonia Passarelli di Napoli, who provided valuable comments
and suggestions during the preparation of the manuscript. This work
has been partially supported by the FWF-Project P36295-N. P. Ambrosio
is a member of the GNAMPA group of INdAM that partially supported
his research through the INdAM$-$GNAMPA 2024 Project ``Fenomeno di Lavrentiev, Bounded Slope Condition e regolarità
per minimi di funzionali integrali con crescite non standard e lagrangiane non uniformemente
convesse'' (CUP: E53C23001670001).

\section{Preliminaries\label{sec:prelim}}

\subsection{Notation and essential tools}\label{subsec:notation}

In this paper we shall denote by $C$ or
$c$ a general positive constant that may vary on different occasions,
even within the same line of estimates. Relevant dependencies on parameters
and special constants will be suitably emphasized using parentheses
or subscripts. The norm we use on the Euclidean spaces $\mathbb{R}^{k}$
will be the standard Euclidean one and it will be denoted by $\left| \cdot \right|$.
In particular, for the vectors $\xi,\eta\in\mathbb{R}^{Nn}$, we write
$\langle\xi,\eta\rangle$ for the usual inner product and $\left|\xi\right|:=\langle\xi,\xi\rangle^{\frac{1}{2}}$
for the corresponding Euclidean norm.

For points in space-time, we will frequently use
abbreviations like $z=(x,t)$ or $z_{0}=(x_{0},t_{0})$, for spatial
variables $x$, $x_{0}\in\mathbb{R}^{n}$ and times $t$, $t_{0}\in\mathbb{R}$.
We also denote by $B_{\varrho}(x_{0})=\left\{ x\in\mathbb{R}^{n}:\left|x-x_{0}\right|<\varrho\right\} $
the $n$-dimensional open ball with radius $\varrho>0$ and center
$x_{0}\in\mathbb{R}^{n}$; when not important, or clear from the context,
we shall omit to denote the center as follows: $B_{\varrho}\equiv B_{\varrho}(x_{0})$.
Unless otherwise stated, different balls in the same context will
have the same center. Moreover, we use the notation 
\[
Q_{\varrho}(z_{0}):=B_{\varrho}(x_{0})\times(t_{0}-\varrho^{2},t_{0}),\qquad z_{0}=(x_{0},t_{0})\in\mathbb{R}^{n}\times\mathbb{R}, \varrho>0,
\]
for the backward parabolic cylinder with vertex $(x_{0},t_{0})$ and
width $\varrho$. We shall sometimes omit the dependence on the vertex
when all cylinders occurring in a proof share the same vertex. For
a general cylinder $Q=B\times(t_{0},t_{1})$, where $B\subset\mathbb{R}^{n}$
and $t_{0}<t_{1}$, we denote by 
\[
\partial_{\mathrm{par}}Q:=(\overline{B}\times\{t_{0}\})\cup(\partial B\times(t_{0},t_{1}))
\]
the \textit{parabolic boundary} of $Q$.

If $E\subseteq\mathbb{R}^{k}$ is a Lebesgue-measurable
set, then we will denote by $\vert E\vert$ its $k$-dimensional Lebesgue
measure. When $0<\vert E\vert<\infty$, the mean
value of a function $v\in L^{1}(E)$ is defined by 
\[
(v)_{E}:=\fint_{E}v(y) dy = \frac{1}{\vert E\vert}\int_{E}v(y) dy.
\]
For every $\upsilon\in L_{loc}^{1}(Q,\mathbb{R}^{k})$,
where $Q\subset\mathbb{R}^{n+1}$ and $k\in\mathbb{N}$, we define
the mollified function $\upsilon_{\varrho}$ as follows:
\begin{equation}
\upsilon_{\varrho}(z):=\int_{\mathbb{R}^{n+1}}\upsilon(\tilde{z}) \eta_{\varrho}(z-\tilde{z}) d\tilde{z},\label{eq:molli}
\end{equation}
where 
\[
\eta_{\varrho}(z):= \frac{1}{\varrho^{n+1}}\,  \eta_{1}\left(\frac{z}{\varrho}\right),\qquad \varrho>0,
\]
with $\eta_{1}\in C_{0}^{\infty}(\mathcal{B}_{1}(0))$\footnote{Here $\mathcal{B}_{1}(0)$ denotes the $(n+1)$-dimensional open unit
ball centered at the origin.} denoting the standard, non-negative, radially symmetric mollifier
in $\mathbb{R}^{n+1}$. Obviously, here the function $\upsilon$ is
meant to be extended by the $k$-dimensional null vector
outside $Q$. 

In this work, we define a weak solution
to (\ref{eq:syst}) as follows: 
\begin{defn}
A function $u\in C^{0}\left((0,T);L^{2}\left(\Omega,\mathbb{R}^{N}\right)\right)\cap L^{p}\left(0,T;W^{1,p}\left(\Omega,\mathbb{R}^{N}\right)\right)$
is a \textit{weak solution} of system (\ref{eq:syst}) if and only
if for any test function $\phi\in C_{0}^{\infty}(\Omega_{T},\mathbb{R}^{N})$
the following integral identity holds: 
\begin{equation}
\int_{\Omega_{T}}\left(u\cdot\partial_{t}\phi-\langle A(x,t,Du),D\phi\rangle\right) dz = -\int_{\Omega_{T}}f\cdot\phi \,dz.\label{eq:weaksol}
\end{equation}
\end{defn}

We conclude this first part of the preliminaries
by recalling the following iteration lemma, which is a standard tool
for ``reabsorbing'' certain terms and can be found, for example,
in \cite[Lemma 6.1]{Giusti}. 
\begin{lem}\label{lem:Giusti}
Let $0\leq\rho_{0}<\rho_{1}<\infty$
and assume that $\Psi:[\rho_{0},\rho_{1}]\rightarrow[0,\infty)$ is
a bounded function satisfying 
\[
\Psi(\rho) \leq \vartheta \Psi(r)+\frac{C}{(r-\rho)^{\sigma}}
\]
for all $\rho_{0}\leq\rho<r\leq\rho_{1}$, for some $\sigma>0$, $\vartheta\in[0,1)$
and a non-negative constant $C$. Then, there exists a constant $\kappa\equiv\kappa(\sigma,\vartheta)>0$
such that for all $\rho_{0}\leq\rho<r\leq\rho_{1}$ we have 
\[
\Psi(\rho) \leq \kappa  \frac{C}{(r-\rho)^{\sigma}} .
\]
\end{lem}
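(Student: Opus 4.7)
The plan is to set up a telescoping iteration on a geometric sequence of radii interpolating between $\rho$ and $r$, and to exploit the contraction factor $\vartheta<1$ to sum a convergent geometric series. Concretely, I fix $\rho_0\le\rho<r\le\rho_1$ and a parameter $\tau\in(0,1)$ to be chosen, and define the increasing sequence
\[
r_0:=\rho,\qquad r_{i+1}:=r_i+(1-\tau)\tau^i(r-\rho),\qquad i\ge 0,
\]
so that $r_i\uparrow r$ since $\sum_{i\ge 0}(1-\tau)\tau^i=1$, and $r_{i+1}-r_i=(1-\tau)\tau^i(r-\rho)$.

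Applying the assumed inequality with the pair $(r_i,r_{i+1})$ yields, for every $i\ge 0$,
\[
\Psi(r_i)\le \vartheta\,\Psi(r_{i+1})+\frac{C}{(1-\tau)^{\sigma}(r-\rho)^{\sigma}}\,\tau^{-i\sigma}.
\]
I would then iterate this $N$ times to obtain
\[
\Psi(\rho)\le \vartheta^{N}\Psi(r_N)+\frac{C}{(1-\tau)^{\sigma}(r-\rho)^{\sigma}}\sum_{i=0}^{N-1}\bigl(\vartheta\tau^{-\sigma}\bigr)^{i}.
\]
The key step is to ensure that the geometric factor $\vartheta\tau^{-\sigma}$ is strictly less than $1$: since $\vartheta\in[0,1)$, I can choose $\tau\in(0,1)$ with $\tau^{\sigma}>\vartheta$, for instance $\tau:=\bigl((1+\vartheta)/2\bigr)^{1/\sigma}\in(0,1)$, so that $\vartheta\tau^{-\sigma}=\frac{2\vartheta}{1+\vartheta}<1$.

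With this choice, the geometric series is bounded by $\bigl(1-\vartheta\tau^{-\sigma}\bigr)^{-1}$ uniformly in $N$. Moreover, using the standing hypothesis that $\Psi$ is \emph{bounded} on $[\rho_0,\rho_1]$, the remainder $\vartheta^{N}\Psi(r_N)$ tends to $0$ as $N\to\infty$. Letting $N\to\infty$ therefore gives
\[
\Psi(\rho)\le \frac{1}{(1-\tau)^{\sigma}\bigl(1-\vartheta\tau^{-\sigma}\bigr)}\cdot\frac{C}{(r-\rho)^{\sigma}},
\]
so that the constant $\kappa:=(1-\tau)^{-\sigma}\bigl(1-\vartheta\tau^{-\sigma}\bigr)^{-1}$ depends only on $\sigma$ and $\vartheta$, as required. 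The only subtle point, and the single place where the boundedness of $\Psi$ is actually used, is the vanishing of the remainder term $\vartheta^{N}\Psi(r_N)$; without this hypothesis one could only conclude a one-sided bound at finite iteration depth. Everything else is a careful bookkeeping of the geometric sums, together with the concrete choice of $\tau$ that makes the series $\sum(\vartheta\tau^{-\sigma})^{i}$ convergent.
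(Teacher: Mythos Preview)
Your proof is correct and is precisely the standard argument. The paper does not supply its own proof of this lemma; it merely cites \cite[Lemma~6.1]{Giusti}, and the argument you give is essentially the one found there.
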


\subsection{Orlicz spaces}\label{subsec:Orlicz-spaces}

Here we recall some basic properties of
Orlicz spaces that will be needed later on (for more details, we refer
to \cite{Ada}). Let $\Psi:[0,\infty)\rightarrow[0,\infty)$ be a
Young function, i.e. $\Psi(0)=0,$ $\Psi$ is increasing and convex.
If $\varSigma$ is an open subset of $\mathbb{R}^{k}$, we define
the Orlicz space $L^{\Psi}(\varSigma)$ generated by the Young function
$\Psi$ as the set of the measurable functions $v:\varSigma\rightarrow\mathbb{R}$
such that 
\[
\int_{\varSigma}\Psi\left(\frac{|v|}{\lambda}\right) dx<\infty
\]
for some $\lambda>0$. When equipped with the Luxemburg norm 
\[
\Vert v\Vert_{L^{\Psi}(\varSigma)}:=\inf\left\{ \lambda>0:\int_{\varSigma}\Psi\left(\frac{|v|}{\lambda}\right) dx\leq1\right\} ,
\]
$L^{\Psi}(\varSigma)$ is a Banach space.

The Zygmund space $L^{q}\log^{\alpha}L(\varSigma)$,
for $1\leq q<\infty$, $\alpha\in\mathbb{R}$ ($\alpha\geq0$ for
$q=1$), is defined as the Orlicz space $L^{\Psi}(\varSigma)$ generated
by the Young function $\Psi(s)\simeq s^{q}\log^{\alpha}(e+s)$ for
every $s\geq s_{0}\geq0$. Therefore, a measurable function $v$ on
$\varSigma$ belongs to $L^{q}\log^{\alpha}L(\varSigma)$ if 
\[
\int_{\varSigma}|v|^{q}\log^{\alpha}(e+|v|) \,dx<\infty.
\]
Moreover, we record that for the function 
\[
\Psi(s)=s^{q}\log^{\alpha}(e+s),\qquad q>1, \alpha\in\mathbb{R},
\]
the following inequality 
\[
\int_{\varSigma}|v|^{q}\log^{\alpha}(e+|v|) \,dx \leq \Vert v\Vert_{L^{\Psi}(\varSigma)}^{\theta}
\]
holds for every $v\in L^{q}\log^{\alpha}L(\varSigma)$ and some $\theta=\theta(q)>0$
(see \cite{CGHP}).

\subsection{Steklov averages}\label{subsec:Steklov}

In this section, we recall the definition
and some elementary properties of Steklov averages. Let us denote
a domain in space time by $Q':=\Omega'\times I$, where $\Omega'\subseteq\Omega$
is a bounded domain and $I:=(t_{1},t_{2})\subseteq(0,T)$. For every
$h\in(0,t_{2}-t_{1})$ and $v\in L^{1}(\Omega'\times I,\mathbb{R}^{k})$,
where $k\in\mathbb{N}$, the \textit{Steklov average} $[v]_{h}(\cdot,t)$
is defined by 
\[
[v]_{h}(x,t):=\begin{cases}
\begin{array}{cc}
\frac{1}{h}\int_{t}^{t+h}v(x,s) ds &   \text{for }   t\in(t_{1},t_{2}-h),\\
0 & \text{for }   t\in[t_{2}-h,t_{2}),
\end{array}\end{cases}
\]
for $x\in\Omega'$. This definition implies, for almost every $(x,t)\in\Omega'\times(t_{1},t_{2}-h)$,
\[
\frac{\partial[v]_{h}}{\partial t}(x,t)= \frac{v(x,t+h)-v(x,t)}{h}.
\]

The proof of the following result is straightforward
from the theory of Lebesgue spaces (see \cite[Lemma 3.2]{DiBene}).\medskip{}

\begin{lem}\label{lem:Stek}
Let $q,r\geq1$ and $v\in L^{r}\left(t_{1},t_{2};L^{q}(\Omega')\right)$.
Then, as $h\rightarrow0$, $[v]_{h}$ converges to $v$ in $L^{r}\left(t_{1},t_{2}-\epsilon;L^{q}(\Omega')\right)$
for every $\epsilon\in(0,t_{2}-t_{1})$. If $v\in C^{0}\left((t_{1},t_{2});L^{q}(\Omega')\right)$,
then as $h\rightarrow0$, $[v]_{h}(\cdot,t)$ converges to $v(\cdot,t)$
in $L^{q}(\Omega')$ for every $t\in(t_{1},t_{2}-\epsilon)$, $\forall \epsilon\in(0,t_{2}-t_{1})$. 
\end{lem}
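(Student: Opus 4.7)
The plan is to view $v$ as a Bochner-measurable function $v\colon(t_1,t_2)\to L^q(\Omega')$ and to interpret $[v]_h(\cdot,t)$ as the Bochner average $\frac{1}{h}\int_t^{t+h}v(\cdot,s)\,ds$. The result is then essentially the classical ``Lebesgue-average'' statement transposed to the Banach-space-valued setting, and I would follow the standard $\epsilon/3$ argument: uniform bound on the averaging operator, convergence on a dense class, and conclusion by density.

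More precisely, for the first part I would first note the uniform bound
\[
\Vert [v]_h\Vert_{L^r(t_1,t_2-\epsilon;L^q(\Omega'))}\leq \Vert v\Vert_{L^r(t_1,t_2;L^q(\Omega'))},
\]
which follows by applying Jensen's inequality to the integral average in time inside the $L^q$-norm (using that $\Vert\cdot\Vert_{L^q(\Omega')}$ is a seminorm and $s\mapsto s^r$ is convex), together with Fubini. Next I would verify the claim on the dense subspace $C^0_c((t_1,t_2);L^q(\Omega'))$: for $v$ in this subspace the map $s\mapsto v(\cdot,s)$ is uniformly continuous into $L^q(\Omega')$, so $\Vert[v]_h(\cdot,t)-v(\cdot,t)\Vert_{L^q(\Omega')}\to 0$ uniformly in $t\in(t_1,t_2-\epsilon)$ as $h\to 0$, which immediately gives convergence in $L^r(t_1,t_2-\epsilon;L^q(\Omega'))$. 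The general case then follows from the standard density argument: given $v\in L^r(t_1,t_2;L^q(\Omega'))$ and $\delta>0$, pick $w\in C^0_c((t_1,t_2);L^q(\Omega'))$ with $\Vert v-w\Vert_{L^r(L^q)}<\delta$, and split
\[
\Vert [v]_h-v\Vert_{L^r(L^q)} \leq \Vert [v-w]_h\Vert_{L^r(L^q)} + \Vert [w]_h-w\Vert_{L^r(L^q)} + \Vert w-v\Vert_{L^r(L^q)},
\]
where the first and third terms are bounded by $\delta$ via the uniform estimate, and the middle term vanishes as $h\to 0$ by the dense-class step.

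The second part is an immediate consequence of the definition of continuity. If $v\in C^0((t_1,t_2);L^q(\Omega'))$, fix $t\in(t_1,t_2-\epsilon)$; since $s\mapsto v(\cdot,s)$ is continuous at $t$ as a map into $L^q(\Omega')$, for every $\eta>0$ there exists $h_0>0$ such that $\Vert v(\cdot,s)-v(\cdot,t)\Vert_{L^q(\Omega')}<\eta$ for all $s\in[t,t+h_0]$. Then, by Minkowski's inequality in the Bochner integral,
\[
\Vert [v]_h(\cdot,t)-v(\cdot,t)\Vert_{L^q(\Omega')} \leq \frac{1}{h}\int_t^{t+h}\Vert v(\cdot,s)-v(\cdot,t)\Vert_{L^q(\Omega')}\,ds \leq \eta
\]
whenever $0<h<h_0$, proving the pointwise convergence in $L^q(\Omega')$.

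The only mildly subtle point is the well-posedness of the Bochner integral representation of $[v]_h$: one needs to check that $(x,t)\mapsto v(x,t+s)$ is jointly measurable so that Fubini applies when identifying the spatial average with the value of the vector-valued integral. This is a routine consequence of the Pettis measurability theorem in the separable setting, so I do not expect it to be a real obstacle; the whole lemma is a standard tool for which the delicate part is merely bookkeeping the restriction to $(t_1,t_2-\epsilon)$ that arises from the definition vanishing on $[t_2-h,t_2)$.
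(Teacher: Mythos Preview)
Your argument is correct and is the standard one. Note that the paper does not actually give a proof of this lemma: it simply states that the result is straightforward from the theory of Lebesgue spaces and refers the reader to \cite[Lemma~3.2]{DiBene}. Your Bochner-space $\epsilon/3$ argument (uniform boundedness of the averaging operator, convergence on the dense class $C^0_c((t_1,t_2);L^q(\Omega'))$, then density) together with the direct continuity estimate for the second assertion is exactly how one would fill in this reference, and there are no gaps.
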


For further needs, we now recall the well-known
Steklov average formulation of (\ref{eq:syst}) in $Q'=\Omega'\times(t_{1},t_{2})$.
Assume that $u\in C^{0}\left((0,T);L^{2}\left(\Omega,\mathbb{R}^{N}\right)\right)\cap L^{p}\left(0,T;W^{1,p}\left(\Omega,\mathbb{R}^{N}\right)\right)$
is a weak solution to (\ref{eq:syst}) in $\Omega_{T}$ and let $0<h<t_{2}-t_{1}.$
Then, the Steklov average $[u]_{h}(\cdot,t)$ satisfies, for all times
$t\in(t_{1},t_{2}-h)$, 
\[
\int_{\Omega'\times\{t\}}\left(\partial_{t}[u]_{h}\cdot\phi +\langle[A(x,t,Du)]_{h},D\phi\rangle\right) dx = \int_{\Omega'\times\{t\}}[f]_{h}\cdot\phi \,dx
\]
for all $\phi\in C_{0}^{\infty}(\Omega',\mathbb{R}^{N})$. 

\subsection{Approximation of the function \texorpdfstring{$\boldsymbol{F}$}{F} for \texorpdfstring{$\boldsymbol{1<p\leq2}$}{1<p<2}}\label{subsec:approx}

As already mentioned, we assume that
for almost every $(x,t)\in\Omega_{T}$ the map $s\mapsto F(x,t,s)$
is in $C^{1}(\mathbb{R}^{+})\cap C^{2}(\mathbb{R}^{+}\setminus\{1\})$.
Moreover, we notice that in the case $1<p< 2$ both bounds for $\partial_{ss}F(x,t,\cdot)$
in $(\mathrm{H_{3}})$ blow up as $s\rightarrow1^{+}$. This very
singular behavior of $F$ must be avoided, since we need to use the
second derivative $\partial_{ss}F$ to establish a local bound for
the spatial gradient of the weak solutions to (\ref{eq:syst}). Therefore,
for $1<p<2$ and for almost every $(x,t)\in\Omega_{T}$ we approximate
the partial map $s\mapsto F(x,t,s)$ by smoothing it around the singularity
of $\partial_{ss}F(x,t,\cdot)$, in such a way that the resulting
approximation $F_{\varepsilon}(x,t,\cdot)$ coincides with $F(x,t,\cdot)$
outside a small neighborhood of the singularity $s=1$.

Thus, in this section we will assume that $1<p\leq 2$,
unless otherwise stated. For $\varepsilon\in\left(0,\tfrac{1}{2}\right)$
we define the function $v_{\varepsilon}:\mathbb{R}_{0}^{+}\rightarrow\mathbb{R}^{+}$
by 
\begin{equation}
v_{\varepsilon}(s):=\frac{1}{\varepsilon}\int_{\mathbb{R}}\eta_{1}\Big(\frac{w-s}{\varepsilon}\Big)\max \{\varepsilon,w-1\} \,dw,\label{eq:app1}
\end{equation}
where $\eta_{1}\in C_{0}^{\infty}((-1,1))$ denotes the standard,
non-negative, radially symmetric mollifier in $\mathbb{R}$. Keeping
this definition in mind, in what follows we will show that an approximation
of $F(x,t,\cdot)$ is given by 
\[
\tilde{F}_{\varepsilon}(x,t,s):=F(x,t,v_{\varepsilon}(s)+1),\qquad \varepsilon \in \left(0, \tfrac{1}{2}\right).
\]

Firstly, we need to prove that $\tilde{F}_{\varepsilon}$
satisfies non-degenerate growth conditions for $0<\varepsilon<\frac{1}{2}$.
Therefore, we begin our analysis by studying the growth of this function.\\
 By assumption, we know that for almost every $(x,t)\in\Omega_{T}$
the map $s\mapsto\tilde{F}_{\varepsilon}(x,t,s)$ belongs to $C^{2}(\mathbb{R}^{+})$.
For later purposes, we now note that one can easily check that 
\begin{equation}
v_{\varepsilon}(s)=\begin{cases}
\varepsilon & \text{if } 0\leq s\leq1\\
s-1 &    \text{if } s\geq1+2\varepsilon
\end{cases}\label{eq:appro1}
\end{equation}
and 
\begin{equation}
0\leq v'_{\varepsilon}(s)\leq\mathds{1}_{\{ \, \cdot\,> 1\}}(s) ,\qquad 0 \leq v''_{\varepsilon}(s) \leq \tfrac{C}{\varepsilon} \, \mathds{1}_{\{1 < \, \cdot \, < 1+2 \varepsilon\}}(s)\label{eq:app2}
\end{equation}
for all $s\geq0$ and for some universal constant $C>0$. Furthermore,
from the growth assumption ($\mathrm{H_{1}}$) and from definition
(\ref{eq:app1}) we can deduce that 
\[
\frac{1}{c L} (s^{p}-1) \leq \tilde{F}_{\varepsilon}(x,t,s) \leq c L (s^{p}+1)
\]
for all $s\geq1+2 \varepsilon$ and for almost every $(x,t)\in\Omega_{T}$.
As for the derivatives of $\tilde{F}_{\varepsilon}$ with respect
to the $s$-variable, for almost every $(x,t)\in\Omega_{T}$ we have
\begin{equation}
\partial_{s}\tilde{F}_{\varepsilon}(x,t,s) = \partial_{s}F(x,t,v_{\varepsilon}(s)+1)  v'_{\varepsilon}(s)\label{eq:der1}
\end{equation}
and 
\begin{equation}
\partial_{ss}\tilde{F}_{\varepsilon}(x,t,s) = \partial_{ss}F(x,t,v_{\varepsilon}(s)+1) (v'_{\varepsilon}(s))^{2}+\partial_{s}F(x,t,v_{\varepsilon}(s)+1)  v''_{\varepsilon}(s),\label{eq:der2}
\end{equation}
and from assumptions ($\mathrm{H}_{2}$) and ($\mathrm{H}_{3}$)
it follows that $\partial_{s}\tilde{F}_{\varepsilon}$, $\partial_{ss}\tilde{F}_{\varepsilon}\geq0$.
Moreover, combining (\ref{eq:app2}), (\ref{eq:der1}), ($\mathrm{H}_{2}$)
and the fact that $v_{\varepsilon}(s)\leq\max \{2 \varepsilon,s-1\}\leq s$
for every $s>1$ and every $\varepsilon\in\left(0,\frac{1}{2}\right)$,
for almost every $(x,t)\in\Omega_{T}$ we obtain 
\begin{equation*}
\partial_{s}\tilde{F}_{\varepsilon}(x,t,s) \leq C_{1} (v_{\varepsilon}(s))^{p-1} \mathds{1}_{\{ \cdot > 1\}}(s) \leq C_{1} s^{p-1} \mathds{1}_{\{ \cdot > 1\}}(s) \label{eq:DsF_eps}
\end{equation*}
for any $s>0$ and any $\varepsilon\in\left(0,\frac{1}{2}\right)$.
As for the second derivative $\partial_{ss}\tilde{F}_{\varepsilon}$,
combining (\ref{eq:app2}), (\ref{eq:der2}), ($\mathrm{H}_{2}$)
and ($\mathrm{H}_{3}$), for every $s>0$ and for almost every $(x,t)\in\Omega_{T}$
we find 
\begin{equation}
\partial_{ss}\tilde{F}_{\varepsilon}(x,t,s) \leq C_{1} (v_{\varepsilon}(s))^{p-2} \mathds{1}_{\{ \cdot > 1\}}(s)+\frac{C_{1} C}{\varepsilon} (v_{\varepsilon}(s))^{p-1} \mathds{1}_{\{1 < \,  \cdot \,  < 1+2 \varepsilon\}}(s).\label{eq:app3}
\end{equation}
Now, using the fact $v_{\varepsilon}(s)\leq2 \varepsilon$ for $s<1+2 \varepsilon$,
we can estimate the second term on the right-hand side of (\ref{eq:app3})
as follows: 
\begin{align}
\frac{(v_{\varepsilon}(s))^{p-1}}{\varepsilon}  \mathds{1}_{\{1 < \,  \cdot \, < 1+2 \varepsilon\}}(s)  & \leq 2^{p-1}\varepsilon^{p-2} \frac{(1+s^{2})^{\frac{p-2}{2}}}{(1+s^{2})^{\frac{p-2}{2}}}  \mathds{1}_{\{1 < \, \cdot \, < 1+2 \varepsilon\}}(s) \nonumber\\
 & \leq 2^{p-1} 5^{\frac{2-p}{2}} \varepsilon^{p-2} (1+s^{2})^{\frac{p-2}{2}}.
\label{eq:app4}
\end{align}
In order to deal with the first term on the right-hand side of (\ref{eq:app3}),
we distinguish between the cases $1<s<1+2 \varepsilon$ and $s\geq1+2 \varepsilon$.
In the first case, we have $v_{\varepsilon}(s)\geq\varepsilon$ and
therefore we get 
\begin{equation}
(v_{\varepsilon}(s))^{p-2} \mathds{1}_{\{\, \cdot\,> 1\}}(s) \leq \varepsilon^{p-2} \leq 5^{\frac{2-p}{2}} \varepsilon^{p-2} (1+s^{2})^{\frac{p-2}{2}}.\label{eq:app5}
\end{equation}
If, on the other hand, $s\geq1+2 \varepsilon$, then we have $v_{\varepsilon}(s)=s-1$
due to (\ref{eq:appro1}). In this case, using that $(s-1)^{2}\geq\frac{\varepsilon^{2}}{4} (1+s^{2})$
we obtain 
\begin{equation}
(v_{\varepsilon}(s))^{p-2} \mathds{1}_{\{\,\cdot\,> 1\}}(s) = (s-1)^{p-2}\leq 2^{2-p} \varepsilon^{p-2} (1+s^{2})^{\frac{p-2}{2}}\leq 5^{\frac{2-p}{2}} \varepsilon^{p-2} (1+s^{2})^{\frac{p-2}{2}}.\label{eq:app6}
\end{equation}
Joining estimates (\ref{eq:app3})$-$(\ref{eq:app6}), for every
$s>0$ and for almost every $(x,t)\in\Omega_{T}$ we then have 
\begin{equation*}
\partial_{ss}\tilde{F}_{\varepsilon}(x,t,s) \leq c\, \varepsilon^{p-2} (1+s^{2})^{\frac{p-2}{2}},\label{eq:C3estimate}
\end{equation*}
where $c\equiv c(p,C_{1})>0$. This concludes the necessary growth
estimates of $\tilde{F}_{\varepsilon}$.

Now, in order to prove that the function $\tilde{F}_{\varepsilon}$
is indeed a good approximation of $F$, it remains to analyze the
behavior of $\tilde{F}_{\varepsilon}$ as $\varepsilon\searrow0$.
To this end, we notice that (\ref{eq:appro1}) immediately implies
\begin{equation}
\partial_{s}\tilde{F}_{\varepsilon}(x,t,s)\equiv\partial_{s}F(x,t,s)\qquad \text{for }  s\notin(1,1+2 \varepsilon).\label{eq:equaDsFeps}
\end{equation}
Furthermore, for $s\in[1,1+2 \varepsilon]$ we can estimate the difference
of these two derivatives as follows: 
\begin{align}\label{eq:lab1}
\vert\partial_{s}\tilde{F}_{\varepsilon}(x,t,s)-\partial_{s}F(x,t,s)\vert
&\leq \vert\partial_{s}\tilde{F}_{\varepsilon}(x,t,s)\vert+\vert\partial_{s}F(x,t,s)\vert\nonumber\\
&\leq 2^{p} C_{1} \varepsilon^{p-1}
\leq 2^{p} C_{1} \varepsilon^{p-1} s^{p-1}.
\end{align}
Let us explicitly observe that the last term tends to zero as $\varepsilon\searrow0$.
To ensure the convergence result of Lemma \ref{lem:Lpconv} below, we need to accelerate the rate of convergence. For this reason, for any $1<p<2$
we now define the new approximation 
\begin{equation}
F_{\varepsilon}(x,t,s):=\tilde{F}_{\varepsilon^{\frac{1}{p-1}}}(x,t,s),\qquad s\geq 0 .\label{eq:lab2}
\end{equation}
Collecting the above conclusions, we note that $F_\varepsilon$ has the following properties:
\begin{lem}\label{eq:approxin}
    For every $\varepsilon\in\left(0,2^{1-p}\right)$, almost every $(x,t)\in \Omega_T$ and every $s\in \mathbb{R}_{0}^{+}$, we have
    \begin{equation*}
    \begin{cases}
        \frac{1}{c L} (s^{p}-1) \leq F_{\varepsilon}(x,t,s) \leq c L (s^{p}+1), \\
        0\leq \partial_{s} F_{\varepsilon}(x,t,s) \leq C_{1} s^{p-1} \mathds{1}_{\{ \cdot > 1\}}(s),\\
        0\leq \partial_{ss} F_{\varepsilon}(x,t,s) \leq c \,\varepsilon^{\frac{p-2}{p-1}} (1+s^{2})^{\frac{p-2}{2}}, \\
        \vert\partial_{s} F_{\varepsilon}(x,t,s)-\partial_{s}F(x,t,s)\vert
        \leq 
        2^{p} \, C_{1} \,\varepsilon \, s^{p-1}, \\
        \partial_{s} F_{\varepsilon}(x,t,s)\equiv\partial_{s}F(x,t,s)\qquad \mathrm{if \ } s\notin \big(1,1+2 \varepsilon^{\frac{1}{p-1}} \big).
    \end{cases}
\end{equation*}
\end{lem}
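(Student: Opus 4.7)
The proof is essentially a bookkeeping exercise: all five estimates have, in substance, already been proved for the intermediate approximation $\tilde F_\varepsilon$ in the preceding paragraphs, and the lemma is obtained by performing the substitution $\varepsilon\mapsto \varepsilon^{1/(p-1)}$ prescribed by \eqref{eq:lab2}. My plan is therefore to first fix the admissible range of $\varepsilon$ and then transcribe the bounds one by one, highlighting the rescaling of the exponent.

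First, I would observe that the constraint $\varepsilon\in\bigl(0,2^{1-p}\bigr)$ is exactly what makes the rescaled parameter $\tilde\varepsilon:=\varepsilon^{1/(p-1)}$ lie in $\bigl(0,\tfrac12\bigr)$, which is precisely the range for which the estimates on $\tilde F_{\tilde\varepsilon}$ have been derived. Hence, for such $\varepsilon$, the function
\[
F_\varepsilon(x,t,s)=\tilde F_{\tilde\varepsilon}(x,t,s)=F\bigl(x,t,v_{\tilde\varepsilon}(s)+1\bigr)
\]
inherits all of the previously established properties.

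Next, I would verify the five claims in order. The first bound is a direct quotation of the growth estimate established right after \eqref{eq:app2} (valid for $s\geq 1+2\tilde\varepsilon$), combined with the convexity of $\tilde F_{\tilde\varepsilon}$ in $s$ and the fact that $\tilde F_{\tilde\varepsilon}(x,t,s)=F(x,t,\tilde\varepsilon+1)$ is bounded uniformly for $s\in[0,1+2\tilde\varepsilon]$, up to adjusting the universal constant $c$. The second bound is precisely the inequality displayed just after \eqref{eq:der2}. The third bound follows from the chain $\partial_{ss}\tilde F_{\tilde\varepsilon}(x,t,s)\le c\,\tilde\varepsilon^{p-2}(1+s^2)^{(p-2)/2}$ established in \eqref{eq:app3}–\eqref{eq:app6}; substituting $\tilde\varepsilon^{p-2}=\varepsilon^{(p-2)/(p-1)}$ yields the stated form. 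Finally, the identity in the last claim is \eqref{eq:equaDsFeps} rewritten in the new parameter.

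The only step deserving attention is the fourth claim, since this is where the rescaling \eqref{eq:lab2} is designed to pay off. Starting from \eqref{eq:lab1}, I would substitute $\tilde\varepsilon=\varepsilon^{1/(p-1)}$ to obtain
\[
\bigl|\partial_s F_\varepsilon(x,t,s)-\partial_s F(x,t,s)\bigr|
\le 2^{p}C_1\,\tilde\varepsilon^{p-1}s^{p-1}
= 2^{p}C_1\,\varepsilon\,s^{p-1}
\]
for $s\in[1,1+2\tilde\varepsilon]$; outside this interval the difference vanishes by \eqref{eq:equaDsFeps}, so the bound is trivially true. This is the whole proof: no genuine obstacle appears, the only point to keep in mind is that the rescaling turns the originally super-linear rate $\tilde\varepsilon^{p-1}$ (which is weak for $p$ close to $1$) into the linear rate $\varepsilon$, as needed later for Lemma~\ref{lem:Lpconv}.
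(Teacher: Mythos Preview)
Your proposal is correct and matches the paper's approach exactly: the paper itself presents the lemma as ``collecting the above conclusions,'' and your write-up simply makes explicit the substitution $\tilde\varepsilon=\varepsilon^{1/(p-1)}$ and the minor extension of the first bound to all $s\ge 0$. No gap and no meaningful difference in method.
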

However, notice that $2^{1-p}\geq\frac{1}{2}$ whenever $1<p\leq2$, which implies that the previous lemma holds for any $\varepsilon\in(0,1/2)$.
\subsection{Approximation of the vector field \textit{A}}\label{subsec:approx2}
With the approximation (\ref{eq:lab2}) of $F$ and Lemma \ref{eq:approxin} in mind, we can define for every $\varepsilon\in(0,1/2)$ the vector field $A_{\varepsilon}:\Omega_{T}\times\mathbb{R}^{Nn}\rightarrow\mathbb{R}^{Nn}$
by 
\[
A_{\varepsilon}(x,t,\xi):= h_{\varepsilon}(x,t,\vert\xi\vert) \xi ,
\]
where
\begin{equation}
h_{\varepsilon}(x,t,s):=\begin{cases}
\frac{\partial_{s}F_{\varepsilon}(x, t, s)}{s}+\varepsilon (1+s^{2})^{\frac{p-2}{2}} & \text{if } 1<p\leq2,\\
\frac{\partial_{s}F(x, t, s)}{s}+\varepsilon (1+s^{2})^{\frac{p-2}{2}} & \text{if }  p>2.\end{cases}\label{eq:h-fun-bis}
\end{equation}
We thus approximate the structure function $A$ by means of the vector
fields $A_{\varepsilon}$, in order to be allowed to apply some results
from the theory of singular or degenerate parabolic systems to the
weak solutions of problem (\ref{eq:CAUCHYDIR}), introduced in Section \ref{sec:uniqueness}. Therefore,
we now need to check whether $A_{\varepsilon}$ fulfills non-degenerate
growth conditions. This is what we will do hereafter.

From the growth conditions of $\partial_{s}F_{\varepsilon}$
and from the structure of the approximation, for any $p>1$ and for
any $\varepsilon\in(0,\frac{1}{2})$ we immediately obtain 
\begin{equation}
\varepsilon (1+\vert\xi\vert^{2})^{\frac{p-2}{2}} \vert\xi\vert^{2} \leq \langle A_{\varepsilon}(x,t,\xi),\xi\rangle \leq (C_{1}+\varepsilon) (1+\vert\xi\vert^{2})^{\frac{p}{2}}\label{eq:gro1}
\end{equation}
for every $\xi\in\mathbb{R}^{Nn}$ and for almost every $(x,t)\in\Omega_{T}$.
As for the spatial gradient of $A_{\varepsilon}$, by the assumption
($\mathrm{H_{4}}$) we have 
\begin{equation}
\left|D_{x}A_{\varepsilon}(x,t,\xi)\right| \leq 2^{p-1} K (1+\vert\xi\vert)^{p-1}\label{eq:spgraAeps}
\end{equation}
for every $\xi\in\mathbb{R}^{Nn}$, for every $\varepsilon\in(0,\frac{1}{2})$
and for almost every $(x,t)\in\Omega_{T}$.

Now we want to examine the structure of $D_{\xi}A_{\varepsilon}(x,t,\xi)$.
To this end, for any $\xi\in\mathbb{R}^{Nn}\setminus\{0\}$ and for
any $\varepsilon\in(0,\frac{1}{2})$ we define the bilinear form $\mathbf{\mathcal{A}}_{\varepsilon}(x,t,\xi):\mathbb{R}^{Nn^{2}}\times\mathbb{R}^{Nn^{2}}\rightarrow\mathbb{R}$
by 
\[
\mathbf{\mathcal{A}}_{\varepsilon}(x,t,\xi)(\lambda,\zeta):= h_{\varepsilon}(x,t,\left|\xi\right|) \lambda\cdot\zeta+\partial_{s}h_{\varepsilon}(x,t,\left|\xi\right|)\sum_{i,j = 1}^{N}\sum_{k,\ell,m = 1}^{n}\frac{\xi_{k}^{i} \lambda_{k m}^{i} \xi_{\ell}^{j} \zeta_{\ell m}^{j}}{\left|\xi\right|}\qquad \text{for }  \lambda,\zeta\in\mathbb{R}^{Nn^{2}},
\]
and observe that 
\begin{equation}
\sum_{i,j = 1}^{N}\sum_{k,\ell,m = 1}^{n}D_{\xi_{k}^{i}}(A_{\varepsilon})_{\ell}^{j}(x,t,\xi) \lambda_{k m}^{i} \zeta_{\ell m}^{j} = \mathbf{\mathcal{A}}_{\varepsilon}(x,t,\xi)(\lambda,\zeta).\label{eq:bil_form}
\end{equation}

The next lemma provides the relevant ellipticity and boundedness properties
of the bilinear form $\mathbf{\mathcal{A}}_{\varepsilon}(x,t,\xi)$. 

\begin{lem}\label{lem:lemma app1}
Let $1<p<\infty$ and $\varepsilon\in(0,\frac{1}{2})$.
Then, there exists a positive constant $C\equiv C(p,C_{1},\varepsilon)$
such that, for any $\xi\in\mathbb{R}^{Nn}\setminus\{0\}$ and any
$\lambda\in\mathbb{R}^{Nn^{2}}$, we have 
\begin{equation}
\varepsilon\min\{1,p-1\}(1+\vert\xi\vert^{2})^{\frac{p-4}{2}}\vert\xi\vert^{2}\vert\lambda\vert^{2} \leq \mathbf{\mathcal{A}}_{\varepsilon}(x,t,\xi)(\lambda,\lambda)  \leq C (1+\left|\xi\right|^{2})^{\frac{p-2}{2}}\left|\lambda\right|^{2}.\label{eq:lemmabili_a}
\end{equation}
Moreover, for $\delta>4 \varepsilon^{\frac{1}{p-1}}$ and $\left|\xi\right|\geq1+\frac{\delta}{2}$
we get 
\begin{equation}
\frac{1}{c} \left|\xi\right|^{p-2}\left|\lambda\right|^{2} \leq \mathbf{\mathcal{A}}_{\varepsilon}(x,t,\xi)(\lambda,\lambda) \leq c \left|\xi\right|^{p-2}\left|\lambda\right|^{2},\label{eq:lemmabili}
\end{equation}
where $c\equiv c(p,C_{1},\delta)>1$. 
\end{lem}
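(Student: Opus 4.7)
The plan is to reduce everything to a pair of scalar identities and estimates for the function $h_\varepsilon$ and its derivative.

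First, using the definition of $\mathcal{A}_\varepsilon$ and setting $\lambda = \zeta$, I would write
\[
\mathcal{A}_\varepsilon(x,t,\xi)(\lambda,\lambda) = h_\varepsilon(|\xi|)|\lambda|^2 + \frac{\partial_s h_\varepsilon(|\xi|)}{|\xi|} \sum_{m=1}^{n} \Bigg( \sum_{i=1}^{N}\sum_{k=1}^{n} \xi_k^i \lambda_{km}^i \Bigg)^2.
\]
By Cauchy--Schwarz the sum-of-squares term lies between $0$ and $|\xi|^2|\lambda|^2$, so
\[
\min\{0,|\xi|\partial_s h_\varepsilon(|\xi|)\}|\lambda|^2 \le \mathcal{A}_\varepsilon(\xi)(\lambda,\lambda) - h_\varepsilon(|\xi|)|\lambda|^2 \le \max\{0,|\xi|\partial_s h_\varepsilon(|\xi|)\}|\lambda|^2.
\]
Hence everything reduces to bounding the two scalar quantities $h_\varepsilon(s)$ and $h_\varepsilon(s)+s\partial_s h_\varepsilon(s)$ above and below.

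A direct differentiation of $h_\varepsilon$ (cf.~\eqref{eq:h-fun-bis}) gives the key identity
\[
h_\varepsilon(s) + s\,\partial_s h_\varepsilon(s) = \partial_{ss}F_\varepsilon(x,t,s) + \varepsilon(1+s^2)^{\frac{p-4}{2}}\bigl[1 + (p-1)s^2\bigr],
\]
for $1<p\le 2$ (and the analogue with $F$ in place of $F_\varepsilon$ for $p>2$). Combining with the elementary inequality $1+(p-1)s^2\ge \min\{1,p-1\}(1+s^2)$, valid for every $p\ge 1$, and with the nonnegativity of $\partial_{ss}F_\varepsilon$, I obtain
\[
h_\varepsilon(s)+s\,\partial_s h_\varepsilon(s) \;\ge\; \varepsilon\min\{1,p-1\}(1+s^2)^{\frac{p-2}{2}}.
\]
Together with $h_\varepsilon(s)\ge \varepsilon(1+s^2)^{\frac{p-2}{2}}$ (which follows directly from \eqref{eq:h-fun-bis}), a case split on the sign of $\partial_s h_\varepsilon(|\xi|)$ yields in both cases
\[
\mathcal{A}_\varepsilon(\xi)(\lambda,\lambda) \ge \varepsilon\min\{1,p-1\}(1+|\xi|^2)^{\frac{p-2}{2}}|\lambda|^2,
\]
which is stronger than the stated lower bound in \eqref{eq:lemmabili_a}, since $(1+|\xi|^2)^{(p-4)/2}|\xi|^2\le (1+|\xi|^2)^{(p-2)/2}$. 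For the upper bound in \eqref{eq:lemmabili_a} I bound $\mathcal{A}_\varepsilon(\xi)(\lambda,\lambda)\le [h_\varepsilon(|\xi|)+|\xi|(\partial_s h_\varepsilon(|\xi|))_+]|\lambda|^2$, then use the third bullet of Lemma~\ref{eq:approxin} (or $(\mathrm{H_3})$ if $p>2$) to estimate $\partial_{ss}F_\varepsilon$, and the second bullet of Lemma~\ref{eq:approxin} (or $(\mathrm{H_2})$) combined with $s^{p-2}\lesssim (1+s^2)^{(p-2)/2}$ to estimate $\partial_s F_\varepsilon(s)/s$, arriving at $C(p,C_1,\varepsilon)(1+|\xi|^2)^{(p-2)/2}|\lambda|^2$.

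For the non-degenerate bound \eqref{eq:lemmabili}, the hypothesis $|\xi|\ge 1+\delta/2 > 1+2\varepsilon^{1/(p-1)}$ places $|\xi|$ in the range where $F_\varepsilon\equiv F$ (last line of Lemma~\ref{eq:approxin}; for $p>2$ this is automatic). The two-sided bounds $(\mathrm{H_2})$ and $(\mathrm{H_3})$ apply, and because $|\xi|-1\ge \delta/2$ the comparison $|\xi|-1\simeq_\delta |\xi|$ together with $(1+|\xi|^2)^{(p-2)/2}\simeq |\xi|^{p-2}$ gives
\[
h_\varepsilon(|\xi|)\simeq |\xi|^{p-2},\qquad h_\varepsilon(|\xi|)+|\xi|\,\partial_s h_\varepsilon(|\xi|)\simeq |\xi|^{p-2},
\]
with constants depending only on $p$, $C_1$, $\delta$. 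Feeding these into the same case split used above gives \eqref{eq:lemmabili}.

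The main obstacle I expect is handling the sign of $\partial_s h_\varepsilon(|\xi|)$, which need not be nonnegative in the subquadratic range and obliges one to pass, via Cauchy--Schwarz, to the combined quantity $h_\varepsilon+s\partial_s h_\varepsilon$; once the identity above is recorded, the rest is bookkeeping of constants, with particular attention to the $\varepsilon$-dependence coming from Lemma~\ref{eq:approxin}.
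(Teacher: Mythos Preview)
Your proposal is correct and follows essentially the same approach as the paper's proof: both split on the sign of $\partial_s h_\varepsilon(|\xi|)$, use Cauchy--Schwarz on the rank-one term of the bilinear form, and then feed in the growth bounds on $\partial_s F_\varepsilon$, $\partial_{ss}F_\varepsilon$ (via Lemma~\ref{eq:approxin} or $(\mathrm{H_2})$--$(\mathrm{H_3})$). Your organization around the scalar identity $h_\varepsilon(s)+s\,\partial_s h_\varepsilon(s)=\partial_{ss}F_\varepsilon(s)+\varepsilon(1+s^2)^{(p-4)/2}[1+(p-1)s^2]$ is slightly cleaner and indeed yields the marginally stronger lower bound $\varepsilon\min\{1,p-1\}(1+|\xi|^2)^{(p-2)/2}|\lambda|^2$, whereas the paper drops the extra $\varepsilon(1+|\xi|^2)^{(p-4)/2}$ piece and records only $(p-1)\varepsilon(1+|\xi|^2)^{(p-4)/2}|\xi|^2|\lambda|^2$.
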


\begin{proof}[\textbf{{Proof}}]
From (\ref{eq:h-fun-bis}) it
follows that 
\[
\partial_{s}h_{\varepsilon}(x,t,s)=\begin{cases}
\frac{\partial_{ss}F_{\varepsilon}(x, t, s)}{s}-\frac{\partial_{s}F_{\varepsilon}(x, t, s)}{s^{2}}+(p-2) \varepsilon s (1+s^{2})^{\frac{p-4}{2}} & \mathrm{if \ } 1<p\leq2\\
\frac{\partial_{ss}F(x, t, s)}{s}-\frac{\partial_{s}F(x, t, s)}{s^{2}}+(p-2) \varepsilon s (1+s^{2})^{\frac{p-4}{2}} & \mathrm{if \ } p>2.
\end{cases}
\]
In order to prove the assertion, we distinguish between two cases.

When $\partial_{s}h_{\varepsilon}(x,t,\left|\xi\right|)<0$,
from Lemma \ref{eq:approxin} and the growth assumption ($\mathrm{H_{2}}$)
we obtain 
\begin{align*}
\mathbf{\mathcal{A}}_{\varepsilon}(x,t,\xi)(\lambda,\lambda) & \leq h_{\varepsilon}(x,t,\left|\xi\right|)\left|\lambda\right|^{2}\\
 & \leq\begin{cases}
C_{1}\left|\xi\right|^{p-2}\mathds{1}_{\{\left|\xi\right| \geq 1\}}\left|\lambda\right|^{2}+\varepsilon (1+\left|\xi\right|^{2})^{\frac{p-2}{2}}\left|\lambda\right|^{2} & \mathrm{for \ } 1<p\leq 2\\
(C_{1}+\varepsilon) (1+\left|\xi\right|^{2})^{\frac{p-2}{2}}\left|\lambda\right|^{2} & \mathrm{for \ } p>2
\end{cases}\\
 & \leq\begin{cases}
(2^{\frac{2-p}{2}}C_{1}+1) (1+\left|\xi\right|^{2})^{\frac{p-2}{2}}\left|\lambda\right|^{2} & \mathrm{for \ } 1<p\leq 2\\
(C_{1}+1) (1+\left|\xi\right|^{2})^{\frac{p-2}{2}}\left|\lambda\right|^{2} & \mathrm{for \ } p > 2,
\end{cases}
\end{align*}
where we have applied the inequality $\vert\xi\vert^{2} \mathds{1}_{\{\left|\xi\right| \geq 1\}}\geq\frac{1}{2} (1+\vert\xi\vert^{2}) \mathds{1}_{\{\left|\xi\right| \geq 1\}}$
in the case $1<p\leq 2$ and 
\[
\frac{(s-1)_{+}^{p-1}}{s} \leq s^{p-2} \leq (1+s^{2})^{\frac{p-2}{2}}\qquad \mathrm{for} \ s>0 \ \mathrm{and} \ p>2.
\]
This proves the upper bound in (\ref{eq:lemmabili_a}), and the one
in (\ref{eq:lemmabili}) is an immediate consequence. Moreover, using
the Cauchy-Schwarz inequality, we get 
\begin{align*}
\mathbf{\mathcal{A}}_{\varepsilon}(x,t,\xi)(\lambda,\lambda) & \geq\begin{cases}
\partial_{ss}F_{\varepsilon}(x,t,\left|\xi\right|)\left|\lambda\right|^{2}+(p-1) \varepsilon (1+\left|\xi\right|^{2})^{\frac{p-4}{2}}\left|\xi\right|^{2}\left|\lambda\right|^{2} & \mathrm{for \ }1<p\leq 2,\\
\partial_{ss}F(x,t,\left|\xi\right|)\left|\lambda\right|^{2}+(p-1) \varepsilon (1+\left|\xi\right|^{2})^{\frac{p-4}{2}}\left|\xi\right|^{2}\left|\lambda\right|^{2} & \mathrm{for \ }  p>2.\end{cases}
\end{align*}
Since $\partial_{ss}F_{\varepsilon}(x,t,\vert\xi\vert)$, $\partial_{ss}F(x,t,\vert\xi\vert)\geq0$,
we find that 
\[
\mathbf{\mathcal{A}_\varepsilon}(x,t,\xi)(\lambda,\lambda) \geq (p-1) \varepsilon (1+\vert\xi\vert^{2})^{\frac{p-4}{2}}\vert\xi\vert^{2}\vert\lambda\vert^{2}\qquad \text{for every }p>1,
\]
thus proving the lower bound in (\ref{eq:lemmabili_a}). To obtain
the lower bound in (\ref{eq:lemmabili}), we observe that $F_{\varepsilon}(x,t,\left|\xi\right|)=F(x,t,\left|\xi\right|)$
for $\left|\xi\right|\geq1+\frac{\delta}{2}$, so that in view of
($\mathrm{H_{3}}$) we have 
\[
\mathbf{\mathcal{A}}_{\varepsilon}(x,t,\xi)(\lambda,\lambda) \geq \partial_{ss}F(x,t,\vert\xi\vert)\left|\lambda\right|^{2} \geq \frac{1}{C_{1}} (\vert\xi\vert-1)^{p-2}\left|\lambda\right|^{2} \geq \frac{1}{c} \left|\xi\right|^{p-2}\left|\lambda\right|^{2}
\]
for some positive constant $c\equiv c(p,C_{1},\delta)$.

In the case $\partial_{s}h_{\varepsilon}(x,t,\left|\xi\right|)\geq0$,
applying the Cauchy-Schwarz inequality again, from Lemma \ref{eq:approxin}
and the growth condition ($\mathrm{H_{3}}$) we get 
\begin{align*}
\mathbf{\mathcal{A}}_{\varepsilon}(x,t,\xi)(\lambda,\lambda) & \leq\begin{cases}
\partial_{ss}F_{\varepsilon}(x,t,\left|\xi\right|)\left|\lambda\right|^{2}+(p-1) \varepsilon (1+\left|\xi\right|^{2})^{\frac{p-4}{2}}\left|\xi\right|^{2}\left|\lambda\right|^{2} & \mathrm{for \ }1<p\leq2\\
\partial_{ss}F(x,t,\left|\xi\right|)\left|\lambda\right|^{2}+(p-1) \varepsilon (1+\left|\xi\right|^{2})^{\frac{p-4}{2}}\left|\xi\right|^{2}\left|\lambda\right|^{2} & \mathrm{for \ } p>2\end{cases}\\
 & \leq\begin{cases}
\left(p-1+C \varepsilon^{\frac{p-2}{p-1}}\right) (1+\left|\xi\right|^{2})^{\frac{p-2}{2}}\left|\lambda\right|^{2} & \mathrm{for \ } 1<p\leq 2\\
(p-1+C_{1}) (1+\left|\xi\right|^{2})^{\frac{p-2}{2}}\left|\lambda\right|^{2} & \mathrm{for \ } p > 2,
\end{cases}
\end{align*}
where we have used the inequality $(s-1)_{+}^{p-2}\leq(s^{2}+1)^{\frac{p-2}{2}}$,
which holds for every $s\geq0$ and every $p>2$. To obtain the lower
bound in (\ref{eq:lemmabili_a}), we neglect the term $\partial_{s}h_{\varepsilon}(x,t,\left|\xi\right|)$
and use the fact that $\partial_{s}F_{\varepsilon}(x,t,\vert\xi\vert)$,
$\partial_{s}F(x,t,\vert\xi\vert)\geq0$. Thus we have 
\[
    \mathbf{\mathcal{A}}_{\varepsilon}(x,t,\xi)(\lambda,\lambda)
    \geq
    h_{\varepsilon}(x,t,\left|\xi\right|)\left|\lambda\right|^{2}
    \geq
    \varepsilon (1+\left|\xi\right|^{2})^{\frac{p-4}{2}}\left|\xi\right|^{2}\left|\lambda\right|^{2}\qquad \mathrm{for \ every \ }p>1.
\]
Finally, to establish the bounds in (\ref{eq:lemmabili}), one can
argue as above. This time, for $\left|\xi\right|\geq1+\frac{\delta}{2}$
we are allowed to use the growth assumptions ($\mathrm{H_{2}}$) and
($\mathrm{H_{3}}$) also in the case $1<p\leq 2$, since $F_{\varepsilon}(x,t,\left|\xi\right|)=F(x,t,\left|\xi\right|)$.
Furthermore, we can estimate the term $\varepsilon (1+\left|\xi\right|^{2})^{\frac{p-4}{2}}\vert\xi\vert^{2}$
by zero from below and make use of $\varepsilon (1+\left|\xi\right|^{2})^{\frac{p-4}{2}} \vert\xi\vert^{2}\leq\varepsilon (1+\left|\xi\right|^{2})^{\frac{p-2}{2}}\leq2^{\frac{(p-2)_{+}}{2}} \vert\xi\vert^{p-2}$.
After doing this, we get the desired estimates by means of the following
inequalities, which hold for $\left|\xi\right|\geq1+\frac{\delta}{2}$
: 
\begin{equation}
\Big(\frac{\delta}{2+\delta}\Big)^{p-1}\left|\xi\right|^{p-2} \leq \frac{(\left|\xi\right|-1)_{+}^{p-1}}{\left|\xi\right|} \leq \left|\xi\right|^{p-2}\label{eq:approxi1}
\end{equation}
and
\begin{equation}\label{eq:approxi2}
    \min\bigg\{1, \Big(\frac{\delta}{2+\delta}\Big)^{p-2}\bigg\} |\xi|^{p-2}
    \le 
    (|\xi|-1)_{+}^{p-2}
    \le 
    \max\bigg\{1, \Big(\frac{\delta}{2+\delta}\Big)^{p-2}\bigg\} |\xi|^{p-2}.
\end{equation}
\end{proof}

From the previous lemma, it follows that
the bilinear form $(\lambda,\zeta)\mapsto\mathbf{\mathcal{A}}_{\varepsilon}(x,t,\xi)(\lambda,\zeta)$
defines a scalar product on the Euclidean space $\mathbb{R}^{Nn^{2}}$.
As for the modulus of $\mathbf{\mathcal{A}}_{\varepsilon}$, we get
the following result: 

\begin{lem}\label{lem:lemma app2}
Let $1<p<\infty$ and $\varepsilon\in(0,\frac{1}{2})$.
Then, there exists a positive constant $C\equiv C(p,C_{1},\varepsilon)$
such that, for any $\xi\in\mathbb{R}^{Nn}\setminus\{0\}$ and any
$\lambda,\zeta\in\mathbb{R}^{Nn^{2}}$, we have 
\[
\left|\mathbf{\mathcal{A}}_{\varepsilon}(x,t,\xi)(\lambda,\zeta)\right| \leq C (1+\left|\xi\right|^{2})^{\frac{p-2}{2}}\left|\lambda\right|\left|\zeta\right|.
\]
Moreover, for $\delta>4 \varepsilon^{\frac{1}{p-1}}$ and $\left|\xi\right|\geq1+\frac{\delta}{2}$
we get 
\[
\left|\mathbf{\mathcal{A}}_{\varepsilon}(x,t,\xi)(\lambda,\zeta)\right| \leq C_{2} \left|\xi\right|^{p-2}\left|\lambda\right|\left|\zeta\right|,
\]
where $C_{2}\equiv C_{2}(\delta,p,C_{1})>0$. 
\end{lem}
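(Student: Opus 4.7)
The plan is to mimic the argument used in Lemma~\ref{lem:lemma app1} to prove the upper bound on $\mathcal{A}_{\varepsilon}(x,t,\xi)(\lambda,\lambda)$, but applied to the bilinear pairing $(\lambda,\zeta)$ rather than to $(\lambda,\lambda)$. The key observation is that the Cauchy-Schwarz inequality used there for the ``crossed'' term is not specific to the diagonal case and therefore extends to the bilinear setting.

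First, I would handle the second term in the definition of $\mathcal{A}_{\varepsilon}(x,t,\xi)(\lambda,\zeta)$ by a double application of Cauchy-Schwarz. Setting $A_{m}:=\sum_{i,k}\xi_{k}^{i}\lambda_{km}^{i}$ and $B_{m}:=\sum_{j,\ell}\xi_{\ell}^{j}\zeta_{\ell m}^{j}$, a first application in the indices $(i,k)$ yields $|A_{m}|\leq |\xi|\,(\sum_{i,k}(\lambda_{km}^{i})^{2})^{1/2}$ and similarly for $B_{m}$; then a second application in the index $m$ gives
\[
\left|\sum_{m}A_{m}B_{m}\right|\leq |\xi|^{2}\,|\lambda|\,|\zeta|.
\]
Combining this with the trivial bound $|h_{\varepsilon}(|\xi|)\,\lambda\cdot\zeta|\leq h_{\varepsilon}(|\xi|)\,|\lambda|\,|\zeta|$, I obtain
\[
|\mathcal{A}_{\varepsilon}(x,t,\xi)(\lambda,\zeta)|\leq \bigl(h_{\varepsilon}(|\xi|)+|\xi|\,|\partial_{s}h_{\varepsilon}(|\xi|)|\bigr)\,|\lambda|\,|\zeta|,
\]
which reduces the problem to estimating the scalar quantity $h_{\varepsilon}(s)+s\,|\partial_{s}h_{\varepsilon}(s)|$.

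Next I would bound this scalar quantity. Since
\[
s\,\partial_{s}h_{\varepsilon}(s)=\partial_{ss}F_{\varepsilon}(x,t,s)-\frac{\partial_{s}F_{\varepsilon}(x,t,s)}{s}+(p-2)\,\varepsilon\,s^{2}\,(1+s^{2})^{\frac{p-4}{2}}
\]
(and similarly with $F$ in place of $F_{\varepsilon}$ if $p>2$), the triangle inequality and the growth estimates of Lemma~\ref{eq:approxin} (together with $(\mathrm{H}_{2})$ and $(\mathrm{H}_{3})$ in the case $p>2$) give
\[
s\,|\partial_{s}h_{\varepsilon}(s)|\leq \partial_{ss}F_{\varepsilon}(x,t,s)+\frac{\partial_{s}F_{\varepsilon}(x,t,s)}{s}+|p-2|\,\varepsilon\,(1+s^{2})^{\frac{p-2}{2}}\leq C\,(1+s^{2})^{\frac{p-2}{2}},
\]
with $C\equiv C(p,C_{1},\varepsilon)$, where for the first two summands I would argue exactly as in Lemma~\ref{lem:lemma app1} (distinguishing $1<p\leq 2$ and $p>2$ and using that $\varepsilon\,s^{2}\,(1+s^{2})^{\frac{p-4}{2}}\leq\varepsilon\,(1+s^{2})^{\frac{p-2}{2}}$). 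An identical computation controls $h_{\varepsilon}(s)$ by the same quantity, so that the global bound $|\mathcal{A}_{\varepsilon}(x,t,\xi)(\lambda,\zeta)|\leq C\,(1+|\xi|^{2})^{\frac{p-2}{2}}\,|\lambda|\,|\zeta|$ follows.

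For the improved estimate in the regime $|\xi|\geq 1+\tfrac{\delta}{2}$ with $\delta>4\varepsilon^{\frac{1}{p-1}}$, I would use that $F_{\varepsilon}(x,t,|\xi|)=F(x,t,|\xi|)$ there (by (\ref{eq:appro1}) and (\ref{eq:lab2})), so that both $h_{\varepsilon}$ and $\partial_{s}h_{\varepsilon}$ are expressible through the original $F$; hypotheses $(\mathrm{H}_{2})$, $(\mathrm{H}_{3})$ then give $\partial_{ss}F\leq C_{1}(|\xi|-1)^{p-2}$ and $\partial_{s}F/|\xi|\leq C_{1}(|\xi|-1)^{p-1}/|\xi|$, and finally the elementary inequalities (\ref{eq:approxi1})--(\ref{eq:approxi2}) (together with $\varepsilon(1+|\xi|^{2})^{\frac{p-2}{2}}\leq 2^{\frac{(p-2)_{+}}{2}}|\xi|^{p-2}$) convert these into the clean power $|\xi|^{p-2}$ with a constant $C_{2}\equiv C_{2}(\delta,p,C_{1})$. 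The main obstacle is purely bookkeeping: carefully tracking the different expressions for $h_{\varepsilon}$ and $\partial_{s}h_{\varepsilon}$ in the sub- and super-quadratic cases and verifying that each individual summand can be absorbed into the desired right-hand side; this, however, is already essentially contained in the proof of Lemma~\ref{lem:lemma app1}, so no new idea is required.
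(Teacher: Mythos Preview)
Your proposal is correct and follows essentially the same route as the paper's own proof: both arguments reduce to the scalar bound $|\mathcal{A}_{\varepsilon}(x,t,\xi)(\lambda,\zeta)|\leq\bigl(|h_{\varepsilon}(|\xi|)|+|\xi|\,|\partial_{s}h_{\varepsilon}(|\xi|)|\bigr)|\lambda||\zeta|$ via Cauchy--Schwarz, and then control this scalar quantity case-by-case ($1<p\le 2$ versus $p>2$) using Lemma~\ref{eq:approxin} and $(\mathrm{H}_2)$--$(\mathrm{H}_3)$, with the improved estimate for $|\xi|\ge 1+\tfrac{\delta}{2}$ obtained from $F_{\varepsilon}\equiv F$ and inequalities~(\ref{eq:approxi1})--(\ref{eq:approxi2}). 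The only cosmetic difference is that you spell out the double Cauchy--Schwarz step explicitly, whereas the paper states the resulting scalar bound directly.
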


\begin{proof}[\textbf{{Proof}}]
For every $\xi\in\mathbb{R}^{Nn}\setminus\{0\}$
and every $\lambda,\zeta\in\mathbb{R}^{Nn^{2}}$, we obtain 
\[
\left|\mathbf{\mathcal{A}}_{\varepsilon}(x,t,\xi)(\lambda,\zeta)\right| \leq \bigg[\left|h_{\varepsilon}(x,t,\left|\xi\right|)\right|+\left|\partial_{s}h_{\varepsilon}(x,t,\left|\xi\right|)\right|\left|\xi\right|\bigg]\left|\lambda\right|\left|\zeta\right|.
\]
Now, by ($\mathrm{H_{2}}$) and ($\mathrm{H_{3}}$), in the case $p>2$
we get 
\begin{align}
\left|\mathbf{\mathcal{A}}_{\varepsilon}(x,t,\xi)(\lambda,\zeta)\right|  & \leq \left(2  \frac{\partial_{s}F(x,t,\vert\xi\vert)}{\vert\xi\vert}+\partial_{ss}F(x,t,\vert\xi\vert)+\varepsilon (p-1) (1+\vert\xi\vert^{2})^{\frac{p-2}{2}}\right)\left|\lambda\right|\left|\zeta\right|\nonumber\\
 & \leq \left(2 C_{1} \frac{(\vert\xi\vert-1)_{+}^{p-1}}{\vert\xi\vert}+C_{1} (\vert\xi\vert-1)_{+}^{p-2}+\varepsilon (p-1) (1+\vert\xi\vert^{2})^{\frac{p-2}{2}}\right)\left|\lambda\right|\left|\zeta\right|\nonumber\\
 & \leq (3 C_{1}+p-1) (1+\vert\xi\vert^{2})^{\frac{p-2}{2}} \left|\lambda\right|\left|\zeta\right|.
\label{eq:estapprox}
\end{align}
In the case $1<p\leq2$, we use the estimates from Lemma \ref{eq:approxin} to find that 
\begin{align*}
\left|\mathbf{\mathcal{A}}_{\varepsilon}(x,t,\xi)(\lambda,\zeta)\right|  & \leq \left(2  \frac{\partial_{s}F_{\varepsilon}(x,t,\vert\xi\vert)}{\vert\xi\vert}+\partial_{ss}F_{\varepsilon}(x,t,\vert\xi\vert)+\varepsilon (p-1) (1+\vert\xi\vert^{2})^{\frac{p-2}{2}}\right)\left|\lambda\right|\left|\zeta\right|\\
 & \leq \left(2 C_{1} \vert\xi\vert^{p-2} \mathds{1}_{\{ |\xi| > 1\}}(\vert\xi\vert)+\left(c  \varepsilon^{\frac{p-2}{p-1}}+(p-1) \varepsilon\right) (1+\vert\xi\vert^{2})^{\frac{p-2}{2}}\right)\left|\lambda\right|\left|\zeta\right|\\
 & \leq \left(2^{\frac{4-p}{2}} C_{1}+c  \varepsilon^{\frac{p-2}{p-1}}+(p-1) \varepsilon\right) (1+\vert\xi\vert^{2})^{\frac{p-2}{2}} \left|\lambda\right|\left|\zeta\right|.
\end{align*}
We thus obtain the first conclusion of this lemma.

Finally, due to equality (\ref{eq:equaDsFeps}),
if $\left|\xi\right|\geq1+\frac{\delta}{2}$ we only need to use the
assumptions ($\mathrm{H_{2}}$) and ($\mathrm{H_{3}}$) together with
the inequalities (\ref{eq:approxi1}) and (\ref{eq:approxi2}) to
obtain from \eqref{eq:estapprox} 
\begin{align*}
\left|\mathbf{\mathcal{A}}_{\varepsilon}(x,t,\xi)(\lambda,\zeta)\right|  & \leq \left(2 C_{1} \frac{(\vert\xi\vert-1)_{+}^{p-1}}{\vert\xi\vert}+C_{1} (\vert\xi\vert-1)_{+}^{p-2}+\varepsilon (p-1) (1+\vert\xi\vert^{2})^{\frac{p-2}{2}}\right)\left|\lambda\right|\left|\zeta\right|\\
 & \leq \begin{cases}
\left(2 C_{1}+C_{1}\left(\frac{\delta}{2+\delta}\right)^{p-2}+p-1\right)\left|\xi\right|^{p-2}\left|\lambda\right|\left|\zeta\right| & \mathrm{for \ }1<p\leq2\\
\left(3 C_{1}+(p-1) 2^{\frac{p-2}{2}}\right)\left|\xi\right|^{p-2}\left|\lambda\right|\left|\zeta\right| & \mathrm{for \ } p>2. \end{cases}
\end{align*}
These inequalities conclude the proof.
\end{proof}

\subsection{Algebraic inequalities}\label{subsec:Algebraic}

In this section, we gather some relevant
algebraic inequalities that will be needed later on. We start with an elementary assertion, which will be used in the Moser iteration.
\begin{lem}\label{lem:lemit}
    Let $A>1$, $\kappa>1$, $0\leq\alpha<1$, $C,c>0$ and $i\in \mathbb{N}$. Furthermore, let $\{ \beta_j \}_{j\in\mathbb{N}_0}$ satisfy 
    \begin{equation*}
        \beta_j \geq C \kappa^{j}
    \end{equation*}
    for each $j\in \mathbb{N}_0$. Then we have
    \begin{equation}\label{eqit1}
        \prod\limits_{j=0}^{i} A^{\frac{\kappa^{i-(1-\alpha)j}}{\beta_{i+1}}} \leq A^{\frac{1}{C \kappa (1-\kappa^{\alpha-1})}}
    \end{equation}
    and
    \begin{equation}\label{eqit2}
        \prod\limits_{j=0}^{i} A^{j c \,\frac{\kappa^{i+1-j}}{\beta_{i+1} }} \leq A^{\frac{c \kappa}{C (1-\kappa)^2}}.
    \end{equation}
\end{lem}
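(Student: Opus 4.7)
The assertions are purely about comparing exponents of $A>1$, so the plan is to take logarithms: since $\log A>0$, inequality \eqref{eqit1} reduces to bounding the sum
$$S_1 := \sum_{j=0}^{i} \frac{\kappa^{i-(1-\alpha)j}}{\beta_{i+1}}$$
by $1/[C\kappa(1-\kappa^{\alpha-1})]$, and \eqref{eqit2} reduces to bounding
$$S_2 := \sum_{j=0}^{i} j c\,\frac{\kappa^{i+1-j}}{\beta_{i+1}}$$
by $c\kappa/[C(1-\kappa)^2]$. In both cases the first step is to use the hypothesis $\beta_{i+1}\ge C\kappa^{i+1}$ to remove the dependence on $\beta_{i+1}$.

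For $S_1$, after inserting $\beta_{i+1}\ge C\kappa^{i+1}$, a cancellation of $\kappa^i$ against $\kappa^{i+1}$ turns the sum into $(C\kappa)^{-1}\sum_{j=0}^{i}\kappa^{-(1-\alpha)j}$. This is a partial geometric series with common ratio $\kappa^{\alpha-1}$, which lies in $(0,1)$ because $\alpha<1$ and $\kappa>1$. Extending the sum to $j=\infty$ gives an upper bound $\frac{1}{1-\kappa^{\alpha-1}}$, and multiplying by $(C\kappa)^{-1}$ yields exactly the constant claimed in \eqref{eqit1}.

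For $S_2$, the same substitution gives $S_2\le\frac{c}{C}\sum_{j=0}^{i}j\kappa^{-j}$. The second step is to recognize the classical identity $\sum_{j=0}^{\infty}j x^{j}=\frac{x}{(1-x)^2}$ with $x=\kappa^{-1}\in(0,1)$; evaluating at this value and simplifying produces $\frac{\kappa}{(\kappa-1)^2}=\frac{\kappa}{(1-\kappa)^2}$, which after multiplication by $c/C$ is precisely the right-hand side of \eqref{eqit2}. Re-exponentiating with base $A>1$ gives the two inequalities.

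There is essentially no conceptual obstacle here; the only point to be careful about is the direction of the monotonicity when passing from exponents back to $A^{(\cdot)}$ — the hypothesis $A>1$ is what makes the inequality $S_i\le(\text{RHS exponent})$ translate into the stated product bound, and the hypotheses $\kappa>1$ and $\alpha<1$ are exactly what guarantee the geometric series in each case converges and admits the closed-form bound used above.
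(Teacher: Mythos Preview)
Your proof is correct and follows the same approach as the paper: use $\beta_{i+1}\ge C\kappa^{i+1}$ to reduce the exponent to a partial geometric-type sum, then bound by the full series. The paper only writes out the argument for \eqref{eqit1} and defers \eqref{eqit2} to a reference, whereas you spell out both; your treatment of \eqref{eqit2} via $\sum j x^j = x/(1-x)^2$ is exactly the intended computation.
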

\begin{proof}[\textbf{{Proof}}]
    The proof of the second inequality is similar to the one of Lemma 2.3. in \cite{bogelein2022boundary}. Thus we only prove the first inequality:
    \begin{align*}
        \prod\limits_{j=0}^{i}  A^{\frac{\kappa^{i-(1-\alpha)j}}{\beta_{i+1}}} 
        & \leq
        A^{\frac{1}{C\kappa} \sum\limits_{j=0}^i \kappa^{(\alpha-1)j} }
        \leq
        A^{\frac{1}{C\kappa} \sum\limits_{j=0}^\infty \kappa^{(\alpha-1)j} }
        = 
        A^{\frac{1}{C \kappa (1-\kappa^{\alpha-1})}}
    \end{align*}
\end{proof}

For $\delta\in(0,1]$ we define the auxiliary function $G_{\delta}:\mathbb{R}^{k}\rightarrow\mathbb{R}^{k}$,
$k\in\mathbb{N}$, as follows 
\[
G_{\delta}(\xi):=\begin{cases}
(\left|\xi\right|-1-\delta)_{+} \frac{\xi}{\left|\xi\right|} &\mathrm{if \ } \xi\in\mathbb{R}^{k}\setminus\{0\},\\
0 & \mathrm{if \ }\xi=0.
\end{cases}
\]

The following two lemmas are concerned with auxiliary estimates for the functions $A_{\varepsilon}$ and $G_{\delta}$ defined above.

\begin{lem}\label{lem:lem3}
Let $1<p<\infty$, $\delta>0$ and $\varepsilon\in(0,\frac{1}{2})$.
Then, there exists a positive constant $C\equiv C(p,C_{1},\delta)$
such that for every $\xi,\tilde{\xi}\in\mathbb{R}^{Nn}$ with $\left|\xi\right|>1+\frac{\delta}{2}$
we have 
\[
\langle A_{\varepsilon}(x,t,\xi)-A_{\varepsilon}(x,t,\tilde{\xi}),\xi-\tilde{\xi}\rangle \geq C\,  \frac{(\left|\xi\right|-1-\frac{\delta}{2})^{p}}{\left|\xi\right|(\left|\xi\right|+\vert\tilde{\xi}\vert)}  \vert\xi-\tilde{\xi}\vert^{2}.
\]
\end{lem}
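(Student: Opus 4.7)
The starting point will be the integral representation
\[
\langle A_\varepsilon(x,t,\xi)-A_\varepsilon(x,t,\tilde\xi),\xi-\tilde\xi\rangle=\int_0^1 \mathcal{A}_\varepsilon(x,t,\xi_\tau)(\xi-\tilde\xi,\xi-\tilde\xi)\,d\tau,\qquad \xi_\tau:=\tilde\xi+\tau(\xi-\tilde\xi),
\]
combined with the ellipticity of $\mathcal{A}_\varepsilon$ established in Lemma \ref{lem:lemma app1}. The integrand is nonnegative thanks to the lower bound in \eqref{eq:lemmabili_a}, and the identity continues to make sense even if the segment passes through $0$. To obtain a quantitative lower bound of the required form, I would split into two cases according to the relative sizes of $|\xi-\tilde\xi|$ and $|\xi|-1-\delta/2$.

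\textbf{Case 1: $|\xi-\tilde\xi|\le\tfrac{1}{2}(|\xi|-1-\delta/2)$.} The triangle inequality yields $|\xi_\tau|\ge\tfrac{1}{2}(|\xi|+1+\delta/2)>1+\delta/2$ for every $\tau\in[0,1]$, so the entire segment lies in the non-degenerate region; in particular $|\tilde\xi|\in[|\xi|/2,\,3|\xi|/2]$ and $|\xi_\tau|$ is uniformly comparable to $|\xi|$. Applying the second inequality \eqref{eq:lemmabili} of Lemma \ref{lem:lemma app1} (available in the regime $\delta>4\varepsilon^{1/(p-1)}$) gives $\mathcal{A}_\varepsilon(\xi_\tau)(\lambda,\lambda)\ge c\,|\xi_\tau|^{p-2}|\lambda|^2$ with $c=c(p,C_1,\delta)$; in the complementary regime $\varepsilon\ge(\delta/4)^{p-1}$, the first estimate \eqref{eq:lemmabili_a} delivers the same kind of bound after absorbing the factor $\varepsilon$ into a constant depending only on $p$ and $\delta$. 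Integrating in $\tau$ then yields $\langle A_\varepsilon(\xi)-A_\varepsilon(\tilde\xi),\xi-\tilde\xi\rangle\ge c\,|\xi|^{p-2}|\xi-\tilde\xi|^2$, and the desired inequality follows from $|\xi|(|\xi|+|\tilde\xi|)\le\tfrac{5}{2}|\xi|^2$ together with $|\xi|^p\ge(|\xi|-1-\delta/2)^p$.

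\textbf{Case 2: $|\xi-\tilde\xi|>\tfrac{1}{2}(|\xi|-1-\delta/2)$.} Here I would integrate only over the subinterval
\[
J:=\bigl\{\tau\in[0,1]\,:\,|\xi_\tau|>1+\delta/2\bigr\}\supseteq\bigl[\,1-(|\xi|-1-\delta/2)/|\xi-\tilde\xi|,\;1\,\bigr],
\]
whose length is at least $(|\xi|-1-\delta/2)/|\xi-\tilde\xi|$ by the triangle inequality. On $J$ the ellipticity estimate from Lemma \ref{lem:lemma app1} still applies, and after distinguishing between the super- and sub-quadratic cases (for $p\ge 2$ one further restricts to $\{|\xi_\tau|\ge|\xi|/2\}$ and uses $|\xi_\tau|^{p-2}\ge c\,|\xi|^{p-2}$, while for $1<p<2$ one exploits $|\xi_\tau|\le|\xi|+|\tilde\xi|$ to obtain $|\xi_\tau|^{p-2}\ge c\,(|\xi|+|\tilde\xi|)^{p-2}$) the claimed inequality follows from routine manipulations using $|\xi|+|\tilde\xi|\ge|\xi-\tilde\xi|$ and $|\xi|\ge|\xi|-1-\delta/2$.

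\textbf{Main obstacle.} The principal difficulty is to keep the constant $C$ independent of $\varepsilon$: the $|\xi_\tau|^{p-2}$-type bound from Lemma \ref{lem:lemma app1} is uniform in $\varepsilon$ only when $\delta>4\varepsilon^{1/(p-1)}$. The dichotomy $\varepsilon\le(\delta/4)^{p-1}$ versus $\varepsilon>(\delta/4)^{p-1}$ converts the $\varepsilon$-dependence into a dependence on $p$ and $\delta$ only. A further subtlety is that, in Case 2, the segment from $\tilde\xi$ to $\xi$ may traverse the degenerate set $\{|\eta|\le 1+\delta/2\}$; isolating the subinterval $J$ near $\tau=1$ is what overcomes this and produces the correct $(|\xi|+|\tilde\xi|)^{-1}$ scaling on the right-hand side.
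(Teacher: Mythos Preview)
Your overall strategy (integral representation plus restriction to a subinterval where the ellipticity is non-degenerate) is the same as the paper's, and Case~1 as well as Case~2 for $p\ge 2$ are fine. However, your Case~2 argument for $1<p<2$ has a gap. Using only $|\xi_\tau|\le |\xi|+|\tilde\xi|$ to get $|\xi_\tau|^{p-2}\ge (|\xi|+|\tilde\xi|)^{p-2}$ on all of $J$, you obtain a lower bound of order
\[
(|\xi|-1-\tfrac{\delta}{2})\,(|\xi|+|\tilde\xi|)^{p-2}\,|\xi-\tilde\xi|,
\]
and the ``routine manipulations'' you list do not convert this into the target. Concretely, take $|\xi|$ fixed and $|\tilde\xi|=M\to\infty$: your bound behaves like $M^{p-1}$, whereas the right-hand side of the lemma behaves like $M$, so the inequality fails for large $M$ when $p<2$. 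The estimate $(|\xi|+|\tilde\xi|)^{p-2}$ is simply too small once $|\tilde\xi|\gg|\xi|$.

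The fix is minor and you already have it in your $p\ge 2$ treatment: restrict further to the half-interval $[1-L/2,1]$ with $L=(|\xi|-1-\delta/2)/|\xi-\tilde\xi|$. There $|\xi_\tau-\xi|\le \tfrac{1}{2}(|\xi|-1-\delta/2)<\tfrac{1}{2}|\xi|$, so $|\xi_\tau|\le \tfrac{3}{2}|\xi|$ and hence $|\xi_\tau|^{p-2}\ge c\,|\xi|^{p-2}$ also for $p<2$; then the computation you wrote for $p\ge 2$ goes through verbatim. In other words, there is no need to split according to $p$.

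For comparison, the paper avoids this dichotomy altogether by first recasting the ellipticity in Lemma~\ref{lem:lemma app1} as
\[
\mathcal{A}_\varepsilon(x,t,\eta)(\lambda,\lambda)\ge c\,\frac{(|\eta|-1-\tfrac{\delta}{2})_+^{\,p-1}}{|\eta|}\,|\lambda|^2
\]
for all $\eta\neq 0$, then pulling out $1/|\xi_s|\ge 1/(|\xi|+|\tilde\xi|)$ and estimating $\int_0^1(|\xi_s|-1-\delta/2)_+^{p-1}\,ds$ via the case split $|\tilde\xi|\le|\xi|$ versus $|\tilde\xi|>|\xi|$. Because only the \emph{positive} power $p-1$ of $|\xi_s|-1-\delta/2$ remains in the integral, no distinction between $p<2$ and $p\ge 2$ is needed.
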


\begin{proof}[\textbf{{Proof}}]
Here we argue as in \cite[Lemma 2.8]{BDGP}. The inequality
on the left-hand side of (\ref{eq:lemmabili}) implies that 
\[
\mathbf{\mathcal{A}}_{\varepsilon}(x,t,\xi)(\lambda,\lambda) \geq c\,  \frac{(\vert\xi\vert-1-\frac{\delta}{2})_{+}^{p-1}}{\vert\xi\vert} \vert\lambda\vert^{2}
\]
holds for any $\xi\in\mathbb{R}^{Nn}\setminus\{0\}$ and any $\lambda\in\mathbb{R}^{Nn^{2}}$,
where $c\equiv c(p,C_{1},\delta)>0$. Abbreviating $\xi_{s}:=\xi+s (\tilde{\xi}-\xi)$,
for $s\in[0,1]$, we find 
\begin{align}
\langle A_{\varepsilon}(x,t,\xi)-A_{\varepsilon}(x,t,\tilde{\xi}),\xi-\tilde{\xi}\rangle  & \geq c\int_{0}^{1} \frac{(\left|\xi_{s}\right|-1-\frac{\delta}{2})_{+}^{p-1}}{\left|\xi_{s}\right|}  ds  \vert\xi-\tilde{\xi}\vert^{2}\nonumber\\
 & \geq\frac{c}{\left|\xi\right|+\vert\tilde{\xi}\vert}\int_{0}^{1}\left(\left|\xi_{s}\right|-1-\tfrac{\delta}{2}\right)_{+}^{p-1}ds  \vert\xi-\tilde{\xi}\vert^{2}.
\label{eq:est1}
\end{align}
It remains to estimate the integral in the right-hand side of \eqref{eq:est1}.
To this end, we distinguish whether or not $\vert\tilde{\xi}\vert\leq|\xi\vert$.
If $\vert\tilde{\xi}\vert\leq|\xi\vert$, then 
\[
\vert\xi_{s}\vert\geq(1-s) \vert\xi\vert-s \vert\tilde{\xi}\vert \geq (1-2s) \vert\xi\vert>1+\frac{\delta}{2}\qquad \forall \ s\in\bigg[0,\frac{\left|\xi\right|-1-\frac{\delta}{2}}{2\left|\xi\right|}\bigg).
\]
For $s\in\big[0,\frac{|\xi|-1-\delta/2}{4|\xi|}\big]$
this implies a bound from below in the form 
\[
\left(\left|\xi_{s}\right|-1-\frac{\delta}{2}\right)_{+}\geq(1-2s) \vert\xi\vert-1-\frac{\delta}{2}
\geq
\bigg(1-\frac{\left|\xi\right|-1-\frac{\delta}{2}}{2\left|\xi\right|}\bigg) \vert\xi\vert-1-\frac{\delta}{2}\geq\frac{1}{2} \left(\left|\xi\right|-1-\tfrac{\delta}{2}\right).
\]
Thus, we obtain that 
\begin{equation}
\int_{0}^{1}\left(\left|\xi_{s}\right|-1-\tfrac{\delta}{2}\right)_{+}^{p-1}ds \geq\int_{0}^{\frac{\left|\xi\right|-1-\frac{\delta}{2}}{4\left|\xi\right|}}\left(\left|\xi_{s}\right|-1-\tfrac{\delta}{2}\right)_{+}^{p-1}ds \geq \frac{\left(\left|\xi\right|-1-\frac{\delta}{2}\right)^{p}}{2^{p+1}\vert\xi\vert}.\label{eq:est2}
\end{equation}
In the case $\vert\tilde{\xi}\vert>|\xi\vert$, we estimate $\vert\xi_{s}\vert$
from below as follows 
\[
\vert\xi_{s}\vert\geq s \vert\tilde{\xi}\vert-(1-s) \vert\xi\vert \geq (2s-1) \vert\xi\vert>1+\frac{\delta}{2}\qquad \forall \ s\in\bigg(\frac{\left|\xi\right|+1+\frac{\delta}{2}}{2\left|\xi\right|},1\bigg].
\]
Therefore, for $s\in\big[\frac{3\left|\xi\right|+ 1+\delta/2}{4\left|\xi\right|},1\big]$
we get 
\[
\left(\left|\xi_{s}\right|-1-\tfrac{\delta}{2}\right)_{+}\geq(2s-1) \vert\xi\vert-1-\frac{\delta}{2}\geq\bigg(\frac{3\left|\xi\right|+1+\frac{\delta}{2}}{2\left|\xi\right|}-1\bigg)\vert\xi\vert-1-\frac{\delta}{2}\geq\frac{1}{2} \left(\left|\xi\right|-1-\tfrac{\delta}{2}\right),
\]
which yields 
\begin{equation}
\int_{0}^{1}\left(\left|\xi_{s}\right|-1-\tfrac{\delta}{2}\right)_{+}^{p-1}ds \geq\int_{\frac{3\left|\xi\right|+ 1+\frac{\delta}{2}}{4\left|\xi\right|}}^{1}\left(\left|\xi_{s}\right|-1-\tfrac{\delta}{2}\right)_{+}^{p-1}ds \geq \frac{\left(\left|\xi\right|-1-\frac{\delta}{2}\right)^{p}}{2^{p+1}\vert\xi\vert}.\label{eq:est3}
\end{equation}
Combining estimates (\ref{eq:est2}) and (\ref{eq:est3}) with \eqref{eq:est1},
we conclude the proof.
\end{proof}

Using the previous lemma, we can easily achieve the following
result:
\begin{lem}\label{lem:Glemma}
Let $p>1$, $\delta>0$, $0<\varepsilon<\min \{\frac{1}{2},(\frac{\delta}{4})^{p-1}\}$
and $\xi,\tilde{\xi}\in\mathbb{R}^{Nn}$. Then, for every $\nu>0$
and almost every $x\in\Omega$ we have 
\[
\vert G_{\delta}(\xi)-G_{\delta}(\tilde{\xi})\vert^{p} \leq \varepsilon^{\nu}(\max \{\vert\xi\vert,1+\delta\})^{p}+C \varepsilon^{-\nu}\langle A_{\varepsilon}(x,t,\xi)-A_{\varepsilon}(x,t,\tilde{\xi}),\xi-\tilde{\xi}\rangle
\]
for a positive constant $C\equiv C(p,C_{1},\delta)$.
\end{lem}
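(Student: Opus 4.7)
The plan is to combine the $1$-Lipschitz property of $G_\delta$ with the quantitative monotonicity of $A_\varepsilon$ furnished by Lemma \ref{lem:lem3}, and then to apply Young's inequality in the subquadratic regime. If both $|\xi|$ and $|\tilde\xi|$ lie in $[0, 1+\delta]$, then $G_\delta(\xi) = G_\delta(\tilde\xi) = 0$ and the inequality is trivial, the right-hand side being non-negative by the monotonicity of $A_\varepsilon$. Otherwise, letting $\xi_\ast \in \{\xi, \tilde\xi\}$ denote the one with the larger modulus, $|\xi_\ast| > 1 + \delta > 1 + \delta/2$, and the hypothesis $\varepsilon < (\delta/4)^{p-1}$ rewrites as $\delta > 4\varepsilon^{1/(p-1)}$, placing us within the scope of Lemma \ref{lem:lem3}. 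Applying that lemma with $\xi_\ast$ in the distinguished position, and using $|\xi| + |\tilde\xi| \leq 2|\xi_\ast|$ together with the elementary bound $|\xi_\ast| - 1 - \delta/2 \geq \frac{\delta}{2(1+\delta)}|\xi_\ast|$ valid for $|\xi_\ast| \geq 1+\delta$, I obtain
\begin{equation*}
I := \langle A_\varepsilon(x,t,\xi) - A_\varepsilon(x,t,\tilde\xi), \xi-\tilde\xi\rangle \geq c\, |\xi_\ast|^{p-2}\, |\xi - \tilde\xi|^2
\end{equation*}
for some $c = c(p, C_1, \delta) > 0$. Meanwhile, $G_\delta = \mathrm{id} - \Pi$, where $\Pi$ is the projection onto the closed ball $\overline{B_{1+\delta}} \subset \mathbb{R}^{Nn}$; since $\mathrm{id} - \Pi$ is nonexpansive (a standard consequence of firm nonexpansivity of convex projections), $|G_\delta(\xi) - G_\delta(\tilde\xi)|^p \leq |\xi - \tilde\xi|^p$.

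If $p \geq 2$, the estimate $|\xi - \tilde\xi| \leq 2|\xi_\ast|$ yields $|\xi - \tilde\xi|^p \leq (2|\xi_\ast|)^{p-2} |\xi - \tilde\xi|^2 \leq C I \leq C \varepsilon^{-\nu} I$ (using $\varepsilon < 1$), and the $\varepsilon^\nu M^p$ term is superfluous. If $1 < p < 2$, I rearrange to $|\xi - \tilde\xi|^2 \leq c^{-1} |\xi_\ast|^{2-p} I$, take the $(p/2)$-th power to get $|\xi - \tilde\xi|^p \leq c^{-p/2} |\xi_\ast|^{p(2-p)/2} I^{p/2}$, and invoke Young's inequality with conjugate exponents $\frac{2}{2-p}$ and $\frac{2}{p}$. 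Writing $M := \max\{|\xi|, 1+\delta\}$, if $|\xi_\ast| \leq 2M$ I balance the Young parameter to obtain $|\xi - \tilde\xi|^p \leq \varepsilon^\nu M^p + C \varepsilon^{-\nu (2-p)/p} I$, then enlarge the exponent $-\nu(2-p)/p$ to $-\nu$ using $\varepsilon < 1$ and $(2-p)/p < 1$. If instead $|\xi_\ast| > 2 M$, then necessarily $\xi_\ast = \tilde\xi$ and $|\xi - \tilde\xi| \geq |\tilde\xi|/2$, so the lower bound on $I$ sharpens to $I \geq (c/4) |\tilde\xi|^p$, whence $|\xi - \tilde\xi|^p \leq C|\tilde\xi|^p \leq C \varepsilon^{-\nu} I$ directly, making the $\varepsilon^\nu M^p$ term unnecessary.

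The main delicacy lies in the subquadratic case: a naive Young step produces $|\xi_\ast|^p$ on the right, which need not be dominated by $M^p$ when $|\tilde\xi|$ greatly exceeds both $|\xi|$ and $1+\delta$. The sub-case split above is what handles this: a disproportionately large $|\tilde\xi|$ forces $|\xi - \tilde\xi|$ to be comparable with $|\tilde\xi|$, so the inner product $I$ alone dominates $|\xi - \tilde\xi|^p$. Keeping the final constant $C$ free of $\nu$-dependence rests on trading the sharp exponent $-\nu(2-p)/p$ for $-\nu$, which costs nothing thanks to $\varepsilon < 1$.
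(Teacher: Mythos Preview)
Your argument is correct. The backbone is the same as the paper's---Lemma~\ref{lem:lem3} combined with a Lipschitz bound for $G_\delta$ and Young's inequality in the subquadratic case---but two points differ. First, you use the sharp $1$-Lipschitz bound $|G_\delta(\xi)-G_\delta(\tilde\xi)|\le|\xi-\tilde\xi|$ coming from the identity $G_\delta=\mathrm{id}-\Pi$ with $\Pi$ the projection onto $\overline{B_{1+\delta}}$, whereas the paper invokes a factor-$3$ bound from \cite[Lemma~2.3]{BDGP}. Second, and more substantially, the two proofs diverge in how they handle the asymmetry when $|\tilde\xi|>|\xi|$ (so that a naive Young step produces $\varepsilon^\nu|\tilde\xi|^p$ instead of $\varepsilon^\nu M^p$). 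The paper swaps the roles of $\xi$ and $\tilde\xi$, shrinks $\varepsilon$ by a fixed factor, expands $|\tilde\xi|=|G_\delta(\tilde\xi)|+1+\delta$, and absorbs a term $\tfrac12|G_\delta(\xi)-G_\delta(\tilde\xi)|^p$ into the left-hand side. You instead split on whether $|\xi_\ast|\le 2M$ or $|\xi_\ast|>2M$: in the first sub-case Young gives the result directly, while in the second the separation forces $|\xi-\tilde\xi|\ge\tfrac12|\tilde\xi|$, so the monotonicity bound alone yields $I\gtrsim|\tilde\xi|^p\gtrsim|\xi-\tilde\xi|^p$ and no $\varepsilon^\nu M^p$ term is needed. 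Your route avoids the absorption trick and is arguably cleaner; the paper's route is slightly more uniform in that it never needs the extra dichotomy.
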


\begin{proof}[\textbf{{Proof}}]
If $\vert\xi\vert,\vert\tilde{\xi}\vert\leq1+\delta$,
we have $G_{\delta}(\xi)=G_{\delta}(\tilde{\xi})=0$. Therefore, the
claimed inequality immediately follows from Lemma \ref{lem:lem3},
by the positivity of the right-hand side.

Thus, we only need to consider the case where either
$\vert\xi\vert>1+\delta$ or $\vert\tilde{\xi}\vert>1+\delta$. In
order to do this, we first assume that $\vert\xi\vert\geq\vert\tilde{\xi}\vert$.
Note that this implies $\vert\xi\vert>1+\delta$. From \cite[Lemma 2.3]{BDGP}
we know that 
\[
\vert G_{\delta}(\xi)-G_{\delta}(\tilde{\xi})\vert\leq3 \vert\xi-\tilde{\xi}\vert.
\]
For $p\geq2$ this yields 
\[
\vert G_{\delta}(\xi)-G_{\delta}(\tilde{\xi})\vert^{p} \leq 3^{p} \vert\xi-\tilde{\xi}\vert^{p} \leq 3^{p} (\vert\xi\vert+\vert\tilde{\xi}\vert)^{p-2} \vert\xi-\tilde{\xi}\vert^{2} \leq 3^{p} 2^{p-2} \vert\xi\vert^{p-2} \vert\xi-\tilde{\xi}\vert^{2},
\]
while in the case $1<p<2$ we use Young's inequality with exponents
$(\frac{2}{p},\frac{2}{2-p})$ to obtain 
\begin{align*}
\vert G_{\delta}(\xi)-G_{\delta}(\tilde{\xi})\vert^{p}  & = \vert\xi\vert^{\frac{(2-p)p}{2}} \vert\xi\vert^{\frac{(p-2)p}{2}} \varepsilon^{\frac{(2-p)\nu}{2}} \varepsilon^{\frac{(p-2)\nu}{2}} \vert G_{\delta}(\xi)-G_{\delta}(\tilde{\xi})\vert^{p}\\
 & \leq \frac{(2-p) \varepsilon^{\nu}}{4} \vert\xi\vert^{p}+2^{\frac{2(1-p)}{p}}p \varepsilon^{\frac{(p-2)\nu}{p}} \vert\xi\vert^{p-2} \vert G_{\delta}(\xi)-G_{\delta}(\tilde{\xi})\vert^{2}\\
 & \leq \frac{\varepsilon^{\nu}}{4} \vert\xi\vert^{p}+9 p \varepsilon^{-\nu} \vert\xi\vert^{p-2} \vert\xi-\tilde{\xi}\vert^{2},
\end{align*}
where we have used the fact that $0<\varepsilon<\frac{1}{2}$ and
$0<\frac{2-p}{p}<1$ whenever $p\in(1,2)$. Combining both cases,
taking into account the inequalities $\frac{1}{\vert\xi\vert}\leq\frac{2}{\vert\xi\vert+\vert\tilde{\xi}\vert}$
and $\vert\xi\vert\leq\left(2+\frac{2}{\delta}\right)(\vert\xi\vert-1-\frac{\delta}{2})$,
and finally applying Lemma \ref{lem:lem3}, we get 
\begin{align}\vert G_{\delta}(\xi)-G_{\delta}(\tilde{\xi})\vert^{p} & \leq \varepsilon^{\nu} \vert\xi\vert^{p}+\varepsilon^{-\nu} c(\delta,p)  \frac{\left(\vert\xi\vert-1-\frac{\delta}{2}\right)^{p}}{\vert\xi\vert (\vert\xi\vert+\vert\tilde{\xi}\vert)}  \vert\xi-\tilde{\xi}\vert^{2}\nonumber\\
 & \leq \varepsilon^{\nu} \vert\xi\vert^{p}+C(p,C_{1},\delta) \varepsilon^{-\nu} \langle A_{\varepsilon}(x,t,\xi)-A_{\varepsilon}(x,t,\tilde{\xi}),\xi-\tilde{\xi}\rangle.
\label{eq:est4_a}
\end{align}
This proves the claimed inequality in the case $\vert\xi\vert\geq\vert\tilde{\xi}\vert$.
In the remaining case, i.e. $\vert\tilde{\xi}\vert>\vert\xi\vert$,
by interchanging the roles of $\xi$ and $\tilde{\xi}$ and replacing
$\varepsilon$ with $\frac{\varepsilon}{2^{\frac{2+(p-2)_{+}}{\nu}}}$, we obtain from \eqref{eq:est4_a} that
\begin{align*}
\vert G_{\delta}(\xi)-G_{\delta}(\tilde{\xi})\vert^{p} & \leq \frac{\varepsilon^{\nu}}{2^{2+(p-2)_{+}}} \vert\tilde{\xi}\vert^{p}+C \varepsilon^{-\nu} \langle A_{\varepsilon}(x,t,\xi)-A_{\varepsilon}(x,t,\tilde{\xi}),\xi-\tilde{\xi}\rangle\nonumber \\
 & = \frac{\varepsilon^{\nu}}{2^{2+(p-2)_{+}}} (\vert G_{\delta}(\tilde{\xi})\vert+1+\delta)^{p}+C \varepsilon^{-\nu} \langle A_{\varepsilon}(x,t,\xi)-A_{\varepsilon}(x,t,\tilde{\xi}),\xi-\tilde{\xi}\rangle \nonumber \\
 & \leq \frac{\varepsilon^{\nu}}{2} (\vert G_{\delta}(\xi)\vert+1+\delta)^{p}+C \varepsilon^{-\nu} \langle A_{\varepsilon}(x,t,\xi)-A_{\varepsilon}(x,t,\tilde{\xi}),\xi-\tilde{\xi}\rangle \nonumber \\
 & \quad +\frac{\varepsilon^{\nu}}{2} \vert G_{\delta}(\xi)-G_{\delta}(\tilde{\xi})\vert^{p} \nonumber \\
 & \leq \frac{\varepsilon^{\nu}}{2} (\max \{\vert\xi\vert,1+\delta\})^{p}+C \varepsilon^{-\nu} \langle A_{\varepsilon}(x,t,\xi)-A_{\varepsilon}(x,t,\tilde{\xi}),\xi-\tilde{\xi}\rangle \nonumber \\
 & \quad +\frac{1}{2} \vert G_{\delta}(\xi)-G_{\delta}(\tilde{\xi})\vert^{p}.
\end{align*}
Absorbing the last term on the right-hand side
into the left-hand side, we obtain the desired result. 
\end{proof}

\section{A family of regularized parabolic problems \label{sec:uniqueness}}

In this section, we let $u$ be a weak solution of (\ref{eq:syst}) and introduce the approximating
Cauchy-Dirichlet problem (\ref{eq:CAUCHYDIR}), where $\varepsilon\in(0,\frac{1}{2})$
is the approximation parameter. Denoting by $u_{\varepsilon}$ the
unique weak solution of this problem, we will prove the strong
convergence $G_{\delta}(Du_{\varepsilon})\rightarrow G_{\delta}(Du)$ in $L^{p}$ as $\varepsilon\rightarrow0$. As an easy consequence of this convergence, we then establish
the uniqueness of weak solutions to an initial-boundary value problem
associated with (\ref{eq:syst}). 

To set up the approximating problem, for $\varepsilon\in(0,\frac{1}{2})$ we define the truncated vector-valued function $f_{\varepsilon}$
by 
\begin{equation}
f_{\varepsilon}^{i}:=\max\left\{ - \tfrac{1}{\varepsilon} ,\min\left\{ f^{i},\tfrac{1}{\varepsilon}\right\} \right\} ,\qquad i\in\{1,\ldots,N\}.\label{eq:f_eps}
\end{equation}
Moreover, we consider a space-time cylinder $Q':=\Omega'\times I$,
where $\Omega'\subseteq\Omega$ is a bounded domain and $I:=(t_{1},t_{2})\subseteq(0,T)$.
In what follows, it will suffice to assume that $f\in L^{2}(Q',\mathbb{R}^{N})$.\medskip{}

\begin{defn}\label{def:L2-traces} 
Let $\varepsilon\in(0,\frac{1}{2})$
and $u\in C^{0}\left([t_{1},t_{2}];L^{2}\left(\Omega',\mathbb{R}^{N}\right)\right)\cap L^{p}\left(t_{1},t_{2};W^{1,p}\left(\Omega',\mathbb{R}^{N}\right)\right)$.
In this framework, we identify a function 
\[
u_{\varepsilon}\in C^{0}\left([t_{1},t_{2}];L^{2}\left(\Omega',\mathbb{R}^{N}\right)\right)\cap L^{p}\left(t_{1},t_{2};W^{1,p}\left(\Omega',\mathbb{R}^{N}\right)\right)
\]
as a \textit{weak solution of the Cauchy-Dirichlet problem\medskip{}
 } 
\begin{equation}
\begin{cases}
\partial_{t}u_{\varepsilon}-\mathrm{div} [A_{\varepsilon}(x,t,Du_{\varepsilon})]=f_{\varepsilon} & \qquad \text{in } Q',\\
u_{\varepsilon}=u & \qquad \text{on } \partial_{\mathrm{par}}Q',
\tag{\ensuremath{\mathcal{P}_{\varepsilon}}}\end{cases}\label{eq:CAUCHYDIR}
\end{equation}
if and only if $u_{\varepsilon}$ is a weak solution of $(\ref{eq:CAUCHYDIR})_{1}$
and, moreover, 
\[
u_{\varepsilon} \in u+L^{p}\left(t_{1},t_{2};W_{0}^{1,p}\left(\Omega',\mathbb{R}^{N}\right)\right)
\]
and $u_{\varepsilon}(\cdot,t_{1})=u(\cdot,t_{1})$ in the
$L^{2}$-sense, that is 
\begin{equation}
\underset{t \searrow t_{1}}{\lim} \Vert u_{\varepsilon}(\cdot,t)-u(\cdot,t_{1})\Vert_{L^{2}(\Omega')} = 0.\label{eq:L2sense}
\end{equation}
\end{defn}

\begin{brem}\label{rmk2} We know that the regularized
parabolic system $(\ref{eq:CAUCHYDIR})_{1}$ fulfills standard $p $-growth
conditions by virtue of (\ref{eq:bil_form}) and Lemma \ref{lem:lemma app1}.
The existence of a unique weak solution to (\ref{eq:CAUCHYDIR}) can be inferred from the classic existence theory, cf.
\cite[Chapter 2, Theorem 1.2 and Remark 1.2]{Lions}.
%
%
By a difference quotient method, one can show that $Du_\epsilon$ is locally bounded and that $u_{\varepsilon}$ admits second weak spatial derivatives in $L^2_{loc}$, see \cite[Chapter 8]{DiBene}.
For this to hold in the subcritical case $1<p\le \frac{2n}{n +2}$, we additionally have to require that $u_{\varepsilon}$ belongs to $L_{loc}^{r}(Q',\mathbb{R}^{N}$),
where $r\geq2$ satisfies $n(p-2)+rp>0$ (see again \cite[Chapter 8]{DiBene}). Since, in the subcritical case, we always assume that $u_{\varepsilon}\in L_{loc}^{\infty}(Q',\mathbb{R}^{N})$, the latter requirement is trivially fulfilled.
\end{brem} 

\begin{thm}\label{thm:regul} 
Let $p>1$ and $u_{\varepsilon}$
be the weak solution of problem $\mathrm{(\ref{eq:CAUCHYDIR})}$ with
$Q'=Q_{R}(z_{0})\Subset\Omega_{T}$. Moreover, assume that  $u_{\varepsilon}$ satisfies the requirements of Remark~\ref{rmk2} in the case $1<p\le \frac{2n}{n +2}$. Then 
\begin{equation}
Du_{\varepsilon}\in L_{loc}^{2}\left(t_{0}-R^{2},t_{0};W_{loc}^{1,2}\left(B_{R}(x_{0}),\mathbb{R}^{Nn}\right)\right)\cap L_{loc}^{\infty}\left(Q_{R}(z_{0}),\mathbb{R}^{Nn}\right).\label{eq:2ndderivatives}
\end{equation} 
\end{thm}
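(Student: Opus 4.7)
For fixed $\varepsilon>0$ the system $(\ref{eq:CAUCHYDIR})_{1}$ is non-degenerate: by Lemma \ref{lem:lemma app1} and Lemma \ref{lem:lemma app2}, the bilinear form $\mathcal{A}_\varepsilon=D_\xi A_\varepsilon$ satisfies standard $p$-growth and ellipticity bounds, namely
\[
c_\varepsilon \,(1+|\xi|^2)^{\frac{p-4}{2}}|\xi|^2|\lambda|^2 \,\leq\, \mathcal{A}_\varepsilon(x,t,\xi)(\lambda,\lambda) \,\leq\, C_\varepsilon\,(1+|\xi|^2)^{\frac{p-2}{2}}|\lambda|^2,
\]
together with the Lipschitz spatial dependence (\ref{eq:spgraAeps}). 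Hence the claimed regularity is a consequence of the classical theory for parabolic systems of $p$-Laplacian type developed in \cite[Chapter 8]{DiBene}; my plan is to reduce to that framework in two steps.

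First, I would establish $D^2 u_\varepsilon\in L^2_{loc}$ by a spatial difference-quotient argument. Starting from the Steklov-averaged formulation recalled in Subsection \ref{subsec:Steklov}, one tests the equation with $\eta^2 \tau_{-h}\tau_h [u_\varepsilon]_h$, where $\tau_h$ denotes a finite difference in a spatial direction $e_j$ and $\eta$ is a smooth spatial cutoff compactly supported in $B_R(x_0)$. The ellipticity from Lemma \ref{lem:lemma app1} provides an energy term of the form $\int \eta^2 (1+|Du_\varepsilon|^2)^{\frac{p-2}{2}} |\tau_h Du_\varepsilon|^2$, while the $x$-dependence is controlled via (\ref{eq:spgraAeps}) and Young's inequality. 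Passing to the limits $h\to 0$ and then in the Steklov parameter yields the first inclusion in (\ref{eq:2ndderivatives}).

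Second, once $u_\varepsilon$ is twice weakly differentiable in space, I differentiate $(\ref{eq:CAUCHYDIR})_{1}$ formally in the direction $e_j$ and test the resulting linear system for $\partial_j u_\varepsilon$ with $\eta^2(1+|Du_\varepsilon|^2)^{\frac{\gamma-2}{2}}\partial_j u_\varepsilon$ (summing in $j$). Using Lemma \ref{lem:lemma app2} to bound the spatial error terms, this delivers a Caccioppoli inequality for powers of $(1+|Du_\varepsilon|^2)^{1/2}$. Combined with parabolic Sobolev embedding and a Moser iteration on the exponent $\gamma$, one obtains $Du_\varepsilon \in L^\infty_{loc}(Q_R(z_0),\mathbb{R}^{Nn})$ in the regimes $p\geq 2$ and $\frac{2n}{n+2}<p<2$.

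The main obstacle is the subcritical regime $1<p\leq\frac{2n}{n+2}$, in which the parabolic Sobolev embedding is too weak to close the Moser iteration by pure energy arguments. Here one must supplement the Caccioppoli estimate with the auxiliary integrability $u_\varepsilon\in L^r_{loc}$ for some $r\geq 2$ satisfying $n(p-2)+rp>0$, exactly as recalled in Remark \ref{rmk2}; in our setting this is automatic from the assumption $u_\varepsilon\in L^\infty_{loc}$ that is maintained in this case. With this extra integrability in hand, the iteration closes and the full conclusion (\ref{eq:2ndderivatives}) follows as in \cite[Chapter 8]{DiBene}.
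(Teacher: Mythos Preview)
Your proposal is correct and aligns with the paper's treatment: the paper does not give an independent proof of this theorem but rather states it immediately after Remark~\ref{rmk2}, which explains that the regularized system satisfies standard $p$-growth conditions and refers to the difference-quotient method and Moser iteration of \cite[Chapter~8]{DiBene} for both the second-order differentiability and the local gradient bound, including the caveat on the subcritical range that you also identified. In other words, you have sketched precisely the classical argument the paper cites.
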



We now prove the following strong convergence result:

\begin{lem}\label{lem:Lpconv}
Let $p>1$, $\delta>0$, $0<\varepsilon<\min \{\frac{1}{2},(\frac{\delta}{4})^{p-1}\}$
and $f\in L^{2}(\Omega'\times I,\mathbb{R}^{N})$. Moreover, let 
\[u \in C^{0}\left([t_{1},t_{2}];L^{2}\left(\Omega',\mathbb{R}^{N}\right)\right)\cap L^{p}\left(t_{1},t_{2};W^{1,p}\left(\Omega',\mathbb{R}^{N}\right)\right)
\] be a weak solution of $\mathrm{(\ref{eq:syst})}$ and assume
that 
\[
u_{\varepsilon}\in C^{0}\left([t_{1},t_{2}];L^{2}\left(\Omega',\mathbb{R}^{N}\right)\right)\cap L^{p}\left(t_{1},t_{2};W^{1,p}\left(\Omega',\mathbb{R}^{N}\right)\right)
\]
is the unique energy solution of problem $\mathrm{(\ref{eq:CAUCHYDIR})}$.
Then, there exists a constant $\widetilde{C}\equiv\widetilde{C}(p,\hat{n},\delta)\in(0,\min \{\frac{1}{2},(\frac{\delta}{4})^{p-1}\})$
such that for every $\varepsilon\in(0,\widetilde{C})$, the estimate
\begin{align*}
 & \underset{\tau\in I}{\sup}\int_{\Omega'}\vert u_{\varepsilon}-u\vert^{2}(x,\tau) \, dx +\int_{\Omega'\times I}\vert G_{\delta}(Du_{\varepsilon})-G_{\delta}(Du)\vert^{p}\,dz\\
 & \qquad \leq C \varepsilon^{\frac{\hat{n}}{2(\hat{n}+2)}}\int_{\Omega'\times I}(\max \{\vert Du\vert,1+\delta\})^{p}\,dz+C \vert I\vert\int_{(\Omega'\times I)\cap\{ \vert f\vert > \frac{1}{\varepsilon}\} }\vert f\vert^{2}\,dz
\end{align*}
holds for some positive constant $C\equiv C(p,C_{1},\delta)$. In
particular, this estimate implies that 
\[
G_{\delta}(Du_{\varepsilon})\rightarrow G_{\delta}(Du) \quad 
strongly \ in\quad L^{p}(\Omega'\times I,\mathbb{R}^{Nn}),\quad as \quad \varepsilon\rightarrow0^{+}.
\]
\end{lem}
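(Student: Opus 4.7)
The approach is to derive an energy identity for $u_\varepsilon - u$ by testing the difference of the two weak formulations with $u_\varepsilon - u$ itself, then to convert the arising monotonicity contribution into a bound for $\|G_\delta(Du_\varepsilon) - G_\delta(Du)\|_{L^p}^p$ via Lemma~\ref{lem:Glemma}, and finally to estimate the residual error terms using the approximation properties of $A_\varepsilon$ and $f_\varepsilon$.

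\emph{Step 1 (Energy identity via Steklov averages).} Since $u$ solves \eqref{eq:syst} and $u_\varepsilon$ solves \eqref{eq:CAUCHYDIR}, and since $u_\varepsilon - u \in L^p(I; W^{1,p}_0(\Omega',\mathbb{R}^N))$, I would insert $[u_\varepsilon - u]_h$ as test function in the difference of the Steklov-averaged formulations and pass to the limit $h \to 0$ using Lemma~\ref{lem:Stek}. Invoking the initial condition \eqref{eq:L2sense}, this yields for every $\tau \in I$
\begin{equation*}
\tfrac{1}{2}\int_{\Omega'}|u_\varepsilon - u|^2(x,\tau)\,dx + \int_{\Omega' \times (t_1,\tau)}\langle A_\varepsilon(Du_\varepsilon) - A(Du), Du_\varepsilon - Du\rangle\,dz = \int_{\Omega' \times (t_1,\tau)}(f_\varepsilon - f)\cdot(u_\varepsilon - u)\,dz.
\end{equation*}

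\emph{Step 2 (Decomposition and application of Lemma~\ref{lem:Glemma}).} I would split
\begin{equation*}
A_\varepsilon(Du_\varepsilon) - A(Du) = \bigl[A_\varepsilon(Du_\varepsilon) - A_\varepsilon(Du)\bigr] + \bigl[A_\varepsilon(Du) - A(Du)\bigr],
\end{equation*}
isolating the ``monotone'' piece $E := \int\langle A_\varepsilon(Du_\varepsilon) - A_\varepsilon(Du), Du_\varepsilon - Du\rangle\,dz$, which is what Lemma~\ref{lem:Glemma} is designed to exploit. Applied pointwise with $\xi = Du$, $\tilde{\xi} = Du_\varepsilon$ and then integrated, it gives, for any $\nu > 0$,
\begin{equation*}
\int|G_\delta(Du_\varepsilon) - G_\delta(Du)|^p\,dz \leq \varepsilon^\nu\int(\max\{|Du|,1+\delta\})^p\,dz + C\varepsilon^{-\nu} E.
\end{equation*}
This is the bridge that turns a monotonicity-type quantity into the $L^p$-distance between $G_\delta(Du_\varepsilon)$ and $G_\delta(Du)$, which is exactly what we want to control.

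\emph{Step 3 (Error control).} The two contributions on the right-hand side of the energy identity are handled separately. For the $f$-term, since $f_\varepsilon - f$ vanishes on $\{|f|\leq 1/\varepsilon\}$, Cauchy--Schwarz and Young yield
\begin{equation*}
\biggl|\int (f_\varepsilon - f)(u_\varepsilon - u)\,dz\biggr| \leq \tfrac{1}{4}\sup_\tau\int|u_\varepsilon - u|^2(x,\tau)\,dx + C|I|\int_{\{|f|>1/\varepsilon\}}|f|^2\,dz,
\end{equation*}
and the first summand is absorbed into the left-hand side of the energy identity. For the approximation-error term, Lemma~\ref{eq:approxin} together with the structure~\eqref{eq:h-fun-bis} of $A_\varepsilon$ furnishes the pointwise bound $|A_\varepsilon(x,t,Du) - A(x,t,Du)| \leq C\varepsilon(1+|Du|)^{p-1}$; combining Young's inequality with the a priori $L^p$-bound on $Du_\varepsilon$ (uniform in $\varepsilon$, obtained by testing $(\ref{eq:CAUCHYDIR})_1$ with $u_\varepsilon - u$ and using that $\partial_s F_\varepsilon \equiv \partial_s F$ outside $[1,1+2\varepsilon^{1/(p-1)}]$), this produces a bound of the form $|R_2| \leq C\varepsilon$ up to harmless constants depending on $\|Du\|_{L^p}$.

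\emph{Main obstacle and conclusion.} The delicate step is the calibration of $\nu$ (and the Young exponents) so that the two $\varepsilon$-contributions --- namely $\varepsilon^\nu$ from Lemma~\ref{lem:Glemma} and $\varepsilon^{-\nu}\cdot|R_2|$ from the $A$-error term --- balance and deliver the announced rate $\varepsilon^{\hat n/(2(\hat n+2))}$. This exponent is the outcome of an interpolation via parabolic Sobolev embedding of $u_\varepsilon - u \in L^\infty_t L^2_x \cap L^p_t W^{1,p}_x$ into $L^{p(\hat n+2)/\hat n}$, which is precisely what brings the dependence on $\hat n$ into the estimate. Once the estimate is obtained, the strong convergence $G_\delta(Du_\varepsilon) \to G_\delta(Du)$ in $L^p(\Omega'\times I, \mathbb{R}^{Nn})$ follows because $\int_{\{|f|>1/\varepsilon\}}|f|^2\,dz \to 0$ as $\varepsilon\to 0^+$ by dominated convergence.
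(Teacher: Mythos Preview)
Your Steps 1 and 2 and the treatment of the $f$-term match the paper. The divergence is in Step 3 (the $A_\varepsilon - A$ error) and in your explanation of the exponent.

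The paper does \emph{not} invoke a separate uniform-in-$\varepsilon$ bound on $\|Du_\varepsilon\|_{L^p}$. Instead, after using $|A_\varepsilon(x,t,Du) - A(x,t,Du)| \le C\varepsilon\, |Du|^{p-1}$ (the analogous bound holds for $1<p\le 2$ via Lemma~\ref{eq:approxin}), Young's inequality with exponents $(p, \tfrac{p}{p-1})$ applied to $\varepsilon\int |Du|^{p-1}|Du_\varepsilon - Du|$ produces, besides a harmless term, the dangerous contribution $\varepsilon^\nu \int |Du_\varepsilon - Du|^p$. This is then converted back: the pointwise inequality $|Du_\varepsilon - Du|^p \le \bigl(\tfrac{\delta+2}{\delta}\bigr)^p |G_\delta(Du_\varepsilon) - G_\delta(Du)|^p$ holds whenever $|Du_\varepsilon|>1+\delta$ or $|Du|>1+\delta$ (from \cite[Lemma~2.3]{BDGP}), while on the complementary set $|Du_\varepsilon - Du|^p \le 2^p(1+\delta)^p$. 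The resulting term $\varepsilon^\nu\bigl(\tfrac{\delta+2}{\delta}\bigr)^p \int |G_\delta(Du_\varepsilon) - G_\delta(Du)|^p$ is absorbed into the left-hand side for $\varepsilon$ small enough; this absorption is precisely what fixes the threshold $\widetilde{C}(p,\hat n,\delta)$ in the statement. Your route via a uniform a~priori $L^p$-bound on $Du_\varepsilon$ is plausible but is an unnecessary detour, and you have not actually carried it out.

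Your claim that the exponent $\tfrac{\hat n}{2(\hat n+2)}$ arises from a parabolic Sobolev embedding is incorrect: no such embedding appears anywhere in the argument. The paper simply sets $\nu := \tfrac{\hat n}{2(\hat n+2)}$ as a convenient value strictly less than $\tfrac12$ (so that $\varepsilon^{1-\nu} < \varepsilon^\nu$ for $0<\varepsilon<1$, which is used when balancing the Young terms); any $\nu \in (0,\tfrac12)$ would yield the same conclusion. The dependence on $\hat n$ in the stated rate is cosmetic at this stage.
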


\begin{proof}[\textbf{{Proof}}]
We observe that $(u_{\varepsilon}-u)\in L^{p}(I;W_{0}^{1,p}(\Omega',\mathbb{R}^{N}))$
by the lateral boundary condition. Unfortunately, we cannot test systems
(\ref{eq:syst}) and $(\ref{eq:CAUCHYDIR})_{1}$ with the function
$u_{\varepsilon}-u$, since weak time derivatives might not exist.
Therefore, we resort to the equivalent Steklov averages formulations
of (\ref{eq:syst}) and $(\ref{eq:CAUCHYDIR})_{1}$, thus obtaining
\begin{equation}
    \int_{\Omega'\times\{t\}}
    \big(\partial_{t}[u_{\varepsilon}-u]_{h}\cdot\phi+\langle[A_{\varepsilon}(x,t,Du_{\varepsilon})-A(x,t,Du)]_{h},D\phi\rangle\big)dx=\int_{\Omega'\times\{t\}}[f_{\varepsilon}-f]_{h}\cdot\phi\,dx\label{eq:Stek}
\end{equation}
for every $t\in I=(t_{1},t_{2})$, for every $h\in(0,t_{2}-t_{1})$
and every $\phi\in W_{0}^{1,p}(\Omega',\mathbb{R}^{N})\cap L^{2}(\Omega',\mathbb{R}^{N})$.
Then, for a fixed time slice $\Omega'\times\{t\}$ we can choose the
function $\phi(\cdot,t)$ defined by 
\begin{equation}
\phi^{i}(x,t):=[u_{\varepsilon}^{i}-u^{i}]_{h}(x,t),\qquad i\in\{1,\ldots,N\},\label{eq:tf1}
\end{equation}
as a test function in (\ref{eq:Stek}). For any fixed $\tau\in I$,
the term involving the time derivatives yields\begin{align}\label{eq:est5}
\int_{t_{1}}^{\tau}\int_{\Omega'}
\partial_{t}[u_{\varepsilon}-u]_{h}\cdot\phi\,dx\,dt &= \frac{1}{2}\int_{t_{1}}^{\tau}\int_{\Omega'}\partial_{t}\vert[u_{\varepsilon}-u]_{h}\vert^{2}\,dx\,dt\nonumber\\
&= \frac{1}{2}\int_{\Omega'}\vert[u_{\varepsilon}-u]_{h}(x,\tau)\vert^{2}\,dx-\frac{1}{2}\int_{\Omega'}\vert[u_{\varepsilon}-u]_{h}(x,t_{1})\vert^{2}\,dx
\end{align}  for every $h\in(0,\tau-t_{1})$. Now we pass to the limit as $h\rightarrow0$.
Using Lemma \ref{lem:Stek} and taking into account the growth conditions
of $A_{\varepsilon}$ and the fact that $u_{\varepsilon}=u$ on $\overline{\Omega'}\times\{t_{1}\}$
in the $L^{2}$-sense, from (\ref{eq:Stek}) and \eqref{eq:est5}
we obtain the following inequality 
\begin{align*}
 & \frac{1}{2}\int_{\Omega'}\vert u_{\varepsilon}-u\vert^{2}(x,\tau)\,dx +\int_{\Omega'\times(t_{1},\tau)}\langle A_{\varepsilon}(x,t,Du_{\varepsilon})-A_{\varepsilon}(x,t,Du),Du_{\varepsilon}-Du\rangle\,dx\,dt\\
 & \quad \leq\int_{\Omega'\times(t_{1},\tau)}\vert A(x,t,Du)-A_{\varepsilon}(x,t,Du)\vert\vert Du_{\varepsilon}-Du\vert\,dx\,dt +\int_{\Omega'\times(t_{1},\tau)\cap\{ \vert f\vert > \frac{1}{\varepsilon}\} }\vert f\vert\vert u_{\varepsilon}-u\vert\,dx\,dt
\end{align*}
for every $\tau\in I$. Taking the supremum over $\tau\in I$, we
thus obtain 
\begin{align*}
& \frac{1}{2} \,\underset{\tau\in I}{\sup}\int_{\Omega'}\vert u_{\varepsilon}-u\vert^{2}(x,\tau)\,dx\,+\int_{\Omega'\times I}\langle A_{\varepsilon}(x,t,Du_{\varepsilon})-A_{\varepsilon}(x,t,Du),Du_{\varepsilon}-Du\rangle\,dx\,dt\\
 & \quad \leq\int_{\Omega'\times I}\vert A(x,t,Du)-A_{\varepsilon}(x,t,Du)\vert\vert Du_{\varepsilon}-Du\vert\,dx\,dt +\int_{(\Omega'\times I)\cap\{ \vert f\vert > \frac{1}{\varepsilon}\} }\vert f\vert\vert u_{\varepsilon}-u\vert\,dx\,dt.
\end{align*}
We now apply Young's inequality with exponents $(2,2)$ to control
the last integral as follows
\begin{align*}
\int_{(\Omega'\times I)\cap\{ \vert f\vert > \frac{1}{\varepsilon}\} }\vert f\vert\vert u_{\varepsilon}-u\vert\,dx\,dt &\leq \vert I\vert\int_{(\Omega'\times I)\cap\{ \vert f\vert > \frac{1}{\varepsilon}\} }\vert f\vert^{2}\,dx\,dt+\frac{1}{4 \vert I\vert}\int_{\Omega'\times I}\vert u_{\varepsilon}-u\vert^{2}\,dx\,dt\\
&\leq \vert I\vert\int_{(\Omega'\times I)\cap\{ \vert f\vert > \frac{1}{\varepsilon}\} }\vert f\vert^{2}\,dx\,dt+\frac{1}{4} \,\underset{\tau\in I}{\sup}\int_{\Omega'}\vert u_{\varepsilon}-u\vert^{2}(x,\tau)\,dx.
\end{align*}
Joining the two previous estimates, we get
\begin{align}\label{eq:est12}
& \frac{1}{4} \,\underset{\tau\in I}{\sup}\int_{\Omega'}\vert u_{\varepsilon}-u\vert^{2}(x,\tau)\,dx\,+\int_{\Omega'\times I}\langle A_{\varepsilon}(x,t,Du_{\varepsilon})-A_{\varepsilon}(x,t,Du),Du_{\varepsilon}-Du\rangle\,dx\,dt \nonumber\\
 & \qquad \leq\int_{\Omega'\times I}\vert A(x,t,Du)-A_{\varepsilon}(x,t,Du)\vert\vert Du_{\varepsilon}-Du\vert\,dx\,dt+\vert I\vert\int_{(\Omega'\times I)\cap\{ \vert f\vert > \frac{1}{\varepsilon}\} }\vert f\vert^{2}\,dx\,dt.
\end{align}
In order to estimate the first integral on the right-hand side of
\eqref{eq:est12}, we now distinguish whether or not $p\ge 2$. If $p\ge 2$,
then we have 
\[
A_{\varepsilon}(x,t,Du)=A(x,t,Du)+\varepsilon (1+\vert Du\vert^{2})^{\frac{p-2}{2}}Du,
\]
which implies 
\begin{align*}
& \frac{1}{4} \,\underset{\tau\in I}{\sup}\int_{\Omega'}\vert u_{\varepsilon}-u\vert^{2}(x,\tau)\,dx\,+\int_{\Omega'\times I}\langle A_{\varepsilon}(x,t,Du_{\varepsilon})-A_{\varepsilon}(x,t,Du),Du_{\varepsilon}-Du\rangle\,dx\,dt\\
 & \qquad \leq\varepsilon\int_{\Omega'\times I}(1+\vert Du\vert^{2})^{\frac{p-2}{2}}\vert Du\vert\vert Du_{\varepsilon}-Du\vert\,dx\,dt+\vert I\vert\int_{(\Omega'\times I)\cap\{ \vert f\vert > \frac{1}{\varepsilon}\} }\vert f\vert^{2}\,dx\,dt.
\end{align*}
In what follows, we will denote by $C$ a general positive constant
that only depends on $p$, $C_{1}$ and $\delta$. Using the previous estimate, Lemma \ref{lem:Glemma} with $\nu:=\frac{\hat{n}}{2(\hat{n}+2)}<\frac{1}{2}$ and Young's inequality
with exponents $(p,\frac{p}{p-1})$,
we have
\begin{align} 
& \frac{1}{4} \,\underset{\tau\in I}{\sup}\int_{\Omega'}\vert u_{\varepsilon}-u\vert^{2}(x,\tau)\,dx+\int_{\Omega'\times I}\vert G_{\delta}(Du_{\varepsilon})-G_{\delta}(Du)\vert^{p}\,dx\,dt\nonumber\\
 & \qquad \leq C \varepsilon^{\nu}\int_{\Omega'\times I}(\max \{\vert Du\vert,1+\delta\})^{p}\,dx\,dt+\varepsilon^{\nu}\int_{\Omega'\times I}\vert Du_{\varepsilon}-Du\vert^{p}\,dx\,dt\nonumber\\
 & \qquad \qquad +
 C \vert I\vert\int_{(\Omega'\times I)\cap\{ \vert f\vert > \frac{1}{\varepsilon}\} }\vert f\vert^{2}\,dx\,dt,
\label{eq:est15}
\end{align}
where we have also exploited the facts that $\varepsilon<\varepsilon^{1-\nu}$ and $\varepsilon^{1-\nu}<\varepsilon^{\nu}$, since $0<\varepsilon<\frac{1}{2}$. 

In the case $1<p<2$, we need to use (\ref{eq:lab1}) and (\ref{eq:lab2}),
which imply 
\[
\vert A_{\varepsilon}(x,t,\xi)-A(x,t,\xi)\vert \leq \frac{\vert\partial_{s}F_{\varepsilon}(x,t,\vert\xi\vert)-\partial_{s}F(x,t,\vert\xi\vert)\vert}{\vert\xi\vert}+\varepsilon (1+\vert\xi\vert^{2})^{\frac{p-2}{2}}\vert\xi\vert\leq
\varepsilon(2^{p} C_{1}+1) \vert\xi\vert^{p-1}
\]
for every $\xi\in\mathbb{R}^{Nn}\setminus\{0\}$ and for almost every
$(x,t)\in\Omega'\times I$. Using the above estimate with $\xi=Du$
and arguing as in the case $p>2$, we now obtain from \eqref{eq:est12} that
\begin{align} 
& \frac{1}{4} \,\underset{\tau\in I}{\sup}\int_{\Omega'}\vert u_{\varepsilon}-u\vert^{2}(x,\tau)\,dx+\int_{\Omega'\times I}\vert G_{\delta}(Du_{\varepsilon})-G_{\delta}(Du)\vert^{p}\,dz\nonumber\\
 & \qquad\leq C \varepsilon^{\nu}\int_{\Omega'\times I}(\max \{\vert Du\vert,1+\delta\})^{p}\,dz+\varepsilon^{\nu}\int_{\Omega'\times I}\vert Du_{\varepsilon}-Du\vert^{p}\,dz\nonumber\\
 & \qquad \qquad +
 C \vert I\vert\int_{(\Omega'\times I)\cap\{ \vert f\vert > \frac{1}{\varepsilon}\} }\vert f\vert^{2}\,dz
\label{eq:est16}
\end{align}
for any $0<\varepsilon<\min \{\frac{1}{2},(\frac{\delta}{4})^{p-1}\}$.
Now, from the proof of \cite[Lemma 2.3]{BDGP} we have that 
\[
\vert Du_{\varepsilon}-Du\vert^{p} \leq\left(\frac{\delta+2}{\delta}\right)^{p}\vert G_{\delta}(Du_{\varepsilon})-G_{\delta}(Du)\vert^{p}
\]
if either $\vert Du_{\varepsilon}\vert>1+\delta$ or $\vert Du\vert>1+\delta$.
Using this information to estimate the right-hand side of both \eqref{eq:est15}
and \eqref{eq:est16}, for every $p>1$ we get
\begin{align*}
&\frac{1}{4} \,\underset{\tau\in I}{\sup}\int_{\Omega'}\vert u_{\varepsilon}-u\vert^{2}(x,\tau)\,dx+\int_{\Omega'\times I}\vert G_{\delta}(Du_{\varepsilon})-G_{\delta}(Du)\vert^{p}\,dz\\
&\qquad\leq C \varepsilon^{\nu}\int_{\Omega'\times I}(\max \{\vert Du\vert,1+\delta\})^{p}\,dz+\varepsilon^{\nu} \,2^{p} \,\vert\Omega'\times I\vert (1+\delta)^{p}\\
&\qquad\qquad +
\varepsilon^{\nu} \left(\frac{\delta+2}{\delta}\right)^{p}\int_{\Omega'\times I}\vert G_{\delta}(Du_{\varepsilon})-G_{\delta}(Du)\vert^{p}\,dz+C \vert I\vert\int_{(\Omega'\times I)\cap\{ \vert f\vert > \frac{1}{\varepsilon}\} }\vert f\vert^{2}\,dz\\
&\qquad\leq (2^{p}+C) \varepsilon^{\nu}\int_{\Omega'\times I}(\max \{\vert Du\vert,1+\delta\})^{p}\,dz+\varepsilon^{\nu}\left(\frac{\delta+2}{\delta}\right)^{p}\int_{\Omega'\times I}\vert G_{\delta}(Du_{\varepsilon})-G_{\delta}(Du)\vert^{p}\,dz\\
&\qquad \qquad +
C \vert I\vert\int_{(\Omega'\times I)\cap\{ \vert f\vert > \frac{1}{\varepsilon}\} }\vert f\vert^{2}\,dz.
\end{align*}
At this point, notice that 
\begin{equation}
\varepsilon^{\nu}\left(\frac{\delta+2}{\delta}\right)^{p} \searrow0 \qquad \mathrm{as} \qquad \varepsilon\searrow0,\label{eq:limit1}
\end{equation}
since $\nu:=\frac{\hat{n}}{2(\hat{n}+2)}>0$. Also, recall that $\varepsilon$
is small enough to have $0<\varepsilon<\min \{\frac{1}{2},(\frac{\delta}{4})^{p-1}\}$.
From this and from (\ref{eq:limit1}) it follows that there exists
a constant $\widetilde{C}\equiv\widetilde{C}(p,\hat{n},\delta)\in(0,\min \{\frac{1}{2},(\frac{\delta}{4})^{p-1}\})$
such that, for every $\varepsilon\in(0,\widetilde{C})$, we have 
\[
\varepsilon^{\nu}\left(\frac{\delta+2}{\delta}\right)^{p}\leq \frac{1}{2}\,. 
\]
This allows to absorb the second term on the right-hand side of the last estimate into the left-hand side, so that we finally obtain the desired conclusion. 
\end{proof}

We are now in a position to prove the uniqueness
of weak solutions to the Cauchy-Dirichlet problem
\begin{equation}
\begin{cases}
\partial_{t}u-\dive A(x,t,Du)=f & \mathrm{in}\,\,\, \Omega_{T},\\
u=g & \mathrm{on}\,\,\, \partial_{\mathrm{par}}\Omega_{T},
\end{cases}\label{eq:Cdp}
\end{equation}
where $\Omega$ is a bounded domain in $\mathbb{R}^{n}$, $g\in C^{0}([0,T];L^{2}(\Omega$,$\mathbb{R}^{N}))\cap L^{p}(0,T;W^{1,p}(\Omega,\mathbb{R}^{N}))$
and $f\in L^{2}(\Omega_{T},\mathbb{R}^{N})$. 

\begin{thm}\label{thm:THunique} 
Let $\Omega\subset\mathbb{R}^{n}$
be a bounded domain and let $f\in L^{2}(\Omega_{T},\mathbb{R}^{N})$.
Then, the Cauchy-Dirichlet problem $(\ref{eq:Cdp})$ admits at most
one weak solution. 
\end{thm}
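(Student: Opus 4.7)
The plan is to leverage the strong $L^{2}$-convergence of the approximating solutions provided by Lemma~\ref{lem:Lpconv}, combined with the uniqueness of weak solutions for the regularized problem $(\ref{eq:CAUCHYDIR})$. Let $u_{1}, u_{2}$ be two weak solutions of $(\ref{eq:Cdp})$ sharing the boundary datum $g$ and inhomogeneity $f$. For each $\varepsilon \in (0, \tfrac{1}{2})$ and each $i \in \{1, 2\}$, I would denote by $u_{i}^{\varepsilon}$ the unique energy solution of $(\ref{eq:CAUCHYDIR})$ with $Q' = \Omega_{T}$, truncated datum $f_{\varepsilon}$, and boundary datum $u_{i}$; existence and uniqueness of $u_{i}^{\varepsilon}$ are guaranteed by the classical theory applicable to the regularized system, which satisfies standard $p$-growth and strict monotonicity conditions (cf.\ Remark~\ref{rmk2} and Lemma~\ref{lem:lemma app1}).

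The key observation is that since $u_{1} = u_{2} = g$ on $\partial_{\mathrm{par}}\Omega_{T}$ (both in the Dirichlet sense on the lateral boundary and in the $L^{2}$-sense at $t = 0$), the two Cauchy-Dirichlet problems defining $u_{1}^{\varepsilon}$ and $u_{2}^{\varepsilon}$ have identical data; in particular the inhomogeneities $f_{\varepsilon}$ coincide, as they depend only on $f$. Uniqueness for the regularized problem then yields
\[
u_{1}^{\varepsilon} \equiv u_{2}^{\varepsilon} \qquad \text{in }\Omega_{T}, \qquad \text{for every } \varepsilon\in(0,\tfrac12).
\]
Next, I would apply Lemma~\ref{lem:Lpconv} to each pair $(u_{i}, u_{i}^{\varepsilon})$ with any fixed $\delta > 0$, then pass to the limit $\varepsilon \to 0^{+}$. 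Both terms on the right-hand side of the estimate in that lemma vanish in the limit: the first because $Du_{i} \in L^{p}(\Omega_{T}, \mathbb{R}^{Nn})$ forces $\int_{\Omega_{T}}(\max\{|Du_{i}|, 1+\delta\})^{p}\,dz < \infty$, so the prefactor $\varepsilon^{\hat{n}/(2(\hat{n}+2))}$ drives the term to zero, and the second by absolute continuity of the Lebesgue integral applied to $|f|^{2}$ on the shrinking sets $\{|f| > 1/\varepsilon\}$, using that $f \in L^{2}(\Omega_{T}, \mathbb{R}^{N})$.

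This gives $u_{i}^{\varepsilon} \to u_{i}$ in $C^{0}([0,T]; L^{2}(\Omega, \mathbb{R}^{N}))$ for each $i \in \{1,2\}$. Finally, for every $\tau \in (0, T)$, the triangle inequality together with $u_{1}^{\varepsilon} = u_{2}^{\varepsilon}$ yields
\[
\|u_{1}(\cdot, \tau) - u_{2}(\cdot, \tau)\|_{L^{2}(\Omega)} \leq \|u_{1}(\cdot, \tau) - u_{1}^{\varepsilon}(\cdot, \tau)\|_{L^{2}(\Omega)} + \|u_{2}^{\varepsilon}(\cdot, \tau) - u_{2}(\cdot, \tau)\|_{L^{2}(\Omega)},
\]
and the right-hand side tends to zero uniformly in $\tau$ as $\varepsilon \to 0^{+}$, so $u_{1} = u_{2}$ almost everywhere in $\Omega_{T}$. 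No substantial obstacle remains since the serious analytic work has already been carried out in Lemma~\ref{lem:Lpconv}; the only delicate point is to ensure that the approximating problems genuinely share the same initial data, which is guaranteed by the $L^{2}$-sense agreement $u_{1}(\cdot, 0) = u_{2}(\cdot, 0) = g(\cdot, 0)$ built into the definition of weak solution.
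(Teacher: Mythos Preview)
Your proposal is correct and follows essentially the same approach as the paper: both arguments exploit that the regularized problem $(\ref{eq:CAUCHYDIR})$ depends on the original solution only through its parabolic boundary values, which are fixed to $g$, so that every weak solution of $(\ref{eq:Cdp})$ is the $L^2$-limit of the \emph{same} approximating sequence $u_\varepsilon$ by Lemma~\ref{lem:Lpconv}. The paper compresses your triangle-inequality step into the phrase ``by virtue of the uniqueness of limits in Lebesgue spaces and the uniqueness of the energy solutions to problem $(\ref{eq:CAUCHYDIR})$'', but the underlying idea is identical.
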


\begin{proof}[\textbf{{Proof}}]
Let $u\in C^{0}([0,T];L^{2}(\Omega,\mathbb{R}^{N}))\cap L^{p}(0,T;W^{1,p}(\Omega,\mathbb{R}^{N}))$
be a weak solution of (\ref{eq:Cdp}) and let $u_{\varepsilon}\in C^{0}([0,T];L^{2}(\Omega,\mathbb{R}^{N}))\cap L^{p}(0,T;W^{1,p}(\Omega,\mathbb{R}^{N}))$
be the unique weak solution to (\ref{eq:CAUCHYDIR}) with
$Q':=\Omega'\times I=\Omega_{T}$. Then, by Lemma \ref{lem:Lpconv},
for every $\delta>0$ there exists a constant $\widetilde{C}\in(0,\min \{\frac{1}{2},(\frac{\delta}{4})^{p-1}\})$
such that for any $\varepsilon\in(0,\widetilde{C})$ we have 
\begin{align}
\sup_{\tau\in(0,T)}\int_{\Omega}\vert u_{\varepsilon}-u\vert^{2}(x,\tau)\,dx  & 
\leq
C \varepsilon^{\frac{\hat{n}}{2(\hat{n}+2)}}\int_{\Omega_{T}}(\max\{|Du|,1+\delta\})^{p} \,dz\nonumber\\
 & \qquad+
 C T\int_{\Omega_{T} \cap \{ \vert f\vert > \frac{1}{\varepsilon}\} }\vert f\vert^{2}\,dz
\label{eq:est19}
\end{align}
for some positive constant $C$ that is independent of $\varepsilon$.
Notice that the right-hand side of \eqref{eq:est19} converges to
zero as $\varepsilon\rightarrow0^{+}$. Moreover, we have 
\begin{equation}
\int_{\Omega_{T}}|u_{\varepsilon}-u|^{2}\,dz \leq T\sup\limits _{\tau\in(0,T)}\int_{\Omega}|u_{\varepsilon}-u|^{2}(x,\tau)\,dx.\label{eq:est20}
\end{equation}
Combining estimates \eqref{eq:est19} and (\ref{eq:est20}), we infer
that $u_{\varepsilon}\rightarrow u$ strongly in $L^{2}(\Omega_{T},\mathbb{R}^{N})$
as $\varepsilon\rightarrow0^{+}$. This implies the uniqueness of
the weak solutions to problem (\ref{eq:Cdp}), by virtue of the uniqueness
of limits in Lebesgue spaces and the uniqueness of the energy solutions
to problem (\ref{eq:CAUCHYDIR}). 
\end{proof}

\section{Maximum principle for the homogeneous system\label{sec:comparison}}

In this section, we want to establish a
maximum principle for the homogeneous system in the case $1<p\leq\frac{2n}{n+2}$. For the proof, we need to assume that $f=0$ and note that the assumed boundedness of $u_\varepsilon$ in Remark \ref{rmk2} is implied by the maximum principle. Therefore, this assumption is not restrictive whenever $f \equiv 0$. By (\ref{eq:f_eps}), we then
have $f_{\varepsilon}=0$ for every $\varepsilon\in(0,\frac{1}{2})$.
Keeping in mind the notation introduced in Section \ref{sec:uniqueness}, we now
set 
\[
k:=\Vert u\Vert_{L^{\infty}(\Omega'\times I)}
\]
and denote by $w$ the $N$-dimensional vector whose components are
all equal to $k$. Notice that $w$ is a trivial solution of system
(\ref{eq:syst}) with $f=0$. Moreover, one can easily check that,
for every $i\in\{1,\ldots,N\}$ and for almost every $t\in I:=(t_{1},t_{2})$,
we have
\[
\left(u_{\varepsilon}^{i}(\cdot,t)-k\right)_{+},\left(u_{\varepsilon}^{i}(\cdot,t)+k\right)_{-}\in W_{0}^{1,p}(\Omega').
\]

Now, for $h\in(0,t_{2}-t_{1})$ we define 
\begin{equation}
\phi^{i}(x,t):=[(u_{\varepsilon}^{i}-k)_{+}]_{h}(x,t),\,\,\,\,\,\,\,\,\,i\in\{1,\ldots,N\}.\label{eq:tf2}
\end{equation}
Using the Steklov averages formulations of $(\ref{eq:CAUCHYDIR})_{1}$
and (\ref{eq:syst}) with $w$ instead of $u$, and arguing as in
the proof of Lemma \ref{lem:Lpconv} with (\ref{eq:tf2}) in place
of (\ref{eq:tf1}), one can easily obtain 
\[
\frac{1}{2}\int_{\Omega'}\vert(u_{\varepsilon}(x,\tau)-w)_{+}\vert^{2}\,dx+\int_{\Omega'\times(t_{1},\tau)}\langle A_{\varepsilon}(x,t,Du_{\varepsilon})-A_{\varepsilon}(x,t,Dw),D[(u_{\varepsilon}-w)_{+}]\rangle\,dx\,dt=0
\]
for every $\tau\in I$. Since $A_{\varepsilon}(x,t,\cdot)$ is a monotone
vector field and $Dw=A_{\varepsilon}(x,t,Dw)=0$, we can omit the latter integral, thus obtaining
\[
\int_{\Omega'}\vert(u_{\varepsilon}(x,\tau)-w)_{+}\vert^{2}\,dx = 0 \qquad \text{for every }\tau\in I.
\]
Therefore, for every $i\in\{1,\ldots,N\}$ and every $\varepsilon\in(0,\frac{1}{2})$,
we have 
\[
u_{\varepsilon}^{i}(x,\tau)\leq k\qquad \text{for almost every } (x,\tau)\in Q':=\Omega'\times I.
\]
Similarly, but using the function $(u_{\varepsilon}^{i}+k)_{-}$ instead
of $(u_{\varepsilon}^{i}-k)_{+}$ in (\ref{eq:tf2}), we get 
\[
u_{\varepsilon}^{i}(x,\tau) \geq-k
\]
for any $i\in\{1,\ldots,N\}$, for any $\varepsilon\in(0,\frac{1}{2})$
and for almost every $(x,\tau)\in Q'$. Thus, we can conclude that
\[
\Vert u_{\varepsilon}\Vert_{L^{\infty}(\Omega'\times I)}
\leq
\sqrt{N} k
=
\sqrt{N}\Vert u\Vert_{L^{\infty}(\Omega'\times I)}\qquad \text{for all } \varepsilon\in\left(0,\tfrac{1}{2}\right).
\]

\section{Weak differentiability\label{sec:weak diff}}

Here we derive
some higher differentiability results that will be useful in the following.
These results involve the spatial gradient of the weak solution to
problem (\ref{eq:CAUCHYDIR}) with $Q'=Q_{R}(z_{0})\Subset\Omega_{T}$.

To begin with, for each
$\gamma\geq0$ and $a>0$ we consider the function 
\begin{equation}\label{eq:bigphi}
\Phi_{\gamma,a}(w):= w^{2} (a+w)^{\gamma-2},\qquad w\geq0. 
\end{equation}
For this function, one can easily check that 
\begin{equation}
\Phi_{\gamma,a}'(w)\leq2 (\gamma+1) w (a+w)^{\gamma-2},\label{eq:ineqPhi}
\end{equation}
from which we can immediately deduce 
\begin{equation}
w \,\Phi'_{\gamma,a}(w)\leq2 (\gamma+1) \Phi_{\gamma,a}(w)\label{PhiDt}
\end{equation}
and 
\[
\Phi'_{\gamma,a}(w)\leq(2a)^{\gamma+1}(\gamma+1)\qquad\text{for every }w\in(0,a).
\]
Using these results, we obtain the following lemma.

\begin{lem}\label{lem:L3}
Let $p>1$,
$\varepsilon\in(0,\frac{1}{2})$, $\gamma\geq0$ and $a>0$. Moreover,
assume that 
\[
u_{\varepsilon}\in C^{0}\left([t_{0}-R^{2},t_{0}];L^{2}\left(B_{R}(x_{0}),\mathbb{R}^{N}\right)\right)\cap L^{p}\left(t_{0}-R^{2},t_{0};W^{1,p}\left(B_{R}(x_{0}),\mathbb{R}^{N}\right)\right)
\]
is the unique energy solution of problem $(\ref{eq:CAUCHYDIR})$ with
$Q'=Q_{R}(z_{0}):=B_{R}(x_{0})\times(t_{0}-R^{2},t_{0})\Subset\Omega_{T}$.
Then 
\[
\Phi_{\gamma,a}((|Du_{\varepsilon}|-a)_{+}) \in L_{loc}^{2}\left(t_{0}-R^{2},t_{0};W_{loc}^{1,2}\left(B_{R}(x_{0})\right)\right).
\]
In the case $1<p\leq\frac{2n}{n +2}$ we additionally assume that $u_{\varepsilon}\in L_{loc}^{\infty}\left(Q_{R}(z_{0}),\mathbb{R}^{N}\right)$.
\end{lem}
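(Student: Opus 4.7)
The plan is to view $\Phi_{\gamma,a}((|Du_\varepsilon|-a)_+)$ as a triple composition starting from $Du_\varepsilon$, and to apply the chain rule in Sobolev spaces at each stage, relying essentially on the higher differentiability output of Theorem~\ref{thm:regul}. That theorem provides the crucial input: $Du_\varepsilon$ is locally essentially bounded on $Q_R(z_0)$ and admits locally square integrable weak spatial derivatives. This is exactly why the subcritical regime $1<p\le\frac{2n}{n+2}$ requires the extra $L^\infty_{loc}$ hypothesis on $u_\varepsilon$, which is precisely the additional assumption placed in the statement of the lemma.

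With this in hand, I would first invoke the chain rule for the $1$-Lipschitz map $\xi\mapsto|\xi|$ to conclude that $|Du_\varepsilon|\in L^2_{loc}(t_0-R^2,t_0;W^{1,2}_{loc}(B_R(x_0)))$, using the standard pointwise bound $|D|Du_\varepsilon||\le|D^2 u_\varepsilon|$ almost everywhere. Next, Stampacchia's truncation theorem applied to the $1$-Lipschitz function $w\mapsto(w-a)_+$ yields the same regularity for $(|Du_\varepsilon|-a)_+$, with weak spatial gradient equal to $\chi_{\{|Du_\varepsilon|>a\}}\,D|Du_\varepsilon|$ a.e.

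Finally, I would verify that
$$\Phi'_{\gamma,a}(w) = w(a+w)^{\gamma-3}(2a+\gamma w)$$
extends continuously to $w=0$, so that $\Phi_{\gamma,a}\in C^1([0,\infty))$ and is bounded on bounded subsets of $[0,\infty)$; recall also the estimate \eqref{eq:ineqPhi} already recorded above. Since $(|Du_\varepsilon|-a)_+$ is locally essentially bounded by Theorem~\ref{thm:regul}, a last application of the Sobolev chain rule gives the desired claim, with spatial gradient
$$D\Phi_{\gamma,a}\bigl((|Du_\varepsilon|-a)_+\bigr) = \Phi'_{\gamma,a}\bigl((|Du_\varepsilon|-a)_+\bigr)\,D(|Du_\varepsilon|-a)_+,$$
which belongs to $L^2_{loc}$ because the first factor is bounded and the second factor is in $L^2_{loc}$. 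No serious obstacle arises here: the argument is a clean cascade of Sobolev chain rules, and the only delicate point is to ensure that the prerequisites of Theorem~\ref{thm:regul} hold in every regime of $p$, which is precisely what motivates the additional boundedness assumption imposed in the subcritical case.
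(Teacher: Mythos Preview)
Your proposal is correct and follows essentially the same approach as the paper: both rely on Theorem~\ref{thm:regul} to supply $Du_\varepsilon\in L^2_{loc}W^{1,2}_{loc}\cap L^\infty_{loc}$, and both use the chain rule in Sobolev spaces together with the local boundedness of $Du_\varepsilon$ to handle the fact that $\Phi_{\gamma,a}$ is not globally Lipschitz. The only cosmetic difference is that the paper introduces an explicit truncation $\Phi_{\gamma,a,m}:=\min\{\Phi_{\gamma,a},m\}$ (which is globally Lipschitz) and then removes $m$ on compact sets via $Du_\varepsilon\in L^\infty_{loc}$, whereas you argue directly that $\Phi_{\gamma,a}\in C^1([0,\infty))$ with locally bounded derivative and invoke the chain rule for locally Lipschitz outer functions composed with bounded Sobolev maps; these are two phrasings of the same idea.
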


\begin{proof}[\textbf{{Proof}}]
Notice that for $m\in\mathbb{R}^{+}$
the function 
\[
\Phi_{\gamma,a,m}(w):=\min \{\Phi_{\gamma,a}(w),m\},\qquad w>0,
\]
is Lipschitz continuous. By Theorem \ref{thm:regul} (see also Remark
\ref{rmk2} for the case $1<p\leq\frac{2 n}{n +2}$), we have that 
\[
Du_{\varepsilon}\in L_{loc}^{2}(t_{0}-R^{2},t_{0};W_{loc}^{1,2}(B_{R}(x_{0}),\mathbb{R}^{Nn})).
\]
Then, by virtue of the chain rule in Sobolev spaces, we obtain 
\[
\Phi_{\gamma,a,m}((|Du_{\varepsilon}|-a)_{+}) \in L_{loc}^{2}(t_{0}-R^{2},t_{0};W_{loc}^{1,2}(B_{R}(x_{0})))
\]
along with 
\[
D\big(\Phi_{\gamma,a,m}((|Du_{\varepsilon}|-a)_{+})\big) = \Phi'_{\gamma,a,m}((|Du_{\varepsilon}|-a)_{+}) \frac{D^{2}u_{\varepsilon}\cdot Du_{\varepsilon}}{|Du_{\varepsilon}|}  \mathbb{\mathds{1}}_{\{a < |Du_{\varepsilon}| < m\}}.
\]
Theorem \ref{thm:regul} also tells us that $Du_{\varepsilon}\in L_{loc}^{\infty}(Q_{R}(z_{0}),\mathbb{R}^{Nn})$.
This implies that for every compact subset $\mathcal{K}$ of $Q_{R}(z_{0})$
there exists a positive number $m=m_{\mathcal{K}}$ such that 
\[
\Phi_{\gamma,a,m}((|Du_{\varepsilon}|-a)_{+}) = \Phi_{\gamma,a}((|Du_{\varepsilon}|-a)_{+}) \qquad \text{in }  \mathcal{K}.
\]
This is sufficient to prove the assertion. Moreover, we have 
\[
D(\Phi_{\gamma,a}((|Du_{\varepsilon}|-a)_{+})) = \Phi'_{\gamma,a}((|Du_{\varepsilon}|-a)_{+}) \frac{D^{2}u_{\varepsilon}\cdot Du_{\varepsilon}}{|Du_{\varepsilon}|}  \mathbb{\mathds{1}}_{\{|Du_{\varepsilon}| > a\}}.
\]
\end{proof}

Now we focus on
the map 
\[
(x,t)\in\Omega_{T} \longmapsto A_{\varepsilon}(x,t,Du_{\varepsilon}(x,t))\in\mathbb{R}^{Nn}.
\]
Using the notation (\ref{eq:molli}), we
obtain the following result:

\begin{lem}\label{lem:L4} 
Let $p>1$,
$\delta>0$ and $0<\varepsilon<\min \{\frac{1}{2},(\frac{\delta}{4})^{p-1}\}$.
Moreover, assume that 
\[
u_{\varepsilon}\in C^{0}\left([t_{0}-R^{2},t_{0}];L^{2}\left(B_{R}(x_{0}),\mathbb{R}^{N}\right)\right)\cap L^{p}\left(t_{0}-R^{2},t_{0};W^{1,p}\left(B_{R}(x_{0}),\mathbb{R}^{N}\right)\right)
\]
is the unique energy solution of problem $(\ref{eq:CAUCHYDIR})$ with
$Q'=Q_{R}(z_{0}):=B_{R}(x_{0})\times(t_{0}-R^{2},t_{0})\Subset\Omega_{T}$.
Then 
\[
A_{\varepsilon}(\cdot,Du_{\varepsilon}) \in L_{loc}^{2}\left(t_{0}-R^{2},t_{0};W_{loc}^{1,2}(B_{R}(x_{0}),\mathbb{R}^{Nn})\right).
\]
In the case $1<p\leq\frac{2n}{n +2}$ we additionally assume that $u_{\varepsilon}\in L_{loc}^{\infty}\left(Q_{R}(z_{0}),\mathbb{R}^{N}\right)$. 
\end{lem}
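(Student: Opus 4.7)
The plan is to derive the desired regularity from the chain rule for Sobolev maps, combined with Theorem~\ref{thm:regul}. Fix an arbitrary compact parabolic subcylinder $\mathcal{Q}:=\mathcal{U}\times J \Subset Q_R(z_0)$. By Theorem~\ref{thm:regul} (together with the assumption $u_\varepsilon \in L^\infty_{loc}$ in the subcritical case), we have $Du_\varepsilon \in L^2(J; W^{1,2}(\mathcal{U}))$ with a uniform bound $|Du_\varepsilon|\leq M$ a.e.\ on $\mathcal{Q}$, for some finite constant $M>0$.

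On the restricted set $\mathcal{U}\times J\times \{|\xi|\leq M+1\}$, the map $(x,\xi)\mapsto A_\varepsilon(x,t,\xi)$ is Lipschitz in both variables, with Lipschitz constant depending only on $p$, $C_1$, $K$, $\varepsilon$ and $M$: the $x$-Lipschitzness is a direct consequence of $(\mathrm{H_{4}})$ and the very definition of $A_\varepsilon$, see \eqref{eq:spgraAeps}, while the $\xi$-Lipschitzness is delivered by Lemma~\ref{lem:lemma app2}. Applying the Sobolev chain rule for locally Lipschitz compositions, the map $x\mapsto A_\varepsilon(x,t,Du_\varepsilon(x,t))$ then belongs to $W^{1,2}(\mathcal{U})$ for a.e.\ $t\in J$, and its weak spatial derivative is given componentwise by
\[
D_\alpha\big[A_\varepsilon(\cdot,t,Du_\varepsilon)\big] = \partial_{x_\alpha} A_\varepsilon(x,t,Du_\varepsilon) + \sum_{i,\beta}\partial_{\xi^i_\beta}A_\varepsilon(x,t,Du_\varepsilon)\,\partial_\alpha\partial_\beta u_\varepsilon^i.
\]

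To conclude, I estimate the two contributions in $L^2(\mathcal{Q})$ separately. The first term is bounded pointwise by $2^{p-1}K(1+M)^{p-1}$ by \eqref{eq:spgraAeps}, hence it lies in $L^\infty(\mathcal{Q})$. The second term is bounded by $C(\varepsilon,p,C_1)(1+M^2)^{(p-2)/2}|D^2 u_\varepsilon|$ by Lemma~\ref{lem:lemma app2}, and the factor $|D^2 u_\varepsilon|$ belongs to $L^2(\mathcal{Q})$ once again by Theorem~\ref{thm:regul}. Integrating in time yields the desired membership $A_\varepsilon(\cdot,Du_\varepsilon)\in L^2(J; W^{1,2}(\mathcal{U}))$; the arbitrariness of $\mathcal{Q}\Subset Q_R(z_0)$ then gives the assertion.

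The main obstacle is the rigorous justification of the chain rule, since $A_\varepsilon$ is only a.e.\ differentiable in $x$ (through $(\mathrm{H_{4}})$ combined with Rademacher's theorem). A clean way around this is to mollify $A_\varepsilon$ jointly in the $x$- and $\xi$-variables: the smoothed versions obey the classical chain rule, satisfy the pointwise bounds above uniformly in the mollification parameter, and the limit is handled by dominated convergence, exploiting the uniform bound $|Du_\varepsilon|\leq M$ on $\mathcal{Q}$.
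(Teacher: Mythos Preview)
Your proof is correct and follows essentially the same approach as the paper: both rest on Theorem~\ref{thm:regul} (local $L^\infty$-bound on $Du_\varepsilon$ and $L^2_{loc}$ second derivatives), then justify the chain rule via the bounds~\eqref{eq:spgraAeps} and Lemma~\ref{lem:lemma app2}, arriving at the same weak-derivative formula. The only technical difference lies in the mollification device: the paper mollifies the \emph{argument}, working with $A_\varepsilon(x,t,(Du_\varepsilon)_\varrho)$, integrating by parts, and passing to the limit $\varrho\searrow 0$ via generalized dominated convergence, whereas you mollify the \emph{outer map} $A_\varepsilon$ in $(x,\xi)$; either route closes the argument.
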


\begin{proof}[\textbf{{Proof}}]
Integrating by parts, for $0<\varrho\ll1$, for every $\phi\in C_{0}^{\infty}(Q_{R}(z_{0}))$
and every $i\in\{1,\ldots,n\}$ we obtain 
\begin{align}\int_{Q_{R}(z_{0})}A_{\varepsilon}(x,t,(Du_{\varepsilon})_{\varrho}) D_{i}\phi\,dz= &  -\int_{Q_{R}(z_{0})}(D_{x_{i}}A_{\varepsilon})(x,t,(Du_{\varepsilon})_{\varrho})\, \phi\,dz\nonumber\\
 & \qquad -\int_{Q_{R}(z_{0})}(D_{\xi}A_{\varepsilon})(x,t,(Du_{\varepsilon})_{\varrho})\cdot D_{i}(Du_{\varepsilon})_{\varrho}\, \phi\,dz.
\label{eq:est21}
\end{align}
In order to pass to the limit as $\varrho\searrow0$ under the integral signs, we need to estimate the above integrands. From
(\ref{eq:gro1}) it follows that 
\begin{align}
|A_{\varepsilon}(x,t,(Du_{\varepsilon})_{\varrho})| |D_{i}\phi| &  \leq c (1+|(Du_{\varepsilon})_{\varrho}|^{2})^{\frac{p-1}{2}} |D_{i}\phi|\label{eq:est22}
\end{align}
for some positive constant $c\equiv c(p,C_{1},\varepsilon)$. As for
the first integrand on the right-hand side of \eqref{eq:est21},
thanks to (\ref{eq:spgraAeps}) we get 
\begin{align}
|(D_{x_{i}}A_{\varepsilon})(x,t,(Du_{\varepsilon})_{\varrho})| |\phi| &  \leq 2^{p-1} K (1+|(Du_{\varepsilon})_{\varrho}|)^{p-1}|\phi|.\label{eq:est23}
\end{align}
Moreover, by Lemma \ref{lem:lemma app2} we obtain 
\begin{align}
|(D_{\xi}A_{\varepsilon})(x,t,(Du_{\varepsilon})_{\varrho})\cdot D_{i}(Du_{\varepsilon})_{\varrho}| |\phi| &  \leq C (1+|(Du_{\varepsilon})_{\varrho}|^{2})^{\frac{p-2}{2}} |D^{2}(u_{\varepsilon})_{\varrho}| |\phi|,\label{eq:est24}
\end{align}
where $C\equiv C(p,C_{1},\varepsilon)>0$. Now we use a well-known
result on the convergence of mollified functions to deduce that 
\[
(Du_{\varepsilon})_{\varrho}\rightarrow Du_{\varepsilon}\quad\mathrm{in}\quad L_{loc}^{2}(t_{0}-R^{2},t_{0};W_{loc}^{1,2}(B_{R}(x_{0}),\mathbb{R}^{Nn}))\cap L^{p}(Q_{R}(z_{0}),\mathbb{R}^{Nn}), \quad \mathrm{as} \quad \varrho\rightarrow0^{+}.
\]
Combining this with estimates (\ref{eq:est22})$-$(\ref{eq:est24})
and applying the Generalized Lebesgue's
Dominated Convergence Theorem (see \cite[page 92, Theorem 17]{Roy}) to both sides of \eqref{eq:est21},
we find that 
\begin{align}\int_{Q_{R}(z_{0})}A_{\varepsilon}(x,t,Du_{\varepsilon}) D_{i}\phi\,dz= 
& -
\int_{Q_{R}(z_{0})}(D_{x_{i}}A_{\varepsilon})(x,t,Du_{\varepsilon}) \phi\,dz
\nonumber\\& \qquad -
\int_{Q_{R}(z_{0})}(D_{\xi}A_{\varepsilon})(x,t,Du_{\varepsilon})\cdot D_{i}Du_{\varepsilon} \,\phi\,dz
\label{eq:est25}
\end{align}
for every $\phi\in C_{0}^{\infty}(Q_{R}(z_{0}))$ and every $i\in\{1,\ldots,n\}$.
This implies the assertion. In addition, from \eqref{eq:est25} we
obtain that 
\[
D_{i}A_{\varepsilon}(x,t,Du_{\varepsilon})=(D_{x_{i}}A_{\varepsilon})(x,t,Du_{\varepsilon})+(D_{\xi}A_{\varepsilon})(x,t,Du_{\varepsilon})\cdot D_{i}Du_{\varepsilon} \qquad \text{for every }i\in\{1,\ldots,n\}.
\]
\end{proof}

For further needs,
we now introduce the auxiliary function $H_{\lambda}:\mathbb{R}^{Nn}\rightarrow\mathbb{R}^{+}$
defined by 
\[
H_{\lambda}(\xi):=\max \{1+\lambda,|\xi|\},
\]
where $\lambda>0$ is a parameter. For this function we record the
following result, whose proof is omitted, since it is similar to that
of Lemma \ref{lem:L3}.

\begin{lem}
Let $p>1$, $\varepsilon\in(0,\frac{1}{2})$
and $\lambda>0$. Moreover, assume that 
\[
u_{\varepsilon}\in C^{0}\left([t_{0}-R^{2},t_{0}];L^{2}\left(B_{R}(x_{0}),\mathbb{R}^{N}\right)\right)\cap L^{p}\left(t_{0}-R^{2},t_{0};W^{1,p}\left(B_{R}(x_{0}),\mathbb{R}^{N}\right)\right)
\]
is the unique energy solution of problem $(\ref{eq:CAUCHYDIR})$ with
$Q'=Q_{R}(z_{0}):=B_{R}(x_{0})\times(t_{0}-R^{2},t_{0})\Subset\Omega_{T}$.
Then 
\[
H_{\lambda}(Du_{\varepsilon}) \in L_{loc}^{2}\left(t_{0}-R^{2},t_{0};W_{loc}^{1,2}\left(B_{R}(x_{0})\right)\right).
\]
In the case $1<p\leq\frac{2n}{n +2}$ we additionally assume that $u_{\varepsilon}\in L_{loc}^{\infty}\left(Q_{R}(z_{0}),\mathbb{R}^{N}\right)$.
\end{lem}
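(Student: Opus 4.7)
The plan is to mimic the proof of Lemma \ref{lem:L3}, exploiting the fact that $H_\lambda$ is a bounded-from-below, $1$-Lipschitz function of $\xi$. Indeed, one can write $H_\lambda(\xi) = (1+\lambda) + (|\xi| - (1+\lambda))_+$, which makes it evident that $|H_\lambda(\xi) - H_\lambda(\eta)| \leq |\xi - \eta|$, that $H_\lambda(Du_\varepsilon) \equiv 1+\lambda$ on the set $\{|Du_\varepsilon| \leq 1+\lambda\}$, and that $H_\lambda(Du_\varepsilon) = |Du_\varepsilon|$ on the complementary set.

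First, under the stated assumptions (including the extra hypothesis $u_\varepsilon \in L^\infty_{loc}$ in the subcritical regime $1 < p \leq \tfrac{2n}{n+2}$), Theorem \ref{thm:regul} (see also Remark \ref{rmk2}) yields
\[
Du_\varepsilon \in L^2_{loc}\bigl(t_0 - R^2, t_0; W^{1,2}_{loc}(B_R(x_0), \mathbb{R}^{Nn})\bigr) \cap L^\infty_{loc}\bigl(Q_R(z_0), \mathbb{R}^{Nn}\bigr).
\]
To apply the chain rule in Sobolev spaces I would proceed by truncation, as in the proof of Lemma \ref{lem:L3}: for $m > 1 + \lambda$ set $H_{\lambda,m}(\xi) := \min\{H_\lambda(\xi), m\}$, which is bounded and Lipschitz on $\mathbb{R}^{Nn}$. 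The standard chain rule then gives $H_{\lambda,m}(Du_\varepsilon) \in L^2_{loc}(t_0-R^2,t_0;W^{1,2}_{loc}(B_R(x_0)))$ together with
\[
D\bigl(H_{\lambda,m}(Du_\varepsilon)\bigr) = \frac{D^2 u_\varepsilon \cdot Du_\varepsilon}{|Du_\varepsilon|}\, \mathds{1}_{\{1+\lambda < |Du_\varepsilon| < m\}}.
\]

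To remove the truncation, I would invoke the local $L^\infty$-bound for $Du_\varepsilon$: for each compact set $\mathcal{K} \Subset Q_R(z_0)$, there exists $m = m_\mathcal{K}$ large enough so that $|Du_\varepsilon| < m$ a.e.\ in $\mathcal{K}$, and therefore $H_{\lambda,m}(Du_\varepsilon) = H_\lambda(Du_\varepsilon)$ on $\mathcal{K}$. This local identification suffices to conclude the claimed $W^{1,2}_{loc}$-regularity in space, and yields the explicit identity
\[
D\bigl(H_\lambda(Du_\varepsilon)\bigr) = \frac{D^2 u_\varepsilon \cdot Du_\varepsilon}{|Du_\varepsilon|}\, \mathds{1}_{\{|Du_\varepsilon| > 1+\lambda\}}
\]
almost everywhere in $Q_R(z_0)$.

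I do not anticipate any real obstacle: the only subtlety is the justification of the chain rule for the non-smooth cut-off function $H_\lambda$, and this is handled exactly as in Lemma \ref{lem:L3}. Once the truncation is in place, the proof is a verbatim adaptation, so the argument is purely routine.
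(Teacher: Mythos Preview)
Your proposal is correct and follows exactly the route the paper indicates: the paper omits the proof, stating only that it is similar to that of Lemma~\ref{lem:L3}, and your argument is precisely that adaptation (truncate $H_\lambda$ to a bounded Lipschitz function, apply the chain rule using Theorem~\ref{thm:regul}, then remove the truncation via the local $L^\infty$-bound on $Du_\varepsilon$). There is nothing to add.
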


\section{Local boundedness of \texorpdfstring{$Du_{\varepsilon}$}{Du}}\label{sec:Local}

As before, by $u$ we denote a weak solution of (\ref{eq:syst}) and we let $u_\epsilon$ be the unique weak solution of
the regularized problem $(\ref{eq:CAUCHYDIR})$ with $Q'=Q_{R}(z_{0}):=B_{R}(x_{0})\times(t_{0}-R^{2},t_{0})\Subset\Omega_{T}$.
Our aim in this section is to establish local $L^{\infty}$ estimates for $Du_{\varepsilon}$ with constants independent of $\varepsilon$. We will
achieve this result by using the Moser iteration technique, which
is based on Caccioppoli-type inequalities. We will obtain this kind
of inequalities by first differentiating the system of differential
equations, and then testing the resulting equation with a suitable
power of the weak solution itself. The groundwork for doing this has
been laid in Section \ref{sec:weak diff}.

To move forward, we now fix $\delta>0$ and $\gamma\geq0$.
Moreover, to shorten our notation we set 
\[
a:=1+\frac{\delta}{2}\qquad ,\qquad b:=1+\delta ,
\]
and we drop the subscript $\varepsilon$ for the weak solution $u_{\varepsilon}$
and the subscripts $\gamma,a$ for the function $\Phi_{\gamma,a}$
defined in (\ref{eq:bigphi}). Therefore, from now on we will simply
write $u$ and $\Phi$ in place of $u_{\varepsilon}$ and $\Phi_{\gamma,a}$
respectively, unless otherwise stated. To simplify our notation even
more, we additionally introduce the function $P:Q_{R}(z_{0})\rightarrow\mathbb{R}_{0}^{+}$
defined by 
\[
P:=(\vert Du\vert-a)_{+} ,
\]
and its ``mollified'' version 
\begin{equation}
P_{\varrho}:=(\vert Du_{\varrho}\vert-a)_{+} ,\qquad \varrho>0,\label{eq:P_rho}
\end{equation}
with an intentional abuse of the notation (\ref{eq:molli}) on the
left-hand side of (\ref{eq:P_rho}). By the elementary properties
of Sobolev functions, the weak spatial gradient of $P$ is given by
\begin{equation}
DP = \frac{D^{2}u\cdot Du}{|Du|}  \mathds{1}_{\{\vert Du\vert > a\}} .\label{eq:gradP}
\end{equation}

\subsection{Step 1: Caccioppoli--type inequalities}\label{subsec:Caccioppoli}

In order to prove the local boundedness
of $Du$, we now test the weak formulation of system $(\ref{eq:CAUCHYDIR})_{1}$
with the function $D_{i} \tilde{\phi}_{\varrho}$, where
\[
\tilde{\phi}_{\varrho}\equiv\tilde{\phi}_{i,\varrho}:=D_{i} u_{\varrho}\cdot\Phi(P_{\varrho}) \psi,\qquad  i\in\{1,\ldots,n\},
\]
and $\psi\in W_{0}^{1,\infty}(Q_{R}(z_{0}))$ is a non-negative cut-off
function that will be specified later. We thus obtain 
\begin{equation}
\int_{Q_{R}(z_{0})}\sum_{i=1}^{n}\left(\partial_{t}(D_{i} u_{\varrho})\cdot\tilde{\phi}_{i,\varrho}+\langle D_{i}[A_{\varepsilon}(x,t,Du)]_{\varrho},D\tilde{\phi}_{i,\varrho}\rangle+(f_{\varepsilon})_{\varrho}\cdot D_{i} \tilde{\phi}_{i,\varrho}\right)dz=0.\label{eq:wefo3}
\end{equation}
At this stage, we fix $z_{1}=(x_{1},t_{1})\in Q_{R}(z_{0})$ and $r\in(0,1)$
such that $Q_{r}(z_{1})\Subset Q_{R}(z_{0})$. Moreover, we choose
$\psi(x,t)=\chi(t) \omega^{2}(t) \eta^{2}(x)$ with $\chi,\omega\in W^{1,\infty}(\mathbb{R},[0,1])$,
$\chi(t_{1})=0$, $\partial_{t}\chi\leq0$, $\omega(t_{1}-r^{2})=0$,
$\partial_{t}\omega\geq0$ and $\eta\in C_{0}^{\infty}(B_{r}(x_{1}),[0,1])$.
With such a choice of $\psi$ and integrating by parts, for the term
involving the time derivative we get 
\begin{align}
J_{0,\varrho} & :=\int_{Q_{R}(z_{0})}\sum_{i=1}^{n}\partial_{t}(D_{i} u_{\varrho})\cdot D_{i} u_{\varrho} \,\Phi(P_{\varrho}) \psi\,dx\,dt = \frac{1}{2}\int_{Q_{R}(z_{0})}\partial_{t}\vert Du_{\varrho}\vert^{2} \Phi(P_{\varrho}) \psi\,dx\,dt\nonumber\\
 & = \frac{1}{2}\int_{Q_{R}(z_{0})}\partial_{t}\left[\int_{0}^{\vert Du_{\varrho}\vert^{2}}\Phi((\sqrt{w}-a)_{+})\,dw\right]\psi\,dx\,dt\nonumber\\
 & =-\frac{1}{2}\int_{Q_{R}(z_{0})}\int_{0}^{\vert Du_{\varrho}\vert^{2}}\Phi((\sqrt{w}-a)_{+})\,dw\,  \partial_{t}\chi(t)\omega^{2}(t)\eta^{2}(x)\,dx\,dt\nonumber\\
 & \qquad -\int_{Q_{R}(z_{0})}\int_{0}^{\vert Du_{\varrho}\vert^{2}}\Phi((\sqrt{w}-a)_{+})\,dw\,  \chi(t)\omega(t)\partial_{t}\omega(t)\eta^{2}(x)\,dx\,dt.
\label{eq:est26}
\end{align}
Now, for $\gamma>0$ we estimate the inner integral from above and
below as follows: 
\begin{align*} 
& \int_{0}^{\vert Du_{\varrho}\vert^{2}}\Phi((\sqrt{w}-a)_{+})\,dw=\int_{a^{2}}^{\vert Du_{\varrho}\vert^{2}}(\sqrt{w}-a)_{+}^{2} w^{\frac{\gamma-2}{2}}dw=2\int_{a}^{\vert Du_{\varrho}\vert}(w-a)_{+}^{2} w^{\gamma-1}dw\nonumber\\
 & \qquad \leq 2\int_{a}^{\vert Du_{\varrho}\vert}w^{\gamma+1}dw\cdot\mathds{1}_{\{|Du_{\varrho}|>a\}}=\frac{2}{\gamma+2} |Du_{\varrho}|^{\gamma+2} \mathds{1}_{\{|Du_{\varrho}|>a\}}\leq\frac{2}{\gamma+2} (H_{\frac{\delta}{2}}(Du_{\varrho}))^{\gamma+2}
\end{align*}
and 
\begin{align*} 
& \int_{0}^{\vert Du_{\varrho}\vert^{2}}\Phi((\sqrt{w}-a)_{+})\,dw=2\int_{a}^{\vert Du_{\varrho}\vert}(w-a)_{+}^{2} w^{\gamma-1}dw\geq2\int_{b}^{\vert Du_{\varrho}\vert}\frac{(w-a)^{2}}{w^{2}} w^{\gamma+1}\,dw\cdot\mathds{1}_{\{|Du_{\varrho}|>b\}}\nonumber\\
 & \qquad \geq \frac{\delta^{2}}{2 b^{2}(\gamma+2)} (|Du_{\varrho}|^{\gamma+2}-b^{\gamma+2}) \mathds{1}_{\{|Du_{\varrho}|>b\}}= \frac{\delta^{2}}{2 b^{2}(\gamma+2)} (|Du_{\varrho}|^{\gamma+2}-b^{\gamma+2})_{+} .
\end{align*}
Recalling that $\partial_{t}\chi\leq0$, $\partial_{t}\omega\geq0$
and $\chi,\omega\geq0$, and plugging the two previous inequalities into \eqref{eq:est26}, we obtain 
\begin{align*}
J_{0,\varrho} &  \geq-\frac{\delta^{2}}{4 b^{2}(\gamma+2)}\int_{Q_{r}(z_{1})}(|Du_{\varrho}|^{\gamma+2}-b^{\gamma+2})_{+} \partial_{t}\chi(t)\omega^{2}(t)\eta^{2}(x)\,dx\,dt\\
 & \qquad -\frac{2}{\gamma+2}\int_{Q_{r}(z_{1})}(H_{\frac{\delta}{2}}(Du_{\varrho}))^{\gamma+2} \chi(t)\omega(t)\partial_{t}\omega(t)\eta^{2}(x)\,dx\,dt.
\end{align*}
Inserting this into (\ref{eq:wefo3}) and letting $\varrho\searrow0$
yields the following inequality 
\begin{equation}
J_{0}+J_{1}+J_{2}+J_{3}+J_{4}+J_{5}+J_{6}+J_{7}+J_{8}+J_{9}\leq0,\label{eq:Jinequality}
\end{equation}
where the terms $J_{0}$ -- $J_{9}$ are defined by
\begin{align*}
J_{0} & :=- \frac{\delta^{2}}{4 b^{2}(\gamma+2)}\int_{Q_{r}(z_{1})} \big(|Du|^{\gamma+2}-b^{\gamma+2} \big)_{+}\ \partial_{t}\chi \,\omega^{2} \eta^{2}\,dz\\
 & \quad \,\, -\frac{2}{\gamma+2}\int_{Q_{r}(z_{1})} \big(H_{\frac{\delta}{2}}(Du) \big)^{\gamma+2} \chi \omega \,\partial_{t}\omega \,\eta^{2}\,dz,\\
J_{1} & :=\int_{Q_{r}(z_{1})}\mathcal{A}_{\varepsilon}(x,t,Du)(D^{2}u,D^{2}u) \Phi(P) \psi\,dz,\\
J_{2} & :=\int_{Q_{r}(z_{1})}\sum_{i=1}^{n}\sum_{j=1}^{N}\sum_{k=1}^{n} \big\langle D_{\xi_{k}^{j}}A_{\varepsilon}(x,t,Du)D_{i}D_{k}u^{j},D_{i}u DP \big\rangle \Phi'(P) \psi\,dz,\\
J_{3} & :=2\int_{Q_{r}(z_{1})}\mathcal{A}_{\varepsilon}(x,t,Du)(D^{2}u,Du\otimes D\eta) \Phi(P) \chi \omega^{2} \eta\,dz,\\
J_{4} & :=\int_{Q_{r}(z_{1})}\sum_{i=1}^{n}f_{\varepsilon}\cdot D_{i}^{2}u  \,\Phi(P) \psi\,dz,\\
J_{5} & :=\int_{Q_{r}(z_{1})}\sum_{i=1}^{n}f_{\varepsilon}\cdot D_{i}u  \,\Phi'(P) D_{i}P  \,\psi\,dz,\\
J_{6} & :=2\int_{Q_{r}(z_{1})}\sum_{i=1}^{n}f_{\varepsilon}\cdot D_{i}u  \,\Phi(P) \chi \omega^{2} \eta D_{i}\eta\,dz,\\
J_{7} & :=\int_{Q_{r}(z_{1})}\sum_{i=1}^{n} \big\langle\big[D_{x_{i}}A_{\varepsilon}\big](x,t,Du),D_{i}Du \big\rangle \Phi(P) \psi\,dz,\\
J_{8} & :=\int_{Q_{r}(z_{1})}\sum_{i=1}^{n}\big\langle\big[D_{x_{i}}A_{\varepsilon}\big](x,t,Du),D_{i}u DP \big\rangle \Phi'(P) \psi\,dz,\\
J_{9} & :=2\int_{Q_{r}(z_{1})}\sum_{i=1}^{n} \big\langle\big[D_{x_{i}}A_{\varepsilon}\big](x,t,Du),D\eta D_{i}u \big\rangle \Phi(P) \chi \omega^{2} \eta\,dz.
\end{align*}
For convenience of notation, we now abbreviate $Q_r=Q_r(z_1)$ and $B_r=B_r(x_1)$. 
In what follows, we will
estimate each of the last nine terms above. We first prove that $J_{2}$
is non-negative, thus we can drop it in the following. We limit ourselves
to dealing with the case $p>2$, since for $1<p\leq 2$ the same result
follows in a similar way. For $p>2$ we have 
\begin{align*}
D_{\xi_{k}^{j}}[(A_{\varepsilon})_{m}^{\ell}(x,t,\xi)] & =D_{\xi_{k}^{j}}\bigg[\frac{\partial_{s}F(x,t,|\xi|)}{|\xi|} \xi_{m}^{\ell}+\varepsilon \big(|\xi|^{2}+1\big)^{\frac{p-2}{2}}\xi_{m}^{\ell}\bigg]\\
 & =\bigg(\frac{\partial_{s}F(x,t,|\xi|)}{|\xi|}+\varepsilon \big(|\xi|^{2}+1\big)^{\frac{p-2}{2}}\bigg) \delta_{mk} \delta_{\ell j}\\
 & \quad\;+\bigg(\frac{|\xi| \partial_{ss}F(x,t,|\xi|)-\partial_{s}F(x,t,|\xi|)}{|\xi|^{3}}+(p-2) \varepsilon \big(|\xi|^{2}+1\big)^{\frac{p-4}{2}}\bigg) \xi_{k}^{j} \xi_{m}^{\ell} ,
\end{align*}
where $\delta_{mk}$ and $\delta_{\ell j}$ denote the Kronecker
delta. Using the above equality and the expression for $D|Du|$, we
then obtain 
\begin{align}\label{eq:pleasedonotremoveme}
\langle &D_{\xi_{k}^{j}}A_{\varepsilon}(x,t,Du)D_{i}D_{k}u^{j},D_{i}u DP\rangle\nonumber\\
& =
\sum_{m=1}^{n}\sum_{\ell=1}^{N}D_{\xi_{k}^{j}}[(A_{\varepsilon})_{m}^{\ell}(x,t,Du)] D_{i}D_{k}u^{j} D_{i}u^{\ell} D_{m}|Du|\nonumber\\
 & =\sum_{m=1}^{n}\sum_{\ell=1}^{N}\bigg(\frac{\partial_{s}F(x,t,|Du|)}{|Du|}+\varepsilon \big(|Du|^{2}+1\big)^{\frac{p-2}{2}}\bigg)D_{i}D_{m}u^{\ell} D_{i}u^{\ell} D_{m}|Du|\nonumber\\
 & \qquad +\sum_{m=1}^{n}\sum_{\ell=1}^{N}\bigg(\frac{|Du|\partial_{ss}F(x,t,|Du|)-\partial_{s}F(x,t,|Du|)}{|Du|^{3}}+(p-2) \varepsilon \big(|Du|^{2}+1\big)^{\frac{p-4}{2}}\bigg)\cdot\nonumber\\
 & \qquad\qquad \cdot D_{k}u^{j} D_{m}u^{\ell} D_{i}D_{k}u^{j} D_{i}u^{\ell} D_{m}|Du|\nonumber\\
 & =\big(\partial_{s}F(x,t,|Du|)+\varepsilon \big(|Du|^{2}+1\big)^{\frac{p-2}{2}}|Du|\big) |D|Du||^{2}\nonumber\\
 & \qquad +\sum_{m=1}^{n}\sum_{\ell=1}^{N}\bigg(\frac{|Du| \partial_{ss}F(x,t,|Du|)-\partial_{s}F(x,t,|Du|)}{|Du|^{2}}+(p-2) \varepsilon \big(|Du|^{2}+1\big)^{\frac{p-4}{2}}|Du|\bigg)\cdot\nonumber\\
 & \qquad\qquad \cdot D_{m}u^{\ell} D_{i}u^{\ell} D_{m}|Du| D_{i}|Du|
\end{align}
almost everywhere in the set $Q_{r}\cap\{|Du|>a\}$. Now, the
Cauchy-Schwarz inequality implies that 
\[
\sum_{m=1}^{n}\sum_{\ell=1}^{N}D_{m}u^{\ell} D_{i}u^{\ell} D_{m}|Du| D_{i}|Du| \leq |Du|^{2} |D|Du||^{2}.
\]
Combining this with the fact that $\partial_{s}F(x,t,|Du|),\partial_{ss}F(x,t,|Du|)\geq0$, from \eqref{eq:pleasedonotremoveme} we infer that 
\begin{align*}
 & \langle D_{\xi_{k}^{j}}A_{\varepsilon}(x,t,Du)D_{i}D_{k}u^{j},D_{i}u DP\rangle\\
 & \qquad \geq\partial_{s}F(x,t,|Du|) |D|Du||^{2}- \frac{\partial_{s}F(x,t,|Du|)}{|Du|^{2}} \sum_{m=1}^{n}\sum_{\ell=1}^{N}D_{m}u^{\ell} D_{i}u^{\ell} D_{m}|Du| D_{i}|Du|\geq0
\end{align*}
almost everywhere in $Q_{r}\cap\{|Du|>a\}$. Furthermore, we
obviously have 
\[
\langle D_{\xi_{k}^{j}}A_{\varepsilon}(x,t,Du)D_{i}D_{k}u^{j},D_{i}u DP\rangle=0 \qquad \mathrm{when} \quad \vert Du\vert\leq a.
\]
Thus, taking into account that $\psi\geq0$ and $\Phi'(w)=(a+w)^{\gamma-3} w(\gamma w+2a)\geq0$
for every $w\geq0$, we arrive at the desired conclusion, that is
$J_{2}\geq0$. 

We now deal with the terms
$J_{1}$ and $J_{3}$. We let $0<\varepsilon<\min \{\frac{1}{2},(\frac{\delta}{4})^{p-1}\}$.
By Lemma \ref{lem:lemma app1} we have 
\begin{equation}
J_{1} \geq \frac{1}{c_{2}}\int_{Q_{r}}\vert Du\vert^{p-2}\vert D^{2}u\vert^{2} \Phi(P) \psi\,dz.\label{eq:est31}
\end{equation}
Using the Cauchy-Schwarz inequality together
with Young's inequality and again Lemma \ref{lem:lemma app1}, we get 
\begin{align}
	\vert J_{3}\vert 
	&\leq
	2\int_{Q_{r}}
	\vert\mathcal{A}_{\varepsilon}(x,t,Du)(D^{2}u,Du\otimes D\eta)\vert 
	\Phi(P) \chi \omega^{2} \eta\,dz\nonumber\\
 	&\leq
	2\int_{Q_{r}}
	\sqrt{\mathcal{A}_{\varepsilon}(x,t,Du)(D^{2}u,D^{2}u)}
	\sqrt{\mathcal{A}_{\varepsilon}(x,t,Du)(Du\otimes D\eta,Du\otimes D\eta)} 
	\Phi(P) \chi \omega^{2} \eta\,dz\nonumber\\
 	& \leq
	\frac{1}{2} J_{1}+
	2\int_{Q_{r}}
	\mathcal{A}_{\varepsilon}(x,t,Du)(Du\otimes D\eta,Du\otimes D\eta) 
	\Phi(P) \chi \omega^{2}\,dz \nonumber\\
 	&\le 
 	\frac{1}{2} J_{1} +
	2 c_{2}\int_{Q_{r}}\vert Du\vert^{p} \Phi(P) \chi \omega^{2} \vert D\eta\vert^{2}\,dz,
\label{eq:est31bis}
\end{align}
where $c_{2}\equiv c_{2}(p,C_{1},\delta)>1$. 

Now we estimate the terms
containing $f_{\varepsilon}$. To this end, we will use that $\vert f_{\varepsilon}\vert\leq\vert f\vert$.
Let us first consider $J_{6}$. Applying Young's inequality with exponents
$(2,2)$ and using the definitions of $P$, $\Phi$ and $H_{\delta}$,
we find that 
\begin{align}
\vert J_{6}\vert & \leq 2 \int_{Q_{r}}|f| |Du| \Phi(P) \chi \omega^{2} \eta |D\eta|\,dz\nonumber\\
 & \leq\int_{Q_{r}}|f|^{2} |Du| \Phi(P) \psi\,dz
 +
 \int_{Q_{r}}|Du| \Phi(P) \chi \omega^{2} |D\eta|^{2}\,dz\nonumber\\
 & \leq
 \int_{Q_{r}}|f|^{2} (H_{\delta}(Du))^{\gamma+1} \psi\,dz
 +
 \int_{Q_{r}}|Du|^{p} \Phi(P) \chi \omega^{2} |D\eta|^{2}\,dz.
\label{eq:est32}
\end{align}
We now deal with the terms
$J_{4}$ and $J_{5}$ at the same time. Using equality (\ref{eq:gradP}),
the fact that $\vert Du\vert\leq2P$ on the set $Q_{r}\cap\{\vert Du\vert\geq2a\}$
and inequality (\ref{PhiDt}) with $w=P$, we get
\begin{align}
\vert J_{4}\vert+\vert J_{5}\vert & \leq n\int_{Q_{r}}|f| |D^{2}u| \Phi(P) \psi\,dz
+
\int_{Q_{r}\cap\{\vert Du\vert \geq 2a\}}|f| |Du| \Phi'(P) \vert D^{2}u\vert \psi\,dz\nonumber\\
 & \qquad +
 \int_{Q_{r}\cap\{a < \vert Du\vert < 2a\}}|f| |Du| \Phi'(P)\,\vert D^{2}u\vert \psi\,dz\nonumber\\
 & \leq 5 n (\gamma+1)\int_{Q_{r}}|f| \Phi(P) \vert D^{2}u\vert \psi\,dz +
 \int_{Q_{r}\cap\{a < \vert Du\vert < 2a\}}|f| |Du| \Phi'(P) \vert D^{2}u\vert \psi\,dz.
\label{eq:est33}
\end{align}
At this point, we apply Young's inequality with $\kappa>0$ and exponents
$(2,2)$ to control the first term on the right-hand side of \eqref{eq:est33}.
In the case $p>2$, we obtain 
\begin{align} 
& 5 n (\gamma+1)\int_{Q_{r}}|f| \Phi(P) \vert D^{2}u\vert \psi\,dz\nonumber\\
 & \qquad \leq \kappa\int_{Q_{r}}\Phi(P) \vert D^{2}u\vert^{2} \psi\,dz+\frac{25 n^{2}(\gamma+1)^{2}}{4\kappa}\int_{Q_{r}}|f|^{2} \Phi(P) \psi\,dz\nonumber\\
 & \qquad \leq \kappa\int_{Q_{r}}\Phi(P) \vert D^{2}u\vert^{2} \psi\,dz+\frac{25 n^{2}(\gamma+1)^{2}}{4\kappa}\int_{Q_{r}}|f|^{2} (H_{\frac{\delta}{2}}(Du))^{\gamma} \psi\,dz\nonumber\\
 & \qquad \leq \kappa\int_{Q_{r}}\Phi(P) \vert D^{2}u\vert^{2} \vert Du\vert^{p-2} \psi\,dz+\frac{25 n^{2}(\gamma+1)^{2}}{4\kappa}\int_{Q_{r}}|f|^{2} (H_{\delta}(Du))^{\gamma+1} \psi\,dz.
\label{eq:est34}
\end{align}
If, on the other hand, $1<p\leq 2$, then we have 
\begin{align} 
& 5n(\gamma+1)\int_{Q_{r}}|f| \Phi(P) \vert D^{2}u\vert \psi\,dz\nonumber\\
 & \qquad \leq
 \kappa\int_{Q_{r}}\Phi(P) \vert D^{2}u\vert^{2} \vert Du\vert^{p-2} \psi\,dz +
 \frac{25 n^{2}(\gamma+1)^{2}}{4\kappa}\int_{Q_{r}}|f|^{2} \Phi(P) \vert Du\vert^{2-p} \psi\,dz\nonumber\\
 & \qquad \leq
 \kappa\int_{Q_{r}}\Phi(P) \vert D^{2}u\vert^{2} \vert Du\vert^{p-2} \psi\,dz +
 \frac{25 n^{2}(\gamma+1)^{2}}{4\kappa}\int_{Q_{r}}|f|^{2} (H_{\delta}(Du))^{\gamma+1} \psi\,dz,
\label{eq:est35}
\end{align}
where, in the last line, we have used that $\Phi(P)=0$ in $Q_{r}\cap\{\vert Du\vert\leq a\}$,
while $\Phi(P) \vert Du\vert^{2-p}\leq\vert Du\vert^{\gamma+1}\leq(H_{\delta}(Du))^{\gamma+1}$
in $Q_{r}\cap\{\vert Du\vert>a\}$. Now we estimate the second
integral on the right-hand side of \eqref{eq:est33}. Using Young's
inequality with exponents $(2,2)$ as well as inequalities (\ref{eq:ineqPhi})
and (\ref{PhiDt}) with $w=P$, for every $\zeta>0$ we obtain 
\begin{align*} 
& \int_{Q_{r}\cap\{a<\vert Du\vert<2a\}}|f| |Du| \Phi'(P) \vert D^{2}u\vert \psi\,dz\nonumber\\
 & \qquad =
 \int_{Q_{r}\cap\{a<\vert Du\vert<2a\}}\left[|f|^{2} |Du|^{4-p}  \frac{\gamma+1}{P+\zeta}\right]^{\frac{1}{2}}\left[|Du|^{p-2} \vert D^{2}u\vert^{2}  \frac{P+\zeta}{\gamma+1}\right]^{\frac{1}{2}}\Phi'(P) \psi\,dz\nonumber\\
 & \qquad \leq
 \frac{\kappa}{2 (\gamma+1)}\int_{Q_{r}\cap\{a<\vert Du\vert<2a\}}|Du|^{p-2} |D^{2}u|^{2} (P+\zeta) \Phi'(P) \psi\,dz\nonumber\\
 & \qquad\qquad +
 \frac{\gamma+1}{2 \kappa}\int_{Q_{r}\cap\{a<\vert Du\vert<2a\}}|f|^{2} |Du|^{4-p}  \frac{\Phi'(P)}{P+\zeta} \psi\,dz\nonumber\\
 & \qquad \leq
 \kappa\int_{Q_{r}\cap\{a<\vert Du\vert<2a\}}|Du|^{p-2} |D^{2}u|^{2} \Phi(P) \psi\,dz\nonumber\\
 & \qquad\qquad +
 \kappa\zeta\int_{Q_{r}\cap\{a<\vert Du\vert<2a\}}|Du|^{p-2} |D^{2}u|^{2} \vert Du\vert^{\gamma} \psi\,dz\nonumber\\
 & \qquad\qquad +
 \frac{(\gamma+1)^{2}}{\kappa}\int_{Q_{r}\cap\{a<\vert Du\vert<2a\}}|f|^{2} |Du|^{4-p} \vert Du\vert^{\gamma-2} \psi\,dz\nonumber\\
 & \qquad\leq \kappa\int_{Q_{r}}|Du|^{p-2} |D^{2}u|^{2} \Phi(P) \psi\,dz+(2a)^{\gamma} \kappa \zeta\int_{Q_{r}\cap\{a<\vert Du\vert<2a\}}|Du|^{p-2} |D^{2}u|^{2}\,dz\nonumber\\
 & \qquad\qquad +
 \frac{(\gamma+1)^{2}}{\kappa}\int_{Q_{r}\cap\{a<\vert Du\vert<2a\}}|f|^{2} (H_{\delta}(Du))^{\gamma+1} \psi\,dz,
\end{align*}
where the last inequality is due to the fact that $\vert Du\vert^{4-p} \vert Du\vert^{\gamma-2}=\vert Du\vert^{\gamma+1} \vert Du\vert^{1-p}\leq\vert Du\vert^{\gamma+1}\le(H_{\delta}(Du))^{\gamma+1}$
for every $p>1$ on the set $Q_{r}\cap\{a<\vert Du\vert<2a\}$.
We now recall that, by virtue of Theorem \ref{thm:regul}, we have
\[
|D^{2}u|^{2} |Du|^{p-2} \in L_{loc}^{1}(Q_{R}(z_{0})).
\]
Therefore, letting $\zeta\searrow0$ in the preceding inequality, we get 
\begin{align} 
& \int_{Q_{r}\cap\{a<\vert Du\vert<2a\}}|f| |Du| \Phi'(P) \vert D^{2}u\vert \psi\,dz\nonumber\\
 & \qquad \leq
 \kappa\int_{Q_{r}}|Du|^{p-2} |D^{2}u|^{2} \Phi(P) \psi\,dz+\frac{(\gamma+1)^{2}}{\kappa}\int_{Q_{r}\cap\{a<\vert Du\vert<2a\}}|f|^{2} (H_{\delta}(Du))^{\gamma+1} \psi\,dz.
\label{eq:est37}
\end{align}
At this point, joining estimates \eqref{eq:est33}$-$\eqref{eq:est37}, we obtain 
\begin{align}\vert J_{4}\vert+\vert J_{5}\vert & \leq \kappa (n+1)\int_{Q_{r}}\Phi(P) \vert D^{2}u\vert^{2} \vert Du\vert^{p-2} \psi\,dz\nonumber\\
 & \qquad +
 \frac{8 n^{2} (\gamma+1)^{2}}{\kappa}\int_{Q_{r}}|f|^{2} (H_{\delta}(Du))^{\gamma+1} \psi\,dz.
\label{eq:est38}
\end{align}
$\hspace*{1em}$ We now turn our attention
to $J_{7}$ and $J_{9}$. Using (\ref{eq:spgraAeps}) and
Young's inequality, we find 
\begin{align}
|J_{7}| & \leq 2^{2p-2} K n\int_{Q_{r}}|Du|^{p-1} |D^{2}u| \Phi(P) \psi\,dz\nonumber\\
 & \leq \kappa\int_{Q_{r}}|Du|^{p-2} |D^{2}u|^{2} \Phi(P) \psi\,dz+\frac{16^{p} K^{2} n^{2}}{2 \kappa}\int_{Q_{r}}|Du|^{p} \Phi(P) \psi\,dz.
\label{eq:est39}
\end{align}
Similarly, we have 
\begin{align}
|J_{9}| & \leq 2^{2p-1} K n\int_{Q_{r}}|Du|^{p} |D\eta| \Phi(P) \chi \omega^{2} \eta\,dz\nonumber\\
 & \leq 2^{2p-2} K^{2} n\int_{Q_{r}}|Du|^{p} \Phi(P) \psi \,dz+2^{2p-2} n\int_{Q_{r}}|Du|^{p} \vert D\eta\vert^{2} \Phi(P) \chi \omega^{2}\,dz.
\label{eq:est40}
\end{align}
$\hspace*{1em}$ Finally we estimate $J_{8}$.
Using estimate (\ref{eq:spgraAeps}) and equality (\ref{eq:gradP}),
we obtain 
\begin{align}
	|J_{8}| 
	& \leq
	2^{2p-2}K n\int_{Q_{r}}|Du|^{p} |DP| \Phi'(P) \psi\,dz 	
	\leq
	2^{2p-2}Kn\int_{Q_{r}}|Du|^{p} |D^{2}u| \Phi'(P) \psi\,dz\nonumber\\
 & =
	2^{2p-2}Kn\int_{Q_{r}\cap\{a<\vert Du\vert<2a\}}|Du|^{p} |D^{2}u| \Phi'(P) \psi\,dz\nonumber\\
 & \qquad +
 2^{2p-2}Kn\int_{Q_{r}\cap\{\vert Du\vert \geq 2a\}}|Du|^{p} |D^{2}u| \Phi'(P) \psi\,dz.
\label{eq:est41}
\end{align}
Now we deal with the second integral on the right-hand side of \eqref{eq:est41}.
Using the fact that $\vert Du\vert\leq2P$ in $Q_{r}\cap\{\vert Du\vert\geq2a\}$,
the inequality (\ref{PhiDt}) with $w=P$ as well as Young's inequality,
we get 
\begin{align} 
	& 2^{2p-2}Kn\int_{Q_{r}\cap\{\vert Du\vert \geq 2a\}}|Du|^{p} |D^{2}u| \Phi'(P) \psi\,dz\nonumber\\
 	& \qquad \leq
	2^{2p-1}Kn\int_{Q_{r}\cap\{\vert Du\vert \geq 2a\}}|Du|^{p-1} |D^{2}u| P \Phi'(P) \psi\,dz\nonumber\\
 & \qquad \leq
 2^{2p}(\gamma+1)Kn\int_{Q_{r}\cap\{\vert Du\vert \geq 2a\}}|Du|^{p-1} |D^{2}u| \Phi(P) \psi\,dz\nonumber\\
 & \qquad \leq
 \kappa\int_{Q_{r}}|Du|^{p-2} |D^{2}u|^{2} \Phi(P) \psi\,dz+\frac{16^{p}(\gamma+1)^{2} K^{2} n^{2}}{4 \kappa} \int_{Q_{r}}|Du|^{p} \Phi(P) \psi\,dz\nonumber\\
 & \qquad \leq
 \kappa\int_{Q_{r}}|Du|^{p-2} |D^{2}u|^{2} \Phi(P) \psi\,dz+\frac{16^{p}(\gamma+1)^{2} K^{2} n^{2}}{4 \kappa} \int_{Q_{r}}|Du|^{\gamma+p} \psi\,dz.
\label{eq:est42}
\end{align}
We now estimate the first term on the right-hand side of \eqref{eq:est41}
by resorting to the same procedure leading to \eqref{eq:est37}.
Thus, for every $\sigma>0$ we have 
\begin{align*} 
& 2^{2p-2}Kn\int_{Q_{r}\cap\{a<\vert Du\vert<2a\}}|Du|^{p} |D^{2}u| \Phi'(P) \psi\,dz\nonumber\\
 & \qquad =
 \int_{Q_{r}\cap\{a<\vert Du\vert<2a\}}\left[2^{4p-4} K^{2} n^{2} \vert Du\vert^{p+2} \frac{\gamma+1}{P+\sigma}\right]^{\frac{1}{2}}\left[\vert Du\vert^{p-2} \vert D^{2}u\vert^{2} \frac{P+\sigma}{\gamma+1}\right]^{\frac{1}{2}}\Phi'(P) \psi\,dz\nonumber\\
 & \qquad \leq
 \frac{\kappa}{2 (\gamma+1)} \int_{Q_{r}\cap\{a<\vert Du\vert<2a\}}\vert Du\vert^{p-2} \vert D^{2}u\vert^{2} (P+\sigma) \Phi'(P) \psi\,dz\nonumber\\
 & \qquad\qquad +
 \frac{16^{p}(\gamma+1) K^{2} n^{2}}{32 \kappa} \int_{Q_{r}\cap\{a<\vert Du\vert<2a\}}\vert Du\vert^{p+2}  \frac{\Phi'(P)}{P+\sigma}  \psi\,dz\nonumber\\
 & \qquad \leq
 \kappa\int_{Q_{r}}\vert Du\vert^{p-2} \vert D^{2}u\vert^{2} \Phi(P) \psi\,dz +
 (2a)^{\gamma} \kappa \sigma\int_{Q_{r}}\vert Du\vert^{p-2} \vert D^{2}u\vert^{2} dz\nonumber\\
 & \qquad\qquad +
 \frac{16^{p}(\gamma+1)^{2} K^{2} n^{2}}{4 \kappa} \int_{Q_{r}}\vert Du\vert^{\gamma+p} \psi\,dz.
\end{align*}
Letting $\sigma\searrow0$ in the last inequality, we then obtain 
\begin{align}\label{eq:est44}
& 2^{2p-2}Kn\int_{Q_{r}\cap\{a<\vert Du\vert<2a\}}|Du|^{p} |D^{2}u| \Phi'(P) \psi\,dz\nonumber\\
 & \qquad \leq
 \kappa\int_{Q_{r}}\vert Du\vert^{p-2} \vert D^{2}u\vert^{2} \Phi(P) \psi\,dz +
 \frac{16^{p}(\gamma+1)^{2} K^{2} n^{2}}{4 \kappa} \int_{Q_{r}}\vert Du\vert^{\gamma+p} \psi\,dz.
\end{align}
At this point, combining estimates \eqref{eq:est41}$-$\eqref{eq:est44}, we find 
\begin{equation}
\vert J_{8}\vert
\leq
2\kappa\int_{Q_{r}}\vert Du\vert^{p-2} \vert D^{2}u\vert^{2} \Phi(P) \psi\,dz +
\frac{16^{p}(\gamma+1)^{2} K^{2} n^{2}}{2 \kappa} \int_{Q_{r}}\vert Du\vert^{\gamma+p} \psi\,dz.\label{eq:est45}
\end{equation}
$\hspace*{1em}$ Now, observe that from
inequality (\ref{eq:Jinequality}) it follows that 
\begin{equation}
J_{0}+J_{1} \leq \vert J_{3}\vert+\vert J_{4}\vert+\vert J_{5}\vert+\vert J_{6}\vert+\vert J_{7}\vert+\vert J_{8}\vert+\vert J_{9}\vert,\label{eq:Jineq2}
\end{equation}
since $J_{2}\geq0$. Plugging estimates (\ref{eq:est31})$-$\eqref{eq:est32},
\eqref{eq:est38}$-$\eqref{eq:est40} and (\ref{eq:est45}) into
(\ref{eq:Jineq2}) and choosing $\kappa=\frac{1}{4 c_{2} (n+4)}$,
we arrive at 
\begin{align}\label{eq:est46}
 	- & \frac{\delta^{2}}{4 b^{2}(\gamma+2)}\int_{Q_{r}}(|Du|^{\gamma+2}-b^{\gamma+2})_{+}\ \partial_{t}\chi \,\omega^{2}\eta^{2}\,dz +
	\frac{1}{4 c_{2}}\int_{Q_{r}}|Du|^{p-2}\vert D^{2}u\vert^{2} \Phi(P) \psi\,dz\nonumber \\
 	& \leq
	c(\gamma+1)^{2} \bigg[
	\int_{Q_{r}} \big[|Du|^{p}\Phi(P)\chi\omega^{2}\vert D\eta\vert^{2} +
	|Du|^{\gamma+p}\psi \big]\,dz +
	\int_{Q_{r}}|f|^{2} (H_{\delta}(Du))^{\gamma+1} \psi\,dz\bigg] \nonumber \\
 	& \quad +
	\frac{2}{\gamma+2}\int_{Q_{r}}(H_{\frac{\delta}{2}}(Du))^{\gamma+2} \chi \omega \,\partial_{t}\omega \,\eta^{2}\,dz,
\end{align}
where $c\equiv c(n,p,\delta,C_{1},K)>1$.

At this stage, we perform
a particular choice of the function $\chi$. For a fixed time $\tau\in(t_{1}-r^{2},t_{1})$
and $\vartheta\in(0,t_{1}-\tau)$, we define the Lipschitz continuous
function $\chi$ by 
\[
\chi(t):=\begin{cases}
1 & \text{if }  t\leq\tau, \\
1-\frac{t-\tau}{\vartheta} & \text{if }  \tau<t\leq\tau+\vartheta,\\
0 & \text{if }  t>\tau+\vartheta.
\end{cases}
\]
Therefore, we have 
\[
-\int_{Q_{r}(z_{1})}(|Du|^{\gamma+2}-b^{\gamma+2})_{+}\ \partial_{t}\chi \,\omega^{2}\eta^{2}\,dz = \frac{1}{\vartheta}\int_{\tau}^{\tau+\vartheta}\int_{B_{r}(x_{1})}(|Du|^{\gamma+2}-b^{\gamma+2})_{+}\ \omega^{2}\eta^{2}\,dx\,dt,
\]
which converges for almost every $\tau\in(t_{1}-r^{2},t_{1})$ to
\[
\int_{B_{r}(x_{1})\times\{\tau\}}(|Du|^{\gamma+2}-b^{\gamma+2})_+  \omega^{2}\eta^{2}\,dx \qquad \mathrm{as} \qquad \vartheta\searrow0.
\]
Hence, letting $\vartheta\searrow0$ and taking the supremum over
$\tau\in(t_{1}-r^{2},t_{1})$, estimate \eqref{eq:est46} turns into
\begin{align*}
	& \frac{\delta^{2}}{4b^{2}(\gamma+2)}
	\sup\limits _{\tau\in(t_{1}-r^{2},t_{1})}
	\int_{B_{r}\times\{\tau\}}(|Du|^{\gamma+2}-b^{\gamma+2})_{+}  	
	\omega^{2}\eta^{2}\,dx +
 	\frac{1}{4c_{2}}\int_{Q_{r}}|Du|^{p-2}\vert D^{2}u\vert^{2}\Phi(P)
	\omega^{2}\eta^{2}\,dz\\
 	& \qquad \leq
 	c(\gamma+1)^{2}\int_{Q_{r}} 
	\big[|Du|^{p} \Phi(P) \omega^{2} |D\eta|^{2} +
	|Du|^{\gamma+p}\omega^{2}\eta^{2} \big]\,dz\\
 	& \qquad\qquad +
 	\int_{Q_{r}}(H_{\frac{\delta}{2}}(Du))^{\gamma+2}
	\omega\,\partial_{t}\omega\,\eta^{2}\,dz +
	c(\gamma+1)^{2}\int_{Q_{r}}|f|^{2} (H_{\delta}(Du))^{\gamma+1}
	\omega^{2}\eta^{2}\,dz \\
 	& \qquad \leq
	c(\gamma+1)^{2}\int_{Q_{r}}(H_{\delta}(Du))^{\gamma+p}(\omega^{2}
	\vert D\eta\vert^{2}+\omega^{2}\eta^{2})\,dz\\
 	& \qquad\qquad +
	\int_{Q_{r}}(H_{\frac{\delta}{2}}(Du))^{\gamma+2}\omega\,\partial_{t}\omega
	\,\eta^{2}\,dz +
	c(\gamma+1)^{2}\int_{Q_{r}}|f|^{2}(H_{\delta}(Du))^{\gamma+1}
	\omega^{2}\eta^{2}\,dz.
\end{align*}
In summary, we have so far obtained the following result.

\begin{prop}
Let $p>1$, $\gamma>0$, $\delta>0$,
$b=1+\delta$ and $0<\varepsilon<\min \{\frac{1}{2},(\frac{\delta}{4})^{p-1}\}$.
Moreover, assume that 
\[
u_{\varepsilon}\in C^{0}\left([t_{0}-R^{2},t_{0}];L^{2}\left(B_{R}(x_{0}),\mathbb{R}^{N}\right)\right)\cap L^{p}\left(t_{0}-R^{2},t_{0};W^{1,p}\left(B_{R}(x_{0}),\mathbb{R}^{N}\right)\right)
\]
is the unique energy solution of problem
$(\ref{eq:CAUCHYDIR})$ with $Q'=Q_{R}(z_{0})\Subset\Omega_{T}$ and
$u$ a weak solution of $\mathrm{(\ref{eq:syst})}$, satisfying the
additional assumption of Remark \ref{rmk2}
if $1<p\le\frac{2n}{n + 2}$. Then, for any parabolic
cylinder $Q_{r}(z_{1})\Subset Q_{R}(z_{0})$
with $r\in(0,1)$ and any cut-off functions $\eta\in C_{0}^{\infty}(B_{r}(x_{1}),[0,1])$
and $\omega\in W^{1,\infty}(\mathbb{R},[0,1])$ satisfying $\omega(t_{1}-r^{2})=0$
and $\partial_{t}\omega\geq0$, the estimate
\begin{align}\label{eq:est47}
	&\frac{\delta^{2}}{4b^{2}(\gamma+2)}
	\sup\limits_{\tau\in(t_{1}-r^{2},t_{1})}
	\int_{B_{r}(x_{1})\times\{\tau\}}(|Du_{\varepsilon}|^{\gamma+2}-b^{\gamma+2})_{+} 
	\omega^{2}\eta^{2}\,dx\nonumber\\
	&\qquad\qquad+
	\frac{1}{4c_{2}}\int_{Q_{r}(z_{1})}
	|Du_{\varepsilon}|^{p-2} \vert D^{2}u_{\varepsilon}\vert^{2} \Phi(P) 
	\omega^{2}\eta^{2}\,dz\nonumber\\
	&\qquad \leq
	c(\gamma+1)^{2}\int_{Q_{r}(z_{1})}(H_{\delta}(Du_{\varepsilon}))^{\gamma+p} (\omega^{2} \vert D\eta\vert^{2}+\omega^{2}\eta^{2})\,dz \nonumber\\
	&\qquad\qquad +
	\int_{Q_{r}(z_{1})}(H_{\frac{\delta}{2}}(Du_{\varepsilon}))^{\gamma+2} 
	\omega \partial_{t}\omega \eta^{2}\,dz +
	c (\gamma+1)^{2}\int_{Q_{r}(z_{1})}|f|^{2} (H_{\delta}(Du_{\varepsilon}))^{\gamma+1} \omega^{2}\eta^{2}\,dz
\end{align}
holds true for some constants $c_{2}\equiv c_{2}(p,C_{1},\delta)>1$
and $c\equiv c(n,p,\delta,C_{1},K)>1$.\smallskip{}
\end{prop}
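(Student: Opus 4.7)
The plan is to adapt a ``differentiate and test'' strategy to this singular setting. Working with the Steklov-averaged formulation of the approximating system $(\ref{eq:CAUCHYDIR})_1$ so that time derivatives are admissible, for each $i \in \{1, \dots, n\}$ I would test against $D_i \tilde{\phi}_{i,\varrho}$ with
\[
\tilde{\phi}_{i,\varrho} := D_i u_\varrho \cdot \Phi(P_\varrho)\,\psi, \qquad \psi(x,t) := \chi(t)\,\omega^2(t)\,\eta^2(x),
\]
where $u_\varrho$ denotes the spatial mollification and $\chi$, $\omega$, $\eta$ are the cut-offs prescribed in the proposition. After integrating by parts in $x_i$ and sending $\varrho \searrow 0$ via the Generalized Dominated Convergence Theorem, combined with Lemmas \ref{lem:L3}--\ref{lem:L4} (which provide $D^2 u_\varepsilon \in L^2_{\mathrm{loc}}$ and the admissibility of all the relevant products), the identity decomposes into the ten terms $J_0, \ldots, J_9$ of \eqref{eq:Jinequality}.

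The two ``good'' contributions are $J_0$ and $J_1$. For $J_0$ I would rewrite $\partial_t |Du|^2 \cdot \Phi(P)$ as the $t$-derivative of the antiderivative $w \mapsto \int_0^w \Phi((\sqrt{s}-a)_+)\,ds$ evaluated at $|Du|^2$, bound this antiderivative from below by $\tfrac{\delta^2}{2b^2(\gamma+2)}(|Du|^{\gamma+2} - b^{\gamma+2})_+$ on $\{|Du|>b\}$ and from above by $\tfrac{2}{\gamma+2}(H_{\delta/2}(Du))^{\gamma+2}$, then specialize $\chi$ to the linear cut-off $\chi(t) = \min\{1,(1 - (t-\tau)/\vartheta)_+\}$, let $\vartheta \searrow 0$, and take the supremum over $\tau$ to recover the $\sup$-term on the left-hand side of \eqref{eq:est47}. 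For $J_1$, the ellipticity lower bound from Lemma \ref{lem:lemma app1} directly gives the integral of $|Du|^{p-2}|D^2 u|^2 \Phi(P) \psi$.

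I expect the main obstacle to be showing that $J_2$, which involves the contraction $D_{\xi_k^j} A_\varepsilon \cdot D_i D_k u^j \cdot D_i u \cdot DP \cdot \Phi'(P)$, is non-negative, so that it can simply be discarded. The plan is to expand $D_{\xi_k^j}(A_\varepsilon)_m^\ell$ into a Kronecker piece proportional to $\partial_s F/|\xi|$ and a rank-one piece $\xi_k^j \xi_m^\ell$ carrying the coefficient $\partial_{ss} F/|\xi| - \partial_s F/|\xi|^3$, plus the $\varepsilon$-correction; after contracting with $D_i u \otimes D|Du|$, the Cauchy--Schwarz bound
\[
\sum_{m,\ell} D_m u^\ell D_i u^\ell D_m|Du|\, D_i|Du| \leq |Du|^2\, |D|Du||^2
\]
allows the potentially negative $-\partial_s F/|Du|^2$-contribution to be absorbed by the positive $\partial_s F \cdot |D|Du||^2$ coming from the Kronecker piece. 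Since $\partial_{ss} F \geq 0$ and the $\varepsilon$-correction is non-negative, $J_2 \geq 0$ follows.

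For the error terms $J_3$--$J_9$, I would apply Young's inequality so that the resulting $|Du|^{p-2}|D^2u|^2 \Phi(P) \psi$-contributions can be reabsorbed into $J_1$ after choosing $\kappa = \tfrac{1}{4 c_2 (n+4)}$. The one subtle point is the singularity of $\Phi'(P)$ as $P \searrow 0$ in $J_4, J_5, J_8$: on $\{|Du| \geq 2a\}$ I would use $|Du| \leq 2P$ together with \eqref{PhiDt} to convert $P \Phi'(P)$ into $(\gamma+1)\Phi(P)$, while on $\{a < |Du| < 2a\}$ I would introduce a small regularization parameter $\zeta > 0$, apply Young's inequality with weights $(P+\zeta)^{\pm 1/2}$, and then send $\zeta \searrow 0$, invoking $|D^2 u|^2 |Du|^{p-2} \in L^1_{\mathrm{loc}}$ from Theorem \ref{thm:regul} to annihilate the residual term. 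Assembling the bounds and using $|Du|^{\gamma+p} \leq (H_\delta(Du))^{\gamma+p}$ on $\{|Du|>a\}$ then produces \eqref{eq:est47}.
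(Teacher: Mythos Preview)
Your proposal is correct and follows essentially the same route as the paper: test the differentiated system with $D_i\big(D_i u_\varrho\,\Phi(P_\varrho)\,\chi\omega^2\eta^2\big)$, decompose into $J_0,\dots,J_9$, extract the two good terms via the antiderivative trick for $J_0$ and the ellipticity bound of Lemma~\ref{lem:lemma app1} for $J_1$, discard $J_2\ge 0$ by the structural computation with Cauchy--Schwarz, and absorb the error terms with Young's inequality and the choice $\kappa=\tfrac{1}{4c_2(n+4)}$, handling the region $\{a<|Du|<2a\}$ via the $\zeta$-regularization. The only cosmetic difference is that the paper uses a single space-time mollification $u_\varrho$ rather than combining Steklov averages with spatial mollification, but this does not affect the argument.
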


In what follows, we will write again
$u$ and $\Phi$ in place of $u_{\varepsilon}$ and $\Phi_{\gamma,a}$
respectively, unless otherwise specified. First, we estimate the second integral on the left-hand side of \eqref{eq:est47} from below.
To this end, we note that 
\[
D\left[(H_{\delta}(Du))^{\frac{\gamma+p}{2}}\right] = \frac{\gamma+p}{2} (H_{\delta}(Du))^{\frac{\gamma+p-2}{2}} \sum_{i=1}^{n}\sum_{j=1}^{N} \frac{D_{i}u^{j} D(D_{i}u^{j})}{\vert Du\vert}  \mathds{1}_{\{\vert Du\vert > 1+\delta\}} .
\]
From this identity, we infer 
\begin{align}\left|D\left[(H_{\delta}(Du))^{\frac{\gamma+p}{2}}\right]\right|^{2}  & \leq n \frac{(\gamma+p)^{2}}{4} (H_{\delta}(Du))^{\gamma+p-2} \vert D^{2}u\vert^{2} \mathds{1}_{\{\vert Du\vert > 1+\delta\}}\nonumber\\
 & \leq n \frac{(\gamma+p)^{2}}{4} \vert Du\vert^{\gamma+p-4} \vert D^{2}u\vert^{2} \vert Du\vert^{2} \mathds{1}_{\{\vert Du\vert > 1+\delta\}}\nonumber\\
 & \leq n \frac{(\gamma+p)^{2} (1+\delta)^{2}}{\delta^{2}} \vert Du\vert^{\gamma+p-4} P^{2} \vert D^{2}u\vert^{2}\nonumber\\
 & \leq n \frac{(\gamma+p)^{2} (1+\delta)^{2}}{\delta^{2}} \vert Du\vert^{p-2} \Phi(P) \vert D^{2}u\vert^{2},
\label{eq:est52}
\end{align}
where, in the second to last line, we have used the fact that 
\[
\vert Du\vert \mathds{1}_{\{\vert Du\vert > 1+\delta\}}=\left(P+1+\tfrac{\delta}{2}\right) \mathds{1}_{\{\vert Du\vert > 1+\delta\}}\leq
\frac{2(1+\delta)}{\delta} P \mathds{1}_{\{\vert Du\vert > 1+\delta\}}\leq
\frac{2(1+\delta)}{\delta}P.
\]
Using estimate \eqref{eq:est52} in combination with Young's inequality,
we then obtain 
\begin{align*}
\left|D\left[\eta (H_{\delta}(Du))^{\frac{\gamma+p}{2}}\right]\right|^{2}  & \leq 2 (H_{\delta}(Du))^{\gamma+p} \vert D\eta\vert^{2}+2 \left|D\left[(H_{\delta}(Du))^{\frac{\gamma+p}{2}}\right]\right|^{2}\eta^{2}\\
 & \leq 2 (H_{\delta}(Du))^{\gamma+p} \vert D\eta\vert^{2}+2 n \frac{(\gamma+p)^{2} b^{2}}{\delta^{2}} \vert Du\vert^{p-2} \Phi(P) \vert D^{2}u\vert^{2}\eta^{2},
\end{align*}
from which we deduce 
\begin{align} & \frac{\delta^{2}}{16 n (\gamma+p)^{2} \,b^{2} \,c_{2}} \left|D\left[\eta (H_{\delta}(Du))^{\frac{\gamma+p}{2}}\right]\right|^{2}\omega^{2}\nonumber\\
 & \qquad \leq \frac{\delta^{2}}{8 n (\gamma+p)^{2} \,b^{2} \,c_{2}} (H_{\delta}(Du))^{\gamma+p} \vert D\eta\vert^{2} \omega^{2}+ \frac{1}{8 c_{2}} \vert Du\vert^{p-2} \Phi(P) \vert D^{2}u\vert^{2} \omega^{2} \eta^{2}.
\label{eq:est53}
\end{align}
Now we consider the case $1<p\leq \frac{2n}{n + 2}$. This
implies 
\begin{equation}
\gamma+ \frac{4}{n+2} \leq \gamma+2-p < \gamma+1 < \gamma+p.\label{eq:gammaineq}
\end{equation}
Our goal now is to reduce the exponent of $H_{\frac{\delta}{2}}(Du)$ in
the second integral on the right-hand side of \eqref{eq:est47}. To
this end, we observe that 
\begin{align} 
& \int_{Q_{r}(z_{1})}(H_{\frac{\delta}{2}}(Du))^{\gamma+2} \omega \partial_{t}\omega \eta^{2}\,dz\nonumber\\
 & \qquad \leq 2 a^{2}\int_{Q_{r}(z_{1})}(H_{\frac{\delta}{2}}(Du))^{\gamma} \omega \partial_{t}\omega \eta^{2}\,dz+2\int_{Q_{r}(z_{1})}(H_{\frac{\delta}{2}}(Du))^{\gamma} P^{2} \omega \partial_{t}\omega \eta^{2}\,dz\nonumber\\
 & \qquad \leq 2 a^{2}\int_{Q_{r}(z_{1})}(H_{\delta}(Du))^{\gamma} \omega \partial_{t}\omega \eta^{2}\,dz+2\int_{Q_{r}(z_{1})}\vert Du\vert^{\gamma} P^{2} \omega \partial_{t}\omega \eta^{2}\,dz.
\label{eq:est48}
\end{align}
In addition, for $\vert Du\vert>a$ we have 
\begin{align*}
 \sum_{i=1}^{n}\sum_{j=1}^{N}D_{i} [u^{j} \vert Du\vert^{\gamma-2} D_{i}u^{j} P^{2} \eta^{2}]
 &=
 \vert Du\vert^{\gamma} P^{2} \eta^{2}+\vert Du\vert^{\gamma-2} P^{2} \eta^{2} \sum_{i=1}^{n}\sum_{j=1}^{N}u^{j} D_{i}^{2}u^{j}\\
 & \quad+
 (\gamma-2) \vert Du\vert^{\gamma-3} P^{2} \eta^{2} \sum_{i,k = 1}^{n}\sum_{j,\ell = 1}^{N}u^{j} D_{i}u^{j} \frac{D_{k}u^{\ell} D_{i}D_{k}u^{\ell}}{\vert Du\vert}\\
 & \quad+
 2\vert Du\vert^{\gamma-2}P \eta^{2} \sum_{i,k = 1}^{n}\sum_{j,\ell = 1}^{N}u^{j} D_{i}u^{j} \frac{D_{k}u^{\ell} D_{i}D_{k}u^{\ell}}{\vert Du\vert}\\
 & \quad+
 2\vert Du\vert^{\gamma-2}P^{2} \eta \sum_{i=1}^{n}\sum_{j=1}^{N}u^{j} D_{i}u^{j} D_{i}\eta .
\end{align*}
Using the above identity to estimate the last integral of \eqref{eq:est48}
and taking into account that $\eta\in C_{0}^{\infty}(B_{r}(x_{1}))$ and $\omega$ is independent of the $x$-variable,
we obtain 
\begin{align} 
 \int_{Q_{r}(z_{1})}\vert Du\vert^{\gamma} P^{2} \omega \partial_{t}\omega \eta^{2} \, dz
&\leq \Vert u\Vert_{L^{\infty}(Q_{r}(z_{1}))} \int_{Q_{r}(z_{1})}\vert Du\vert^{\gamma-2} \vert D^{2}u\vert P^{2} \omega \partial_{t}\omega \eta^{2}\,dz\nonumber\\
 & \quad + 
 \vert\gamma-2\vert \,\Vert u\Vert_{L^{\infty}(Q_{r}(z_{1}))} \int_{Q_{r}(z_{1})}\vert Du\vert^{\gamma-2} \vert D^{2}u\vert P^{2} \omega \partial_{t}\omega \eta^{2}\,dz\nonumber\\
 &  \quad + 
 2\,\Vert u\Vert_{L^{\infty}(Q_{r}(z_{1}))} \int_{Q_{r}(z_{1})}\vert Du\vert^{\gamma-1} \vert D^{2}u\vert P \omega \partial_{t}\omega \eta^{2}\,dz\nonumber\\
  &\quad + 
  2\,\Vert u\Vert_{L^{\infty}(Q_{r}(z_{1}))} \int_{Q_{r}(z_{1})}\vert Du\vert^{\gamma-1} P^{2} \omega \partial_{t}\omega \eta \vert D\eta\vert\,dz\nonumber\\
 & \leq
 (\gamma+5)\Vert u\Vert_{L^{\infty}(Q_{r}(z_{1}))} \int_{Q_{r}(z_{1})}\vert Du\vert^{\gamma-1} \vert D^{2}u\vert P \omega \partial_{t}\omega \eta^{2}\,dz\nonumber\\
 &  \quad + 
 2\,\Vert u\Vert_{L^{\infty}(Q_{r}(z_{1}))} \int_{Q_{r}(z_{1})}\vert Du\vert^{\gamma+1} \omega \partial_{t}\omega \eta \vert D\eta\vert\,dz.
\label{eq:est49}
\end{align}
Joining estimates \eqref{eq:est48} and \eqref{eq:est49} and applying
Young's inequality, we then have 
\begin{align*}
 & \int_{Q_{r}(z_{1})}(H_{\frac{\delta}{2}}(Du))^{\gamma+2} \omega \partial_{t}\omega \eta^{2}\,dz\\
 & \qquad \leq
 2a^{2}\int_{Q_{r}(z_{1})}(H_{\delta}(Du))^{\gamma} \omega \partial_{t}\omega \eta^{2}\,dz+4\, \Vert u\Vert_{L^{\infty}(Q_{r}(z_{1}))} \int_{Q_{r}(z_{1})}\vert Du\vert^{\gamma+1} \omega \partial_{t}\omega \eta \vert D\eta\vert\,dz\\
 & \qquad\qquad +
 10(\gamma+1)\Vert u\Vert_{L^{\infty}(Q_{r}(z_{1}))} \int_{Q_{r}(z_{1})}\vert Du\vert^{\gamma-1} \vert D^{2}u\vert P \omega \partial_{t}\omega \eta^{2}\,dz\\
 & \qquad =
 2a^{2}\int_{Q_{r}(z_{1})}(H_{\delta}(Du))^{\gamma} \omega \partial_{t}\omega \eta^{2}\,dz +
 4\,\Vert u\Vert_{L^{\infty}(Q_{r}(z_{1}))} \int_{Q_{r}(z_{1})}\vert Du\vert^{\gamma+1} \omega \partial_{t}\omega \eta \vert D\eta\vert\,dz\\
 & \qquad\qquad +
 10(\gamma+1)\Vert u\Vert_{L^{\infty}(Q_{r}(z_{1}))} \int_{Q_{r}(z_{1})}(\vert Du\vert^{\frac{\gamma+2-p}{2}} \partial_{t}\omega \eta) (\vert Du\vert^{\frac{p-2}{2}} \vert D^{2}u\vert \vert Du\vert^{\frac{\gamma-2}{2}}P \omega \eta)\,dz\\
 & \qquad \leq
 2a^{2}\int_{Q_{r}(z_{1})}(H_{\delta}(Du))^{\gamma} \omega \partial_{t}\omega \eta^{2}\,dz +
 4\,\Vert u\Vert_{L^{\infty}(Q_{r}(z_{1}))} \int_{Q_{r}(z_{1})}\vert Du\vert^{\gamma+1} \omega \partial_{t}\omega \eta \vert D\eta\vert\,dz\\
 & \qquad\qquad +
 c(\gamma+1)^{2}\Vert u\Vert_{L^{\infty}(Q_{r}(z_{1}))}^{2} \int_{Q_{r}(z_{1})}\vert Du\vert^{\gamma+2-p} (\partial_{t}\omega)^{2} \eta^{2}\,dz\\
 & \qquad\qquad +
 \frac{1}{8c_{2}}\int_{Q_{r}(z_{1})}\vert Du\vert^{p-2} \vert D^{2}u\vert^{2} \Phi(P) \omega^{2} \eta^{2}\,dz.
\end{align*}
Now we notice that $a^{2}\leq b (H_{\delta}(Du))^{p}$, $\vert Du\vert^{\gamma+1}\leq(H_{\delta}(Du))^{\gamma+p}$ and $\vert Du\vert^{\gamma+2-p}\le(H_{\delta}(Du))^{\gamma+p}$
(by virtue of (\ref{eq:gammaineq})).
Using this information and Young's inequality, from the previous estimate we deduce
\begin{align}\label{eq:est50}
\int_{Q_{r}(z_{1})} & (H_{\frac{\delta}{2}}(Du))^{\gamma+2}\omega\partial_{t}\omega\eta^{2}\,dz\nonumber\\
 &\leq
 2b\int_{Q_{r}(z_{1})}(H_{\delta}(Du))^{\gamma+p}\omega\partial_{t}\omega\eta^{2}\,dz +
 4\,\Vert u\Vert_{L^{\infty}(Q_{r}(z_{1}))}\int_{Q_{r}(z_{1})}(H_{\delta}(Du))^{\gamma+p}\omega\partial_{t}\omega\eta\vert D\eta\vert\,dz
  \nonumber\\
 & \quad+
 c(\gamma+1)^{2} \Vert u\Vert_{L^{\infty}(Q_{r}(z_{1}))}^{2} \int_{Q_{r}(z_{1})}(H_{\delta}(Du))^{\gamma+p} (\partial_{t}\omega)^{2} \eta^{2}\,dz\nonumber\\
 & \quad+
 \frac{1}{8c_{2}}\int_{Q_{r}(z_{1})}\vert Du\vert^{p-2} \vert D^{2}u\vert^{2} \Phi(P) \omega^{2} \eta^{2}\,dz
 \nonumber\\
 & \leq
 2b\int_{Q_{r}(z_{1})}(H_{\delta}(Du))^{\gamma+p}[\omega \partial_{t}\omega \eta^{2} +
 \omega^{2}\vert D\eta\vert^{2}]\,dz 
 \nonumber\\
 & \quad+
 c(\gamma+1)^{2}\Vert u\Vert_{L^{\infty}(Q_{r}(z_{1}))}^{2} \int_{Q_{r}(z_{1})}(H_{\delta}(Du))^{\gamma+p} (\partial_{t}\omega)^{2} \eta^{2}\,dz\nonumber\\
 & \quad+
 \frac{1}{8c_{2}}\int_{Q_{r}(z_{1})}\vert Du\vert^{p-2} \vert D^{2}u\vert^{2} \Phi(P) \omega^{2} \eta^{2}\,dz.
\end{align}

At this point, in order
to deal jointly with the cases where $p$ is greater or less than
the critical exponent $\frac{2n}{n + 2}$, we introduce
a new exponent $\mathfrak{p}$, defined as follows: 
\begin{equation}
\mathfrak{p}:=\begin{cases}
p & \text{if }  1<p\leq \frac{2n}{n + 2},\\
2 & \text{if }  \frac{2n}{n + 2}<p<2,\\
p & \text{if }  p\geq2.
\end{cases}\label{eq:newexponent}
\end{equation}
Combining \eqref{eq:est47} and \eqref{eq:est50}, we obtain in the
case $1<p\leq \frac{2n}{n + 2}$ the following
inequality 
\begin{align} 
	& \frac{\delta^{2}}{4b^{2}(\gamma+2)}
	\sup\limits _{\tau\in(t_{1}-r^{2},t_{1})}
	\int_{B_{r}(x_{1})\times\{\tau\}}(|Du|^{\gamma+2}-b^{\gamma+2})_{+}  
	\omega^{2}\eta^{2}\,dx\nonumber\\
 	& +
	\frac{1}{8c_{2}}\int_{Q_{r}(z_{1})}|Du|^{p-2} \vert D^{2}u\vert^{2} \Phi(P) \omega^{2}\eta^{2}\,dz\nonumber\\
 	& \qquad \leq
	c(\gamma+1)^{2}\int_{Q_{r}(z_{1})}(H_{\delta}(Du))^{\gamma+\mathfrak{p}} [\omega^{2}\vert D\eta\vert^{2} +\omega \partial_{t}\omega \eta^{2} +(\Vert u\Vert_{L^{\infty}(Q_{r}(z_{1}))}^{2} (\partial_{t}\omega)^{2}+\omega^{2}) \eta^{2}]\,dz\nonumber\\
 	& \qquad\qquad +
	c(\gamma+1)^{2}\int_{Q_{r}(z_{1})}|f|^{2} (H_{\delta}(Du))^{\gamma+1} \omega^{2}\eta^{2}\,dz.
\label{eq:est51}
\end{align}
For convenience of notation, we now use the indicator function
\[
\mathds{1}_{(1,\frac{2n}{n+2}]}(p):=\begin{cases}
\begin{array}{cc}
1 & \mathrm{if}\,\,1<p\leq\frac{2n}{n+2},\\
0 & \mathrm{otherwise},\,\,\,\,\,\,\,\,\,\,\,\,
\end{array}\end{cases}
\]
to indicate that the terms multiplied by it need to be taken into
account only in the subcritical case $1<p\leq\frac{2n}{n+2}$. Integrating \eqref{eq:est53} over $Q_{r}(z_{1})$ and combining the resulting inequality with \eqref{eq:est51} in the case $1<p\leq \frac{2n}{n+2}$, after some algebraic manipulation, for every $p>1$ we get the following estimate:\medskip{}

\begin{prop}[Caccioppoli-type inequality]\label{prop:Caccioppoli}
Let $p>1$, $\gamma>0$,
$\delta>0$, $b=1+\delta$ and $0<\varepsilon<\min \{\frac{1}{2},(\frac{\delta}{4})^{p-1}\}$.
Moreover, let $\mathfrak{p}$ be defined as in $(\ref{eq:newexponent})$
and assume that 
\[
u_{\varepsilon}\in C^{0}\left([t_{0}-R^{2},t_{0}];L^{2}\left(B_{R}(x_{0}),\mathbb{R}^{N}\right)\right)\cap L^{p}\left(t_{0}-R^{2},t_{0};W^{1,p}\left(B_{R}(x_{0}),\mathbb{R}^{N}\right)\right)
\]
is the unique energy solution of problem
$(\ref{eq:CAUCHYDIR})$ with $Q'=Q_{R}(z_{0})\Subset\Omega_{T}$ and
$u$ a weak solution of $\mathrm{(\ref{eq:syst})}$, satisfying the
additional assumption of Remark~\ref{rmk2}
if $1<p\le\frac{2n}{n + 2}$. Then, for any parabolic
cylinder $Q_{r}(z_{1})\Subset Q_{R}(z_{0})$
with $r\in(0,1)$ and any cut-off functions $\eta\in C_{0}^{\infty}(B_{r}(x_{1}),[0,1])$
and $\omega\in W^{1,\infty}(\mathbb{R},[0,1])$ satisfying $\omega(t_{1}-r^{2})=0$
and $\partial_{t}\omega\geq0$, the estimate
\begin{align*} 
\frac{\gamma+p}{2} & \sup\limits_{\tau\in(t_{1}-r^{2},t_{1})}\int_{B_{r}(x_{1})\times\{\tau\}}(|Du_{\varepsilon}|^{\gamma+2}-b^{\gamma+2})_{+}  \omega^{2}\eta^{2} \,dx +
\int_{Q_{r}(z_{1})}\left|D\Big[\eta(H_{\delta}(Du_{\varepsilon}))^{\frac{\gamma+p}{2}}\Big]\right|^{2}\omega^{2}\, dz\nonumber\\
 & \leq
k (\gamma+p)^{4}\int_{Q_{r}(z_{1})}(H_{\delta}(Du_{\varepsilon}))^{\gamma+\mathfrak{p}} \big[\omega^{2}\vert D\eta\vert^{2} +\omega \partial_{t}\omega \eta^{2} +\omega^{2}\eta^{2} \big]\,dz
\\ & \qquad +
k (\gamma+p)^{4}\int_{Q_{r}(z_{1})}|f|^{2} (H_{\delta}(Du_{\varepsilon}))^{\gamma+1} \omega^{2}\eta^{2}\,dz \\
& \qquad + \mathds{1}_{(1,\frac{2n}{n+2}]}(p) \,k (\gamma+p)^{4} \,\Vert u_{\varepsilon}\Vert_{L^{\infty}(Q_{r}(z_{1}))}^{2}\int_{Q_{r}(z_{1})}(H_{\delta}(Du_{\varepsilon}))^{\gamma+\mathfrak{p}}   (\partial_{t}\omega)^{2} \eta^{2}\,dz
\end{align*}
holds true for some constant $k>1$ depending on $n$, $p$,
$\delta$, $C_{1}$ and $K$. 
\end{prop}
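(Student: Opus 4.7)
My plan is to test the Steklov-averaged weak formulation of $(\ref{eq:CAUCHYDIR})_1$ with $D_i \tilde{\phi}_{i,\varrho}$, where $\tilde{\phi}_{i,\varrho} := D_i u_\varrho \cdot \Phi(P_\varrho)\,\psi$ and $\psi(x,t) = \chi(t)\omega^2(t)\eta^2(x)$ is a product of non-negative cut-offs. After differentiating the equation in $x_i$, integrating by parts and passing to the limit $\varrho \searrow 0$ --- an operation legitimised by Lemma~\ref{lem:L4} and the second-order regularity in Theorem~\ref{thm:regul} --- I obtain an inequality of the form
\[
J_0+J_1+J_2+\cdots+J_9\leq 0,
\]
where $J_0$ collects the time derivative, $J_1 = \int \mathcal{A}_\varepsilon(Du)(D^2u,D^2u)\Phi(P)\psi\,dz$ is the main elliptic term, $J_2$ is the term arising from differentiating $\Phi$ along $DP$, and $J_3,\ldots,J_9$ are lower-order or inhomogeneity contributions.

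The first non-trivial step is to check that $J_2\ge 0$ by expanding $D_{\xi_k^j}(A_\varepsilon)^\ell_m$ explicitly and using the Cauchy--Schwarz inequality together with $\partial_s F,\partial_{ss}F\ge 0$; this allows me to discard $J_2$. The main ellipticity bound of Lemma~\ref{lem:lemma app1} gives $J_1\ge c_2^{-1}\int |Du|^{p-2}|D^2u|^2\Phi(P)\psi\,dz$, and I would then estimate $|J_3|,|J_7|,|J_9|$ using Lemmas~\ref{lem:lemma app1}--\ref{lem:lemma app2} and \eqref{eq:spgraAeps}, with Young's inequality tuned by a small parameter $\kappa$ to absorb all second-derivative contributions into $J_1$. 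For $J_4,J_5,J_6,J_8$ the factor $\Phi'(P)$ creates singular behaviour near $\{|Du|=a\}$, so I would split each integral into $\{|Du|\ge 2a\}$ (where $|Du|\le 2P$ and \eqref{PhiDt} yields a clean bound) and the collar $\{a<|Du|<2a\}$ (where I introduce an auxiliary parameter $\zeta>0$, balance it by Young's inequality, then let $\zeta\searrow 0$ using $|Du|^{p-2}|D^2u|^2\in L^1_{\mathrm{loc}}$ from Theorem~\ref{thm:regul}). Finally, for the time slot I take $\chi$ to be the Lipschitz cut-off equal to $1$ on $(-\infty,\tau]$ and dropping linearly on $(\tau,\tau+\vartheta]$; sending $\vartheta\searrow 0$ and then taking the supremum in $\tau$ converts $-\int J_0$ into the $L^\infty_t$-term on the left-hand side.

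At this stage I would rewrite the absorbed second-derivative quantity as a $W^{1,2}$-control on $\eta(H_\delta(Du))^{(\gamma+p)/2}$: by the chain rule and the inequality $|Du|\mathds{1}_{\{|Du|>1+\delta\}}\le \tfrac{2(1+\delta)}{\delta}P$, I would obtain \eqref{eq:est52}--\eqref{eq:est53}, from which $\int|D[\eta(H_\delta(Du))^{(\gamma+p)/2}]|^2\omega^2\,dz$ is controlled by the ellipticity bound up to a harmless $(H_\delta(Du))^{\gamma+p}|D\eta|^2\omega^2$ term. In the super-quadratic case $p\ge 2$ and the super-critical subquadratic case $\tfrac{2n}{n+2}<p<2$, the remaining time-derivative remainder $(H_{\delta/2}(Du))^{\gamma+2}$ is immediately dominated by $(H_\delta(Du))^{\gamma+\mathfrak{p}}$ for the natural choice $\mathfrak{p}\in\{p,2\}$ from \eqref{eq:newexponent}, and nothing further is required.

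The delicate obstacle is the sub-critical case $1<p\le\tfrac{2n}{n+2}$, where $\gamma+2>\gamma+p=\gamma+\mathfrak{p}$ and the naive bound would leave the wrong exponent on the right-hand side, blocking the subsequent Moser iteration. To trade $\gamma+2$ for $\gamma+p$ I plan to exploit the essential boundedness of $u_\varepsilon$ (established via the maximum principle of Section~\ref{sec:comparison}) and integrate by parts on each time slice via the identity
\[
\sum_{i=1}^n\sum_{j=1}^N D_i\bigl[u^j|Du|^{\gamma-2}D_iu^j\, P^2\eta^2\bigr]=|Du|^\gamma P^2\eta^2+R,
\]
where the remainder $R$ is linear in $D^2u$ and $D\eta$ and carries an explicit $u$-factor. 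Against $\omega\,\partial_t\omega$ the divergence integrates to zero and Young's inequality converts the $|Du|^{\gamma+2}$ density into a $|Du|^{\gamma+p}$ density, at the cost of an extra $\|u_\varepsilon\|_{L^\infty(Q_r)}^2(\partial_t\omega)^2\eta^2(H_\delta(Du))^{\gamma+p}$ term and a further $|Du|^{p-2}|D^2u|^2\Phi(P)\eta^2\omega^2$ term that is reabsorbed into $J_1$. The chain $\gamma+\tfrac{4}{n+2}\le\gamma+2-p<\gamma+1<\gamma+p$, valid precisely because $p\le\tfrac{2n}{n+2}$, is what makes this exponent-swap possible. Unifying the three regimes by the exponent $\mathfrak{p}$ of \eqref{eq:newexponent} and collecting all constants into a single $k=k(n,p,\delta,C_1,K)$ then yields the proposition.
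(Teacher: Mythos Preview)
Your proposal is correct and follows essentially the same route as the paper: the same test function $\tilde\phi_{i,\varrho}=D_iu_\varrho\,\Phi(P_\varrho)\,\chi\omega^2\eta^2$, the same decomposition $J_0+\cdots+J_9\le 0$ with $J_2\ge 0$ discarded, the same collar-splitting $\{a<|Du|<2a\}\cup\{|Du|\ge 2a\}$ with the auxiliary $\zeta\searrow 0$ trick for the $\Phi'(P)$-terms, the same choice of $\chi$ and limit $\vartheta\searrow 0$, the same chain-rule conversion \eqref{eq:est52}--\eqref{eq:est53} to the $W^{1,2}$-norm of $\eta(H_\delta(Du))^{(\gamma+p)/2}$, and in the subcritical regime the same integration-by-parts identity to swap the exponent $\gamma+2$ for $\gamma+\mathfrak{p}$ at the cost of the $\|u_\varepsilon\|_{L^\infty}^2(\partial_t\omega)^2$-term. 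Nothing is missing and no step differs materially from the paper's argument.
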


\subsection{Step 2: reverse Hölder-type inequalities }\label{subsec:reverse-Hölder}

Another step towards
the local boundedness of $\vert Du\vert$ consists in
using the estimate in Proposition~\ref{prop:Caccioppoli} to derive two different reverse Hölder-type inequalities, which will
in turn be employed to start the Moser iteration procedure. With this
aim in mind, in what follows we keep the assumptions, notations and
cylinders used for the proof of Proposition \ref{prop:Caccioppoli}.

Let us start with the case 
$p>\frac{2n}{n + 2}$. Applying Hölder's inequality
with exponents $\left(\frac{\hat{n}}{\hat{n}-2},\frac{\hat{n}}{2}\right)$
and the Sobolev embedding theorem on the time slices $B^{t}:=B_{r}(x_{1})\times\{t\}$
for almost every $t\in(t_{1}-r^{2},t_{1})$, we obtain 
\begin{align} 
& \fint_{B^{t}}\omega^{2+\frac{4}{\hat{n}}} \eta^{2+\frac{4}{\hat{n}}} (H_{\delta}(Du))^{\gamma+p+\frac{2 (\gamma+2)}{\hat{n}}}\,dx\nonumber\\
 & \qquad \leq
 (\omega(t))^{2+\frac{4}{\hat{n}}} \bigg[\fint_{B^{t}}\big[\eta (H_{\delta}(Du))^{\frac{\gamma+p}{2}}\big]^{\frac{2 \hat{n}}{\hat{n}-2}}\,dx\bigg]^{\frac{\hat{n}-2}{\hat{n}}}\bigg[\fint_{B^{t}}\eta^{2} (H_{\delta}(Du))^{\gamma+2}\,dx\bigg]^{\frac{2}{\hat{n}}}\nonumber\\
 & \qquad \leq C_{S} \,r^{2} \bigg[\fint_{B^{t}}\big\vert D\big[\eta (H_{\delta}(Du))^{\frac{\gamma+p}{2}}\big]\big\vert^{2} \omega^{2}\,dx\bigg]\bigg[\fint_{B^{t}}\eta^{2} (H_{\delta}(Du))^{\gamma+2} \omega^{2}\,dx\bigg]^{\frac{2}{\hat{n}}},
\label{eq:est54}
\end{align}
where $C_{S}\equiv C_{S}(N,n,p)\geq1$. Now, using the definitions
of $H_{\delta}$ and $b$, the properties of $\omega$ and $\eta$,
and applying Proposition~\ref{prop:Caccioppoli}, we can estimate
the second mean value on the right-hand side of \eqref{eq:est54}
as follows: 
\begin{align*} 
\fint_{B^{t}} \eta^{2} [H_{\delta}(Du(x,t))]^{\gamma+2}\omega^{2}\,dx 
&\leq
\frac{c(n)}{r^{n}}
\sup\limits_{\tau\in(t_{1}-r^{2},t_{1})}\int_{B_r(x_1)\times\{\tau\}}(H_{\delta}(Du))^{\gamma+2}\omega^{2}\eta^{2}\,dx\nonumber\\
 &=
 \frac{c(n)}{r^{n}}\sup\limits_{\tau\in(t_{1}-r^{2},t_{1})}\int_{B_r(x_1)\times\{\tau\}}[b^{\gamma+2}+(\vert Du\vert^{\gamma+2}-b^{\gamma+2})_{+}]\omega^{2}\eta^{2}\,dx\nonumber\\
 & \leq
 \frac{\tilde{c}(\gamma+p)^{3}}{r^{n}}\bigg[
 \mathbf{I}_{1} +
 \int_{Q_{r}(z_{1})}|f|^{2} (H_{\delta}(Du))^{\gamma+1} \omega^{2}\eta^{2}\,dz\bigg],
\end{align*}
where $\tilde{c}\equiv\tilde{c}(n,p,\delta,C_{1},K)>1$ and
\[
    \mathbf{I}_{1}
    :=
    b^{\gamma+2}\vert\mathrm{spt} \eta\vert+
 \int_{Q_{r}(z_{1})}(H_{\delta}(Du))^{\gamma+\mathfrak{p}}
 \big[\omega^{2}(\vert D\eta\vert^{2} + \eta^{2}) + \omega\partial_{t}\omega\eta^{2}\big]\,dz.
\]
Inserting the preceding inequality into \eqref{eq:est54}, integrating with respect
to time over $(t_{1}-r^{2},t_{1})$ and using Proposition~\ref{prop:Caccioppoli} again, we obtain 
\begin{align} 
\mathbf{I}_{2}
&:=
\fint_{Q_{r}(z_{1})} \omega^{2+\frac{4}{\hat{n}}} \eta^{2+\frac{4}{\hat{n}}} (H_{\delta}(Du))^{\gamma+p+\frac{2 (\gamma+2)}{\hat{n}}}\,dz\nonumber\\
 & \leq
 \frac{C_{2}(\gamma+p)^{4+\frac{6}{\hat{n}}}}{r^{n(1+\frac{2}{\hat{n}})}} \Bigg[
 \mathbf{I}_{1}^{1+\frac{2}{\hat{n}}} +
 \bigg(\int_{Q_{r}(z_{1})}|f|^{2} (H_{\delta}(Du))^{\gamma+1} \omega^{2}\eta^{2} dz\bigg)^{1+\frac{2}{\hat{n}}}\Bigg],
\label{eq:est56}
\end{align}
where $C_{2}\equiv C_{2}(N,n,\hat{n},p,\delta,C_{1},K)>1.$ For a
fixed $\mu>0$ that will be chosen later, we now split the cylinder
$Q_{r}(z_{1})$ as follows 
\[
Q_{r}(z_{1})=\left\{ z\in Q_{r}(z_{1}):\vert f\vert^{2}
\leq
\exp\left( \mu^{-1} \right)-e\right\}
 \cup 
\left\{ z\in Q_{r}(z_{1}):\vert f\vert^{2}>\exp\left( \mu^{-1} \right)-e\right\} 
=:
E_{\mu}^{1} \cup E_{\mu}^{2} ,
\]
so that 
\begin{align}
\int_{Q_{r}(z_{1})}|f|^{2}(H_{\delta}(Du))^{\gamma+1}\omega^{2}\eta^{2}\,dz & =\int_{E_{\mu}^{1}}|f|^{2}(H_{\delta}(Du))^{\gamma+1}\omega^{2}\eta^{2}\,dz+
\int_{E_{\mu}^{2}}|f|^{2}(H_{\delta}(Du))^{\gamma+1}\omega^{2}\eta^{2}\,dz\nonumber\\
 & =: 
 J_{10}+J_{11} .
\label{eq:est57}
\end{align}
From the definition of $E_{\mu}^{1}$, it immediately follows that
\begin{equation}
J_{10} \leq \exp\left(\frac{1}{\mu}\right)\int_{Q_{r}(z_{1})}(H_{\delta}(Du))^{\gamma+1} \omega^{2}\eta^{2} dz \leq \exp\left(\frac{1}{\mu}\right)\int_{Q_{r}(z_{1})}(H_{\delta}(Du))^{\gamma+\mathfrak{p}} \omega^{2}\eta^{2}\,dz .\label{eq:J10}
\end{equation}
In order to deal with the term $J_{11}$, we first observe that 
\begin{equation}
\log^{\frac{2\alpha}{\hat{n}+2}}(e+\vert f\vert^{2})
<
\log^{\frac{2\alpha}{\hat{n}+2}}\big((e+\vert f\vert)^{2}\big)
=
2^{\frac{2\alpha}{\hat{n}+2}} \log^{\frac{2\alpha}{\hat{n}+2}}(e+\vert f\vert)\label{eq:logineq}
\end{equation}
and 
\begin{equation}
(\gamma+1)\left(1+\frac{2}{\hat{n}}\right)
<
\gamma+p+\frac{2 (\gamma+2)}{\hat{n}} .\label{eq:holdineq}
\end{equation}
Using (\ref{eq:logineq}) and (\ref{eq:holdineq}), Hölder's inequality,
the definition of $E_{\mu}^{2}$ and the assumption $|f|\in L^{\hat{n}+2}\log^{\alpha}L_{loc}(\Omega_{T})$,
we obtain
\begin{align}
J_{11} & =\int_{E_{\mu}^{2}}|f|^{2} \log^{\frac{2\alpha}{\hat{n}+2}}(e+\vert f\vert^{2}) \log^{- \frac{2\alpha}{\hat{n}+2}}(e+\vert f\vert^{2}) (H_{\delta}(Du))^{\gamma+1} \omega^{2}\eta^{2}\,dz\nonumber\\
 & \leq
 2^{\frac{2\alpha}{\hat{n}+2}}\int_{E_{\mu}^{2}}|f|^{2} \log^{\frac{2\alpha}{\hat{n}+2}}(e+\vert f\vert) \log^{- \frac{2\alpha}{\hat{n}+2}}(e+\vert f\vert^{2}) (H_{\delta}(Du))^{\gamma+1} \omega^{2}\eta^{2}\,dz\nonumber\\
 & \leq
 2^{\frac{2\alpha}{\hat{n}+2}} \Bigg[\int_{E_{\mu}^{2}}|f|^{\hat{n}+2} \log^{\alpha}(e+\vert f\vert)\,dz\Bigg]^{\frac{2}{\hat{n}+2}}\cdot\nonumber\\
 & \qquad \cdot
 \Bigg[\int_{E_{\mu}^{2}}\log^{- \frac{2\alpha}{\hat{n}}}(e+\vert f\vert^{2}) (H_{\delta}(Du))^{(\gamma+1)\left(1+\frac{2}{\hat{n}}\right)} \omega^{2+\frac{4}{\hat{n}}} \eta^{2+\frac{4}{\hat{n}}} \,dz\Bigg]^{\frac{\hat{n}}{\hat{n}+2}}\nonumber\\
 & \leq
 2^{\frac{2\alpha}{\hat{n}+2}} \Vert f\Vert_{L^{\hat{n}+2}\log^{\alpha}L(Q_{r}(z_{1}))}^{\theta} \mu^{\frac{2\alpha}{\hat{n}+2}}\Bigg[\int_{E_{\mu}^{2}}(H_{\delta}(Du))^{\gamma+p+\frac{2 (\gamma+2)}{\hat{n}}} \omega^{2+\frac{4}{\hat{n}}} \eta^{2+\frac{4}{\hat{n}}}\,dz\Bigg]^{\frac{\hat{n}}{\hat{n}+2}}\nonumber\\
 & \leq
 2^{\frac{2\alpha}{\hat{n}+2}} \mu^{\frac{2\alpha}{\hat{n}+2}} \Vert f\Vert_{L^{\hat{n}+2}\log^{\alpha}L(Q_{r}(z_{1}))}^{\theta}\, 
 \Bigg[\int_{Q_{r}(z_{1})}(H_{\delta}(Du))^{\gamma+p+\frac{2 (\gamma+2)}{\hat{n}}} \omega^{2+\frac{4}{\hat{n}}} \eta^{2+\frac{4}{\hat{n}}}\,dz\Bigg]^{\frac{\hat{n}}{\hat{n}+2}},
\label{eq:J11}
\end{align}
where $\theta\equiv\theta(\hat{n})>0$. 
Joining $\eqref{eq:est56}-(\ref{eq:J10})$ and
\eqref{eq:J11}, after some algebraic manipulation we arrive at 
\begin{equation}
\mathbf{I}_{2} \leq \frac{C_{3} (\gamma+p)^{4+\frac{6}{\hat{n}}}}{r^{\frac{2 n}{\hat{n}}}} \left[r^{2} \mu^{\frac{2\alpha}{\hat{n}}} \Vert f\Vert_{L^{\hat{n}+2}\log^{\alpha}L(Q_{r}(z_{1}))}^{\frac{\theta(\hat{n}+2)}{\hat{n}}}  \,\mathbf{I}_{2}+\frac{\exp\big(\frac{1}{\mu}\left(1+\frac{2}{\hat{n}}\right)\big)}{r^{n}}  \mathbf{I}_{1}^{1+\frac{2}{\hat{n}}}\right],\label{eq:est58}
\end{equation}
where $C_{3}\equiv C_{3}(N,n,\hat{n},\alpha,p,\delta,C_{1},K)>1.$

At this point, we consider estimate \eqref{eq:est56}
in the case $\Vert f\Vert_{L^{\hat{n}+2}\log^{\alpha}L(Q_{r}(z_{1}))}=0$
and the above inequality if $\Vert f\Vert_{L^{\hat{n}+2}\log^{\alpha}L(Q_{r}(z_{1}))}>0$.
In the latter case, choosing $\mu$ such that 
\[
\frac{C_{3} (\gamma+p)^{4+\frac{6}{\hat{n}}}}{r^{2 (\frac{n}{\hat{n}}-1)}}  \mu^{\frac{2\alpha}{\hat{n}}} \Vert f\Vert_{L^{\hat{n}+2}\log^{\alpha}L(Q_{r}(z_{1}))}^{\frac{\theta(\hat{n}+2)}{\hat{n}}} = \frac{1}{2} ,
\]
from (\ref{eq:est58}) we obtain 
\begin{equation}
\mathbf{I}_{2}
\leq
C_{4}(\gamma+p)^{4+\frac{6}{\hat{n}}}
\exp\left(C_{5}(\gamma+p)^{\frac{2\hat{n}+3}{\alpha}}\right) \mathbf{I}_{1}^{1+\frac{2}{\hat{n}}},\label{eq:est59}
\end{equation}
where the constants on the right-hand side are of the type 
\begin{equation}
    C_{4}
    =
    \frac{C}{r^{n(1+\frac{2}{\hat{n}})}}
    \qquad\mathrm{and}\qquad 
    C_{5}=C\Vert f\Vert_{L^{\hat{n}+2}\log^{\alpha}L(Q_{r}(z_{1}))}^{\frac{\theta (\hat{n}+2)}{2\alpha}} r^{\frac{\hat{n}-n}{\alpha}},\label{eq:auxcons}
\end{equation}
for some constant $C\equiv C(N,n,\hat{n},\alpha,p,\delta,C_{1},K)>1$.
Moreover, from \eqref{eq:est56} it immediately follows that inequality
\eqref{eq:est59} holds true also when $\Vert f\Vert_{L^{\hat{n}+2}\log^{\alpha}L(Q_{r}(z_{1}))}=0$.

Let us now consider the
case $1<p\leq \frac{2n}{n + 2}$. Arguing exactly as
above, this time we arrive at 
\begin{align*}
    \mathbf{I}_{2}
    &\leq
    C_{4}(\gamma+p)^{4+\frac{6}{\hat{n}}}
    \exp\left(C_{5}(\gamma+p)^{\frac{2 \hat{n} +3}{\alpha}}\right) \bigg[b^{\gamma+2}\vert\mathrm{spt}\eta\vert\\
    &\qquad +\int_{Q_{r}(z_{1})}(H_{\delta}(Du))^{\gamma+\mathfrak{p}}
    \big[\omega^{2}(\vert D\eta\vert^{2} + \eta^2) + \omega\partial_{t}\omega\eta^{2} + \Vert u\Vert_{L^{\infty}(Q_{r}(z_{1}))}^{2}(\partial_{t}\omega)^{2} \eta^2\big]\,dz\bigg]^{1+\frac{2}{\hat{n}}},
\end{align*}
where the constants $C_{4}$ and $C_{5}$ are of the same type as
in (\ref{eq:auxcons}).

In summary, we have obtained the following
result:

\begin{prop}[Reverse Hölder-type inequality]
\label{prop:Reverse}
Under the assumptions of Proposition~\ref{prop:Caccioppoli}, for any parabolic cylinder $Q_{r}(z_{1})\Subset Q_{R}(z_{0})$
with $r\in(0,1)$ and any cut-off functions $\eta\in C_{0}^{\infty}(B_{r}(x_{1}),[0,1])$
and $\omega\in W^{1,\infty}(\mathbb{R},[0,1])$ satisfying $\omega(t_{1}-r^{2})=0$
as well as $\partial_{t}\omega\geq0$, the estimate
\begin{align} 
& \fint_{Q_{r}(z_{1})}  
(\omega\eta)^{2+\frac{4}{\hat{n}}}(H_{\delta}(Du_{\varepsilon}))^{\gamma+p+\frac{2(\gamma+2)}{\hat{n}}}\,dz
\leq
C(\gamma+p)^{4+\frac{6}{\hat{n}}}
\exp\left[\frac{C \Vert f\Vert_{L^{\hat{n}+2}\log^{\alpha}L(Q_{r}(z_{1}))}^{\Theta} (\gamma+p)^{\frac{2 \hat{n}+3}{\alpha}}}{r^{\frac{n-\hat{n}}{\alpha}}}\right] 
\nonumber \\ & \qquad \cdot
\bigg[r^{2}\fint_{Q_{r}(z_{1})}(H_{\delta}(Du_{\varepsilon}))^{\gamma+\mathfrak{p}}
\big[\omega^{2}(\vert D\eta\vert^{2} + \eta^2) + \omega\partial_{t}\omega\eta^{2} \big] \, dz
+
\frac{b^{\gamma+2}}{r^{n}}\vert\mathrm{spt}\eta\vert 
\nonumber \\ & \qquad\qquad + 
\mathds{1}_{(1,\frac{2n}{n+2}]}(p) \,\Vert u_{\varepsilon} \Vert_{L^{\infty}(Q_{r}(z_{1}))}^{2} \,r^{2}\fint_{Q_{r}(z_{1})} (H_{\delta}(Du_{\varepsilon}))^{\gamma+\mathfrak{p}}  (\partial_{t}\omega)^{2}\eta^{2} \, dz\bigg]^{1+\frac{2}{\hat{n}}}
\label{eq:Hold2}
\end{align} holds true for some constants $C\equiv C(N,n,\hat{n},\alpha,p,\delta,C_{1},K)>1$ and $\Theta\equiv\Theta(\hat{n},\alpha)>0$.
\end{prop}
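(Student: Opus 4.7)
The plan is to derive the reverse Hölder-type inequality by combining Sobolev embedding on time slices with the Caccioppoli-type inequality from Proposition~\ref{prop:Caccioppoli}, and then handling the forcing term via a level-set decomposition that exploits the $L^{\hat n+2}\log^{\alpha}L$ assumption on $f$. I treat the supercritical case $p>\frac{2n}{n+2}$ first and indicate at the end how the subcritical case only contributes analogous extra terms.

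First, for almost every $t\in(t_1-r^2,t_1)$ I would apply H\"older's inequality on the time slice $B^t=B_r(x_1)\times\{t\}$ with exponents $\bigl(\frac{\hat n}{\hat n-2},\frac{\hat n}{2}\bigr)$ to split the integrand $(\omega\eta)^{2+\frac{4}{\hat n}}(H_\delta(Du))^{\gamma+p+\frac{2(\gamma+2)}{\hat n}}$ into a part of the form $[\eta(H_\delta(Du))^{(\gamma+p)/2}]^{2\hat n/(\hat n-2)}$ and a part of the form $\eta^2(H_\delta(Du))^{\gamma+2}$. On the first factor I apply the Sobolev embedding $W^{1,2}(B_r)\hookrightarrow L^{2\hat n/(\hat n-2)}(B_r)$, which produces a factor $r^{2}$ and the $L^{2}$ norm of $D[\eta(H_\delta(Du))^{(\gamma+p)/2}]$. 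The second factor is estimated via the supremum-in-time bound of Proposition~\ref{prop:Caccioppoli} after writing $(H_\delta(Du))^{\gamma+2}\leq b^{\gamma+2}+(|Du|^{\gamma+2}-b^{\gamma+2})_{+}$, which is where the $b^{\gamma+2}|\mathrm{spt}\,\eta|$ contribution enters the quantity $\mathbf{I}_{1}$. Integrating in time and using the $L^{2}$ bound on the spatial gradient from Proposition~\ref{prop:Caccioppoli} yields
\[
\mathbf{I}_{2}\leq \frac{C(\gamma+p)^{4+\frac{6}{\hat n}}}{r^{2n/\hat n}}\biggl[\mathbf{I}_{1}^{1+\frac{2}{\hat n}}+\Bigl(\int_{Q_r(z_1)}|f|^{2}(H_\delta(Du))^{\gamma+1}\omega^{2}\eta^{2}\,dz\Bigr)^{1+\frac{2}{\hat n}}\biggr].
\]

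The main technical difficulty is handling the forcing integral so that it can be partially reabsorbed. I would split $Q_r(z_1)=E_\mu^{1}\cup E_\mu^{2}$ with $E_\mu^{1}=\{|f|^{2}\le e^{1/\mu}-e\}$ for a parameter $\mu>0$ to be chosen. On $E_\mu^{1}$ the bound $|f|^{2}\leq e^{1/\mu}$ together with $(H_\delta(Du))^{\gamma+1}\leq (H_\delta(Du))^{\gamma+\mathfrak p}$ (valid since $\mathfrak p\geq 1$) gives a term of the right form. On $E_\mu^{2}$, the elementary inequality $\log^{-\frac{2\alpha}{\hat n+2}}(e+|f|^{2})\leq\mu^{\frac{2\alpha}{\hat n+2}}$, Hölder's inequality with exponents $\bigl(\frac{\hat n+2}{2},\frac{\hat n+2}{\hat n}\bigr)$, the integrability assumption $f\in L^{\hat n+2}\log^{\alpha}L$, and the fact that $(\gamma+1)\bigl(1+\tfrac{2}{\hat n}\bigr)<\gamma+p+\tfrac{2(\gamma+2)}{\hat n}$ produce a contribution proportional to $\mu^{\frac{2\alpha}{\hat n+2}}\Vert f\Vert_{L^{\hat n+2}\log^\alpha L(Q_r(z_1))}^{\theta}\,\mathbf{I}_{2}^{\hat n/(\hat n+2)}$, which after raising to the power $1+\tfrac{2}{\hat n}$ becomes a multiple of $\mathbf{I}_{2}$ itself.

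The decisive step is then the reabsorption: I would choose $\mu$ so that the coefficient in front of the resulting $\mathbf{I}_{2}$ on the right-hand side equals $\tfrac{1}{2}$; this forces $\mu$ to be an explicit power of $(\gamma+p)$, $r$ and $\Vert f\Vert_{L^{\hat n+2}\log^\alpha L(Q_r(z_1))}$. Plugging this choice of $\mu$ back into the $\exp(1/\mu)$ factor coming from the $E_\mu^{1}$-estimate produces exactly the exponential $\exp\bigl(C\Vert f\Vert^{\Theta}(\gamma+p)^{(2\hat n+3)/\alpha}\, r^{(\hat n-n)/\alpha}\bigr)$ with $\Theta=\theta(\hat n+2)/(2\alpha)$, and absorbing the $\mathbf{I}_{2}$-term into the left-hand side leaves the desired bound in terms of $\mathbf{I}_{1}^{1+2/\hat n}$. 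In the subcritical range $1<p\leq \tfrac{2n}{n+2}$, the Caccioppoli inequality carries the additional term $\Vert u_\varepsilon\Vert_{L^\infty}^{2}\int(H_\delta(Du_\varepsilon))^{\gamma+\mathfrak p}(\partial_t\omega)^{2}\eta^{2}\,dz$; this propagates verbatim through the above steps exactly like the term involving $\omega\,\partial_t\omega\,\eta^{2}$, thereby producing the indicator-function contribution $\mathds{1}_{(1,\frac{2n}{n+2}]}(p)$ in the final estimate \eqref{eq:Hold2}.
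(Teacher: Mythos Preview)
Your proposal is correct and follows essentially the same approach as the paper: H\"older plus Sobolev on time slices, two applications of the Caccioppoli inequality (once for the $\sup$-term with the decomposition $(H_\delta(Du))^{\gamma+2}\le b^{\gamma+2}+(|Du|^{\gamma+2}-b^{\gamma+2})_+$, once for the gradient term), the level-set splitting $E_\mu^1\cup E_\mu^2$ with the same threshold, the $L^{\hat n+2}\log^{\alpha}L$ estimate on $E_\mu^2$ via H\"older with exponents $(\frac{\hat n+2}{2},\frac{\hat n+2}{\hat n})$, and the choice of $\mu$ forcing the $\mathbf{I}_2$-coefficient to equal $\tfrac12$. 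The treatment of the subcritical case, the identification $\Theta=\theta(\hat n+2)/(2\alpha)$, and the way the exponential factor arises from plugging $\mu$ back into $e^{1/\mu}$ all match the paper's argument.
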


\subsection{Step 3: the iteration for \texorpdfstring{$\boldsymbol{p>\frac{2n}{n+2}}$}{p>2n/(n+2)} \label{subsec:iteration1}}

Thanks to Proposition \ref{prop:Reverse}, we can now start the Moser iteration procedure. Once again, we shall keep both the assumptions and the notations used in Propositions \ref{prop:Caccioppoli} and \ref{prop:Reverse}. At this step, we assume that $p>\frac{2n}{n+2}$. By virtue of (\ref{eq:nhat}) and (\ref{eq:betacondition}), this implies that $p>\frac{2 \hat{n}}{\hat{n}+2}$. We define by induction a sequence $\{\gamma_{k}\}_{k\in\mathbb{N}_{0}}$ by
letting $\gamma_{0}=0$ and 
\[
\gamma_{k}:=\begin{cases}
\left(1+\frac{2}{\hat{n}}\right)\gamma_{k-1}+\frac{4}{\hat{n}}-2+p & \text{if }\frac{2 \hat{n}}{\hat{n}+2}<p<2\\
\left(1+\frac{2}{\hat{n}}\right)\gamma_{k-1}+\frac{4}{\hat{n}} & \text{if }  p\geq2 
\end{cases}
\qquad ,\,k\in\mathbb{N}.
\]
Notice that $\gamma_{k}>0$ for every $k\in\mathbb{N}$, since $p>\frac{2 \hat{n}}{\hat{n}+2}$.
Furthermore, this sequence diverges to $+\infty$ as $k\rightarrow\infty$
and by induction we have 
\[
\gamma_{k}=\begin{cases}
\left[2-\frac{\hat{n} (2-p)}{2}\right]\left[\left(1+\frac{2}{\hat{n}}\right)^{k}-1\right] & \text{if }\frac{2 \hat{n}}{\hat{n}+2}<p<2\\[9pt]
2\left[\left(1+\frac{2}{\hat{n}}\right)^{k}-1\right] & \text{if }  p\geq2
\end{cases}
\quad ,\,k\in\mathbb{N}_{0}.
\]
In addition, one can easily check that 
\begin{equation}
\gamma_{k}+p+\frac{2 (\gamma_{k}+2)}{\hat{n}} = \gamma_{k+1}+\mathfrak{p} \qquad \forall k\in\mathbb{N}_{0}, \forall  p>\frac{2 \hat{n}}{\hat{n}+2} .\label{eq:seq1}
\end{equation}
Now, for $k\in\mathbb{N}_{0}$ and $s\in(0,1)$ we set 
\[
r_{k}:=s r+\frac{(1-s) r}{2^{k}},\qquad \mathcal{B}_{k}:=B_{r_{k}}(x_{1})\qquad\mathrm{and}\qquad\mathcal{Q}_{k}:=Q_{r_{k}}(z_{1})=\mathcal{B}_{k}\times(t_{1}-r_{k}^{2},t_{1})
\]
and choose cut-off functions $\eta_{k}\in C_{0}^{\infty}(\mathcal{B}_{k},[0,1])$
such that $\eta_{k}\equiv1$ on $\mathcal{B}_{k+1}$ and $\vert D\eta_{k}\vert\leq\frac{2^{k+2}}{(1-s) r}$
and $\omega_{k}\in W^{1,\infty}((t_{1}-r_{k}^{2},t_{1}),[0,1])$ such
that $\omega_{k}(t_{1}-r_{k}^{2})=0$, $\omega_{k}\equiv1$ on $(t_{1}-r_{k+1}^{2},t_{1})$
and $0\leq\partial_{t} \omega_{k}\leq\frac{2^{2k+2}}{(1-s)^{2} r^{2}}$.
Note that this is possible since 
\[
r_{k}^{2}-r_{k+1}^{2}=
\frac{3 (1-s)^{2} r^{2}}{2^{2k+2}}+\frac{(1-s) s r^{2}}{2^{k}}
>
\frac{(1-s)^{2} r^{2}}{2^{2k+2}}.
\]
Using equality (\ref{eq:seq1}) and replacing $\gamma$, $\eta$ and
$\omega$ with $\gamma_{k}$, $\eta_{k}$ and $\omega_{k}$ respectively,
we obtain from \eqref{eq:Hold2} the following
\textit{recursive reverse Hölder-type inequality} 
\begin{align*}
 \fint_{Q_{r}(z_{1})} &(\omega_{k}\eta_{k})^{2+\frac{4}{\hat{n}}} (H_{\delta}(Du_{\varepsilon}))^{\gamma_{k+1}+\mathfrak{p}}\,dz\\
 &\leq
 C (\gamma_{k}+p)^{4+\frac{6}{\hat{n}}}
 \exp\left[\frac{C\Vert f\Vert_{L^{\hat{n}+2}\log^{\alpha}L(Q_{r}(z_{1}))}^{\Theta}(\gamma_{k}+p)^{\frac{2\hat{n}+3}{\alpha}}}{r^{\frac{n-\hat{n}}{\alpha}}}\right] \\
 & \quad\cdot
 \bigg[\frac{b^{\gamma_{k}+2}}{r^{n}} \vert\mathrm{spt}\eta_{k}\vert +
 r^{2}\fint_{Q_{r}(z_{1})}(H_{\delta}(Du_{\varepsilon}))^{\gamma_{k}+\mathfrak{p}}
 \big[\omega_{k}^{2}(\vert D\eta_{k}\vert^{2} + \eta_k^2) + \omega_{k}\partial_{t}\omega_{k}\eta_{k}^{2} \big]\,dz\bigg]^{1+\frac{2}{\hat{n}}},
\end{align*}
which holds for any $k\in\mathbb{N}_{0}$. Exploiting the properties
of $\eta_{k}$ and $\omega_{k}$ as well as the definitions of $r_{k}$,
$\mathcal{Q}_{k}$, $b$, $H_{\delta}$ and $\mathfrak{p}$, we obtain for every
$k\in\mathbb{N}_{0}$ that
\begin{align}\frac{b^{\gamma_{k}+2}}{r^{n}} \vert\mathrm{spt \ } \eta_{k}\vert  & \leq \frac{1}{r^{n} r_{k}^{2}}\int_{\mathcal{Q}_{k}}b^{\gamma_{k}+2}\,dz
\leq
\frac{4^{k}}{(1-s)^{2} r^{n+2}}\int_{\mathcal{Q}_{k}}(1+\delta)^{\gamma_{k}+p-p+2}\,dz\nonumber\\
 & \leq \frac{4^{k} r_{k}^{n+2} c(n,p,\delta)}{(1-s)^{2} r^{n+2}} \fint_{\mathcal{Q}_{k}}(H_{\delta}(Du_{\varepsilon}))^{\gamma_{k}+\mathfrak{p}}\,dz
\label{eq:ite0}
\end{align}
and 
\begin{align*}
 r^{2}\fint_{Q_{r}(z_{1})}(H_{\delta}(Du_{\varepsilon}))^{\gamma_{k}+\mathfrak{p}}
 \big[\omega_{k}^{2}(\vert D\eta_{k}\vert^{2} + \eta_k^2) + \omega_{k}\partial_{t}\omega_{k}\eta_{k}^{2} \big]\,dz
 \leq
 \frac{4^{k+4}r_{k}^{n+2}}{(1-s)^{2}r^{n+2}}\fint_{\mathcal{Q}_{k}}(H_{\delta}(Du_{\varepsilon}))^{\gamma_{k}+\mathfrak{p}}\,dz ,
\end{align*}
where, in the last line, we have used the fact that
\begin{equation}
    \omega_{k}^{2}(\vert D\eta_{k}\vert^{2} + \eta_k^2) 
    \leq
    \frac{2^{2k+4}}{(1-s)^{2}r^{2}} + 1
    \le
    \frac{4^{k+3}}{(1-s)^{2}r^{2}} .\label{eq:etagr2}
\end{equation}
In addition, one can easily deduce that 
\begin{equation}
\fint_{\mathcal{Q}_{k+1}}(H_{\delta}(Du_{\varepsilon}))^{\gamma_{k+1}+\mathfrak{p}}\,dz
\leq
\frac{r^{n+2}}{r_{k+1}^{n+2}} \fint_{Q_{r}(z_{1})}(\omega_{k}\eta_{k})^{2+\frac{4}{\hat{n}}} (H_{\delta}(Du_{\varepsilon}))^{\gamma_{k+1}+\mathfrak{p}}\,dz\label{eq:lefthand}
\end{equation}
for any $k\in\mathbb{N}_{0}$. Joining the last four inequalities
and using the fact that $r_{k}=2r_{k+1}-sr\leq2r_{k+1}$
and $r_{k}\leq r$ for every $k\in\mathbb{N}_{0}$, we find 
\begin{align}
    \fint_{\mathcal{Q}_{k+1}}(H_{\delta}(Du_{\varepsilon}))^{\gamma_{k+1}+\mathfrak{p}}\,dz
    \leq \, & 
    C(\gamma_{k}+p)^{4+\frac{6}{\hat{n}}}
    \exp\left[\frac{C\Vert f\Vert_{L^{\hat{n}+2}\log^{\alpha}L(Q_{r}(z_{1}))}^{\Theta} (\gamma_{k}+p)^{\frac{2 \hat{n}+3}{\alpha}}}{r^{\frac{n-\hat{n}}{\alpha}}}\right] \nonumber\\
 & \cdot\bigg[\frac{4^{k}}{(1-s)^{2}} \fint_{\mathcal{Q}_{k}}(H_{\delta}(Du_{\varepsilon}))^{\gamma_{k}+\mathfrak{p}}\,dz\bigg]^{1+\frac{2}{\hat{n}}}
\label{eq:ite1}
\end{align}
for any $k\in\mathbb{N}_{0}$, where $C\equiv C(N,n,\hat{n},\alpha,p,\delta,C_{1},K)>1$ and $\Theta\equiv\Theta(\hat{n},\alpha)>0$.
To shorten our notation, we now set 
\begin{equation}
M_{k}:=\fint_{\mathcal{Q}_{k}}(H_{\delta}(Du_{\varepsilon}))^{\gamma_{k}+\mathfrak{p}}\,dz,\qquad k\in\mathbb{N}_{0} ,\label{eq:mean_value}
\end{equation}
\begin{equation}
c_{s}
:=
\frac{C}{(1-s)^{2+\frac{4}{\hat{n}}}}
\qquad\mathrm{and}\qquad 
c_{r}
:=
C\Vert f\Vert_{L^{\hat{n}+2}\log^{\alpha}L(Q_{r}(z_{1}))}^{\Theta} r^{\frac{\hat{n}-n}{\alpha}},\label{eq:cscr}
\end{equation}
so that the last inequality turns into 
\[
M_{k+1}
\leq
c_{s}(\gamma_{k}+p)^{4+\frac{6}{\hat{n}}} 
\,4^{k(1+\frac{2}{\hat{n}})}
\exp\left(c_{r}(\gamma_{k}+p)^{\frac{2\hat{n}+3}{\alpha}}\right) 
M_{k}^{1+\frac{2}{\hat{n}}}
\]
for $k\in\mathbb{N}_{0}$. Iterating the above estimate, we obtain 
\[
M_{k}
\le
\prod_{j=0}^{k-1}
\left[c_{s}(\gamma_{j}+p)^{4+\frac{6}{\hat{n}}}\,4^{j(1+\frac{2}{\hat{n}})}
\exp\left(c_{r}(\gamma_{j}+p)^{\frac{2\hat{n}+3}{\alpha}}\right)\right]^{(1+\frac{2}{\hat{n}})^{k-1-j}}M_{0}^{(1+\frac{2}{\hat{n}})^{k}}
\]
for any $k\in\mathbb{N}$. This inequality can be rewritten as follows:
\begin{equation}
M_{k}^{\frac{1}{\gamma_{k}+\mathfrak{p}}}
\le
\prod_{j=0}^{k-1}\left[c_{s}(\gamma_{j}+p)^{4+\frac{6}{\hat{n}}}\,4^{j(1+\frac{2}{\hat{n}})}
\exp\left(c_{r}(\gamma_{j}+p)^{\frac{2\hat{n}+3}{\alpha}}\right)\right]^{\frac{(1+\frac{2}{\hat{n}})^{k-1-j}}{\gamma_{k}+\mathfrak{p}}}M_{0}^{\frac{(1+\frac{2}{\hat{n}})^{k}}{\gamma_{k}+\mathfrak{p}}}.
\label{eq:recursive_ineq}
\end{equation}
Next, we observe that 
\begin{align}
\lim_{k\rightarrow\infty}\frac{\left(1+\frac{2}{\hat{n}}\right)^{k}}{\gamma_{k}+\mathfrak{p}} & = \lim_{k\rightarrow\infty}
\begin{cases}
\frac{\left(1+\frac{2}{\hat{n}}\right)^{k}}{2+\left[2-\frac{\hat{n} (2-p)}{2}\right]\left[\left(1+\frac{2}{\hat{n}}\right)^{k}-1\right]} & \mathrm{if \ }\frac{2 \hat{n}}{\hat{n}+2}<p<2\\[10pt]
\frac{\left(1+\frac{2}{\hat{n}}\right)^{k}}{p+2\left[\left(1+\frac{2}{\hat{n}}\right)^{k}-1\right]} & \mathrm{if \ }p\geq2
\end{cases} \nonumber\\
&=\begin{cases}
\frac{2}{4-\hat{n} (2-p)} & \text{if }\frac{2 \hat{n}}{\hat{n}+2}<p<2\\[5pt]
\frac12 & \text{if } p\geq2,
\end{cases}\label{eq:ite3}
\end{align}
so that 
\begin{equation}
\lim_{k\rightarrow\infty} M_{0}^{\frac{(1+\frac{2}{\hat{n}})^{k}}{\gamma_{k}+\mathfrak{p}}}=\left[\fint_{Q_{r}(z_{1})}(H_{\delta}(Du_{\varepsilon}))^{\mathfrak{p}}\,dz\right]^{\frac{1}{\varphi}},\label{eq:ite4}
\end{equation}
where 
\begin{equation}
\varphi:=\begin{cases}
2-\frac{\hat{n} (2-p)}{2} & \mathrm{if\ }\frac{2 \hat{n}}{\hat{n}+2}<p<2\\
2 & \mathrm{if \ } p\geq2.
\end{cases}\label{eq:ite5}
\end{equation}
One can easily check that $\mathfrak{p}-\varphi\geq0$ and $\varphi\leq p$
whenever $p>\frac{2 \hat{n}}{\hat{n}+2}$. Using this information
and the fact that $c_{s}>1$, we can apply inequality (\ref{eqit1}) to obtain the following estimate:
\begin{equation}
\prod_{j=0}^{k-1}c_{s}^{\frac{\left(1+\frac{2}{\hat{n}}\right)^{k-1-j}}{\gamma_{k}+\mathfrak{p}}} 
\leq c_{s}^{\frac{\hat{n}}{2 \varphi}}.
\label{eq:product1}
\end{equation}
Similarly, by inequality (\ref{eqit2}) we get
\begin{equation}
\prod_{j=0}^{k-1}4^{j \frac{\left(1+\frac{2}{\hat{n}}\right)^{k-j}}{\gamma_{k}+\mathfrak{p}}} \leq 2^{\frac{\hat{n} (\hat{n}+2)}{2 \varphi}}.
\label{eq:product2}
\end{equation}
Now we use that $1<\gamma_{j}+p\leq p (\frac{\Hat{n}+2}{\Hat{n}})^{j}$ for any $j\in\mathbb{N}_{0}$ to
estimate the following product by means of (\ref{eqit1}) and (\ref{eqit2}):
\begin{equation}
\prod_{j=0}^{k-1}\left[(\gamma_{j}+p)^{4+\frac{6}{\hat{n}}}\right]^{\frac{\left(1+\frac{2}{\hat{n}}\right)^{k-1-j}}{\gamma_{k}+\mathfrak{p}}}
\leq
\prod_{j=0}^{k-1} \bigg[ p\bigg( \frac{\Hat{n}+2}{\Hat{n}}\bigg)^{j} \bigg] ^{4 \frac{\left(1+\frac{2}{\hat{n}}\right)^{k-j}}{\gamma_{k}+\mathfrak{p}}}
\leq
p^{\frac{2 (\hat{n}+2)}{\varphi}}\left(\frac{\hat{n}+2}{\hat{n}}\right)^{\frac{\hat{n} (\hat{n}+2)}{\varphi}}.
\label{eq:product2_bis}
\end{equation}
It remains to estimate the product involving the exponential terms with inequality (\ref{eqit1}).
Recalling that $\frac{2\hat{n}+3}{\alpha}<1$, we obtain 
\begin{align}\label{eq:product4}
&\prod_{j=0}^{k-1}\left[\exp\left(c_{r} (\gamma_{j} + p)^{\frac{2 \hat{n} + 3}{\alpha}}\right)\right]^{\frac{\left(1 + \frac{2}{\hat{n}}\right)^{k - 1 - j}}{\gamma_{k} + \mathfrak{p}}}
\leq
\prod_{j=0}^{k-1}\left[\exp\left(c_{r}\, p^{\frac{2 \hat{n}+3}{\alpha}} \Big(\frac{\hat{n} + 2}{\Hat{n}}\Big)^{j\,\frac{2 \hat{n}+3}{\alpha}}\right) \right]^{\frac{\left(1+\frac{2}{\hat{n}}\right)^{k-1-j}}{\gamma_{k}+\mathfrak{p}}} 
\nonumber\\ & \qquad \leq
\exp\left[\frac{c_{r}}{\gamma_{k}+\mathfrak{p}}  \,p^{\frac{2 \hat{n}+3}{\alpha}} \left(\frac{\hat{n}+2}{\hat{n}}\right)^{k-1} \frac{(\hat{n}+2)^{1-\frac{2 \hat{n}+3}{\alpha}}}{(\hat{n}+2)^{1-\frac{2 \hat{n}+3}{\alpha}}-(\hat{n})^{1-\frac{2 \hat{n}+3}{\alpha}}}\right].
\end{align}
Combining estimates \eqref{eq:product1}$-$\eqref{eq:product4} and
recalling the definitions of $c_{s}$ and $c_{r}$ in (\ref{eq:cscr}), we obtain 
from (\ref{eq:recursive_ineq}) that 
\begin{equation}\label{eq:ite6}
	M_{k}^{\frac{1}{\gamma_{k}+\mathfrak{p}}}
	\leq
	\frac{c}{(1-s)^{\frac{\hat{n}+2}{\varphi}}} \cdot \exp\left[c\,  \frac{\Vert f\Vert_{L^{\hat{n}+2}\log^{\alpha}L(Q_{r}(z_{1}))}^{\Theta}}{r^{\frac{n-\hat{n}}{\alpha}}} \,\cdot\,\frac{\left(1+\frac{2}{\hat{n}}\right)^{k}}{\gamma_{k}+\mathfrak{p}}\right]
M_{0}^{\frac{\left(1+\frac{2}{\hat{n}}\right)^{k}}{\gamma_{k}+\mathfrak{p}}}
\end{equation}
for any $k\in\mathbb{N}$, with $c\equiv c(N,n,\hat{n},\alpha,p,\delta,C_{1},K)>0$.
Since the constant $c$ is independent of $k\in\mathbb{N}$, recalling
(\ref{eq:mean_value}), (\ref{eq:ite3}), (\ref{eq:ite4}) and (\ref{eq:ite5})
and passing to the limit as $k\rightarrow\infty$ in both sides of
(\ref{eq:ite6}), we arrive at 
\begin{equation}
\underset{Q_{sr}(z_{1})}{\mathrm{ess}\, \sup}  \,H_{\delta}(Du_{\varepsilon})
\leq
\frac{c}{(1-s)^{\frac{\hat{n}+2}{\varphi}}}\cdot
\exp\left(\frac{c\,\Vert f\Vert_{L^{\hat{n}+2}\log^{\alpha}L(Q_{r}(z_{1}))}^{\Theta}}{r^{\frac{n-\hat{n}}{\alpha}}}\right)\left[\fint_{Q_{r}(z_{1})}(H_{\delta}(Du_{\varepsilon}))^{\mathfrak{p}}\,dz\right]^{\frac{1}{\varphi}},\label{eq:ite7}
\end{equation}
due to the fact that $r_{k}\searrow s r$ and $\gamma_{k}\nearrow\infty$
as $k\rightarrow\infty$. At this stage we need to separate the cases
$\frac{2 \hat{n}}{\hat{n}+2}<p<2$ and $p\geq2$.\\
 $\hspace*{1em}$Let us first assume that $\frac{2 \hat{n}}{\hat{n}+2}<p<2$.
In this case we have 
\[
\mathfrak{p}=2>p ,\qquad\varphi=2- \frac{\hat{n} (2-p)}{2}\qquad \mathrm{and}\qquad \frac{\varphi}{2-p} >1.
\]
From Theorem \ref{thm:regul} we know that $H_{\delta}(Du_{\varepsilon})\in L_{loc}^{\infty}(Q_{R}(z_{0}))$.
Therefore, from (\ref{eq:ite7}) we deduce
\begin{align*}
    &\underset{Q_{sr}(z_{1})}{\mathrm{ess}\,\sup} \,H_{\delta}(Du_{\varepsilon}) \\
    &\quad \leq
    \frac{c}{(1-s)^{\frac{\hat{n}+2}{\varphi}}}
    \exp\left(\frac{c\,\Vert f\Vert_{L^{\hat{n}+2}\log^{\alpha}L(Q_{r}(z_{1}))}^{\Theta}}{r^{\frac{n-\hat{n}}{\alpha}}}\right)
    \underset{Q_{r}(z_{1})}{\mathrm{ess}\,\sup}\,(H_{\delta}(Du_{\varepsilon}))^{\frac{2-p}{\varphi}}\left[\fint_{Q_{r}(z_{1})}(H_{\delta}(Du_{\varepsilon}))^{p}\,dz\right]^{\frac{1}{\varphi}} \\
    &\quad\le 
    \frac{1}{2}\, \underset{Q_{r}(z_{1})}{\mathrm{ess}\,\sup}\,H_{\delta}(Du_{\varepsilon}) \\ 
    &\qquad +
    \frac{c}{(1-s)^{\frac{2\hat{n}+4}{p(\hat{n}+2)-2\hat{n}}}}
    \exp\left(\frac{c\,\Vert f\Vert_{L^{\hat{n}+2}\log^{\alpha}L(Q_{r}(z_{1}))}^{\Theta}}{r^{\frac{n-\hat{n}}{\alpha}}}\right)\left[\fint_{Q_{r}(z_{1})}(H_{\delta}(Du_{\varepsilon}))^{p}\,dz\right]^{\frac{2}{p(\hat{n}+2)-2\hat{n}}},
\end{align*}
where, in the second to last line, we have applied Young's inequality with exponents $(\frac{\varphi}{2-p},\frac{\varphi}{\varphi-2+p})$.
Note that the preceding inequality holds for any $s\in(0,1)$. Hence, we can absorb the essential
supremum on the right-hand side using Lemma \ref{lem:Giusti} with
$\rho_{0}=sr$ and $\rho_{1}=r$. This yields 
\begin{align*}
\underset{Q_{sr}(z_{1})}{\mathrm{ess}\,\sup}\,\,H_{\delta}(Du_{\varepsilon})
\leq
\frac{c}{(1-s)^{\frac{2\hat{n}+4}{p(\hat{n}+2)-2\hat{n}}}}
\exp\left(\frac{c\Vert f\Vert_{L^{\hat{n}+2}\log^{\alpha}L(Q_{r}(z_{1}))}^{\Theta}}{r^{\frac{n-\hat{n}}{\alpha}}}\right)
\left[\fint_{Q_{r}(z_{1})}(H_{\delta}(Du_{\varepsilon}))^{p}\,dz\right]^{\frac{2}{p(\hat{n}+2)-2\hat{n}}}
\end{align*}
for a positive constant $c$ depending on $N,n,\hat{n},\alpha,p,\delta,C_{1}$
and $K$, but not on $\varepsilon$.

Finally, when $p\geq2$, inequality (\ref{eq:ite7})
reads as 
\[
\underset{Q_{sr}(z_{1})}{\mathrm{ess}\,\sup}\,H_{\delta}(Du_{\varepsilon})
\leq
\frac{c}{(1-s)^{\frac{\hat{n}+2}{2}}}\exp\left(\frac{c\,\Vert f\Vert_{L^{\hat{n}+2}\log^{\alpha}L(Q_{r}(z_{1}))}^{\Theta}}{r^{\frac{n-\hat{n}}{\alpha}}}\right)\left[\fint_{Q_{r}(z_{1})}(H_{\delta}(Du_{\varepsilon}))^{p}\,dz\right]^{\frac{1}{2}}.
\]
We have thus proved the following result, which ensures the desired
local $L^{\infty}$-bound of $Du_{\varepsilon}$ for all $p>\frac{2 n}{n+2}$.

\begin{thm}\label{thm:ite_theo1}
Let $p>\frac{2 n}{n+2}$,
$\delta>0$ and $0<\varepsilon<\min \{\frac{1}{2},(\frac{\delta}{4})^{p-1}\}$,
where $\hat{n}$ is defined according to $(\ref{eq:nhat})-(\ref{eq:betacondition})$. Moreover,
assume that 
\[
u_{\varepsilon}\in C^{0}\left([t_{0}-R^{2},t_{0}];L^{2}\left(B_{R}(x_{0}),\mathbb{R}^{N}\right)\right)\cap L^{p}\left(t_{0}-R^{2},t_{0};W^{1,p}\left(B_{R}(x_{0}),\mathbb{R}^{N}\right)\right)
\]
is the unique energy solution of problem
$(\ref{eq:CAUCHYDIR})$ with $Q'=Q_{R}(z_{0})\Subset\Omega_{T}$ and
$u$ a weak solution of $\mathrm{(\ref{eq:syst})}$. Then, for any
parabolic cylinder $Q_{r}(z_{1})\Subset Q_{R}(z_{0})$
with $r\in(0,1)$ and any $s\in(0,1)$, we have 
\begin{equation}
\underset{Q_{sr}(z_{1})}{\mathrm{ess}\,\sup} \,H_{\delta}(Du_{\varepsilon}) \leq \frac{c}{(1-s)^{\frac{\hat{n}+2}{\kappa}}} \cdot \exp\left(c\,\frac{\Vert f\Vert_{L^{\hat{n}+2}\log^{\alpha}L(Q_{r}(z_{1}))}^{\Theta}}{r^{\frac{n-\hat{n}}{\alpha}}}\right)\left[\fint_{Q_{r}(z_{1})}(H_{\delta}(Du_{\varepsilon}))^{p}\,dz\right]^{\frac{1}{\kappa}}\label{eq:ite_theo1_est}
\end{equation}
for some positive constants $c\equiv c(N,n,\hat{n},\alpha,p,\delta,C_{1},K)$
and $\Theta\equiv\Theta(\hat{n},\alpha)$, and for 
\begin{equation}
\kappa:=\begin{cases}
\frac{p (\hat{n}+2)-2 \hat{n}}{2} & \text{if }\frac{2 n}{n+2}<p<2\\
2 & \text{if } p\geq2.\end{cases}\label{eq:kappa}
\end{equation}
\end{thm}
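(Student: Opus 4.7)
The proof proceeds via a Moser-type iteration based on the reverse Hölder inequality of Proposition~\ref{prop:Reverse}. The plan is to construct a sequence of exponents $\{\gamma_k\}_{k\in\mathbb{N}_0}$ defined recursively by $\gamma_0=0$ and $\gamma_k+p+\frac{2(\gamma_k+2)}{\hat{n}}=\gamma_{k+1}+\mathfrak{p}$, together with a sequence of shrinking cylinders $\mathcal{Q}_k=Q_{r_k}(z_1)$ with $r_k=sr+\frac{(1-s)r}{2^k}$ and associated cut-off functions $\eta_k$, $\omega_k$ satisfying the standard bounds $|D\eta_k|\lesssim 2^k/((1-s)r)$ and $0\le\partial_t\omega_k\lesssim 4^k/((1-s)^2r^2)$. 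Since $p>\tfrac{2\hat n}{\hat n+2}$ by our choice of $\hat n$, the sequence $\gamma_k$ is strictly positive for $k\ge 1$ and diverges geometrically to $+\infty$, so the recursion covers the full range of integrability.

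Substituting these choices into Proposition~\ref{prop:Reverse} yields a recursive reverse Hölder-type inequality of the form
\[
M_{k+1}\le c_s (\gamma_k+p)^{4+\frac{6}{\hat n}}\,4^{k(1+\frac{2}{\hat n})}\exp\!\left(c_r(\gamma_k+p)^{\frac{2\hat n+3}{\alpha}}\right)M_k^{1+\frac{2}{\hat n}},
\]
where $M_k:=\fint_{\mathcal{Q}_k}(H_\delta(Du_\varepsilon))^{\gamma_k+\mathfrak p}\,dz$ and $c_s$, $c_r$ are the constants defined in the iteration scheme. Iterating this estimate yields an explicit product formula for $M_k^{1/(\gamma_k+\mathfrak p)}$. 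The four products that appear—the ones involving $c_s$, the $4^{j(1+2/\hat n)}$ factor, the polynomial factor $(\gamma_j+p)^{4+6/\hat n}$, and the exponential factor $\exp(c_r(\gamma_j+p)^{(2\hat n+3)/\alpha})$—can all be controlled by the elementary summation bounds in Lemma~\ref{lem:lemit}. Here the key quantitative feature is that $\frac{2\hat n+3}{\alpha}<1$ by assumption on $\alpha$, which guarantees that the series defining the exponential factor converges.

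Passing to the limit $k\to\infty$ using $r_k\searrow sr$ and $\gamma_k\nearrow\infty$, and combining with the asymptotic identity $\lim_k\frac{(1+2/\hat n)^k}{\gamma_k+\mathfrak p}=\frac{1}{\varphi}$ with $\varphi=2-\frac{\hat n(2-p)}{2}$ when $p<2$ and $\varphi=2$ when $p\ge 2$, we obtain
\[
\underset{Q_{sr}(z_1)}{\mathrm{ess}\,\sup}\,H_\delta(Du_\varepsilon)\le \frac{c}{(1-s)^{(\hat n+2)/\varphi}}\exp\!\left(\frac{c\,\|f\|^{\Theta}_{L^{\hat n+2}\log^\alpha L(Q_r(z_1))}}{r^{(n-\hat n)/\alpha}}\right)\!\left[\fint_{Q_r(z_1)}(H_\delta(Du_\varepsilon))^{\mathfrak p}\,dz\right]^{1/\varphi}.
\]
In the superquadratic case $p\ge 2$ we have $\mathfrak p=p$ and $\varphi=\kappa=2$, so this is already the desired estimate.

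The main remaining obstacle is the case $\tfrac{2n}{n+2}<p<2$, where $\mathfrak p=2>p$ but $\varphi<2$, so the right-hand side is not yet in the required form. Here one splits $(H_\delta(Du_\varepsilon))^2=(H_\delta(Du_\varepsilon))^{2-p}\cdot (H_\delta(Du_\varepsilon))^p$, uses the a priori local boundedness of $Du_\varepsilon$ from Theorem~\ref{thm:regul} to factor out an essential supremum, and applies Young's inequality with conjugate exponents $(\tfrac{\varphi}{2-p},\tfrac{\varphi}{\varphi-2+p})$—both well-defined since $\varphi>2-p$ exactly when $p>\tfrac{2\hat n}{\hat n+2}$. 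This produces an inequality of the form $\Psi(sr)\le\tfrac12\Psi(r)+C(1-s)^{-\sigma}\cdots$ for the function $\Psi(\rho):=\mathrm{ess\,sup}_{Q_\rho(z_1)}H_\delta(Du_\varepsilon)$, and the standard iteration Lemma~\ref{lem:Giusti} then absorbs the supremum term, yielding the stated bound with $\kappa=\tfrac{p(\hat n+2)-2\hat n}{2}$. Crucially, all constants arising in the iteration depend only on $N,n,\hat n,\alpha,p,\delta,C_1,K$ and are independent of $\varepsilon$, which is essential for the subsequent passage to the limit in Section~\ref{sec:Final}.
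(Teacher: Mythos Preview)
Your proposal is correct and follows essentially the same approach as the paper's own proof: the same recursive sequence $\gamma_k$ and shrinking cylinders $\mathcal{Q}_k$, the same iteration of Proposition~\ref{prop:Reverse} controlled via Lemma~\ref{lem:lemit}, and in the subquadratic case the same splitting followed by Young's inequality and absorption via Lemma~\ref{lem:Giusti}. The key structural observations you highlight---that $p>\tfrac{2\hat n}{\hat n+2}$ ensures $\gamma_k>0$ and $\varphi>2-p$, and that $\tfrac{2\hat n+3}{\alpha}<1$ makes the exponential series converge---are exactly the ones the paper relies on.
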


\subsection{Step 4: the iteration for \texorpdfstring{$\boldsymbol{1<p\protect\leq\frac{2n}{n+2}}$}{1<p<=2n/(n+2)} }\label{subsec:iteration2}

We now start the iteration procedure for
$1<p\leq \frac{2n}{n+2}$, keeping both the assumptions
and the notations used in Propositions \ref{prop:Caccioppoli}
and \ref{prop:Reverse}. Note that, by the choice of $\beta$ in (\ref{eq:betacondition}), the condition $1<p\leq \frac{2n}{n+2}$ implies that $n\geq3$, and therefore $\Hat{n}=n$. We will however restrict ourselves to the case $f=0$. We again define by induction a sequence
$\{\gamma_{k}\}_{k\in\mathbb{N}_{0}}$ by letting $\gamma_{0}=0$
and 
\[
\gamma_{k}:=\left(1+\frac{2}{n}\right)\gamma_{k-1}+ \frac{2 p}{n} ,\qquad k\in\mathbb{N}.
\]
This sequence diverges to $+\infty$ as $k\rightarrow\infty$ and,
by induction, we have 
\[
\gamma_{k}=p\left[\left(1+\frac{2}{n}\right)^{k}-1\right],\qquad k\in\mathbb{N}_{0}.
\]
Moreover, one can easily check that 
\begin{equation}
\gamma_{k}+p+\frac{2 (\gamma_{k}+2)}{n} > \gamma_{k+1}+p \qquad
\forall  k\in\mathbb{N}_{0}, \forall  p\in\left(1, \frac{2n}{n+2}\right].\label{eq:seq1-1}
\end{equation}
Now, for $k\in\mathbb{N}_{0}$ and $s\in(0,1)$ we define $r_{k}$,
$\mathcal{B}_{k}$, $\mathcal{Q}_{k}$ and the cut-off functions $\eta_{k}$
and $\omega_{k}$ as in Section \ref{subsec:iteration1}. Using inequality
(\ref{eq:seq1-1}) and replacing $\gamma$, $\eta$ and $\omega$
with $\gamma_{k}$, $\eta_{k}$ and $\omega_{k}$ respectively, we obtain from
\eqref{eq:Hold2} the following
\textit{recursive reverse Hölder-type inequality} 
\begin{align*}
&\fint_{Q_{r}(z_{1})} 
(\omega_{k}\eta_{k})^{2+\frac{4}{n}}(H_{\delta}(Du_{\varepsilon}))^{\gamma_{k+1}+p}\,dz \nonumber\\
&\ \ \leq
C (\gamma_{k}+p)^{4+\frac{6}{n}}
\bigg[\frac{b^{\gamma_{k}+2}}{r^{n}}\vert\mathrm{spt}\eta_{k}\vert
\nonumber\\
&\ \ \quad +
r^{2}\fint_{Q_{r}(z_{1})}(H_{\delta}(Du_{\varepsilon}))^{\gamma_{k}+p}
\big[(\vert D\eta_{k}\vert^{2}+\eta_{k}^{2})\omega_{k}^{2} + \omega_{k}\partial_{t}\omega_{k}\eta_{k}^{2} +
\Vert u_{\varepsilon}\Vert_{L^{\infty}(Q_{r}(z_{1}))}^{2}(\partial_{t}\omega_{k})^{2}\eta_{k}^{2}\big]\,dz\bigg]^{1+\frac{2}{n}},
\end{align*}
which holds for any $k\in\mathbb{N}_{0}$. Exploiting the properties
of $\eta_{k}$ and $\omega_{k}$ as well as the definition of $\mathcal{Q}_{k}$,
for every $k\in\mathbb{N}_{0}$ we get 
\begin{align*}
& r^{2}\fint_{Q_{r}(z_{1})}(H_{\delta}(Du_{\varepsilon}))^{\gamma_{k}+p}
\big[(\vert D\eta_{k}\vert^{2}+\eta_{k}^{2})\omega_{k}^{2} + \omega_{k}\partial_{t}\omega_{k}\eta_{k}^{2} +
\Vert u_{\varepsilon}\Vert_{L^{\infty}(Q_{r}(z_{1}))}^{2}(\partial_{t}\omega_{k})^{2}\eta_{k}^{2}\big]\,dz \nonumber\\
 & \qquad \leq
 \frac{4^{2k+3}r_{k}^{n+2}}{(1-s)^{2}r^{n+2}}
 \left[1+ \frac{\Vert u_{\varepsilon}\Vert_{L^{\infty}(Q_{r}(z_{1}))}^{2}}{(1-s)^{2}r^{2}}\right]\fint_{\mathcal{Q}_{k}}(H_{\delta}(Du_{\varepsilon}))^{\gamma_{k}+p}\,dz,
\end{align*}
where, in the last line, we have used (\ref{eq:etagr2}). Combining the two previous inequalities with 
\eqref{eq:ite0} and (\ref{eq:lefthand}), and using the fact that
$r_{k}\leq 2 r_{k+1}$ and $r_{k}\leq r$ for every $k\in\mathbb{N}_{0}$,
we find 
\begin{align*}
\fint_{\mathcal{Q}_{k+1}}(H_{\delta}(Du_{\varepsilon}))^{\gamma_{k+1}+p}\,dz  & 
\leq 
C (\gamma_{k}+p)^{4+\frac{6}{n}}\\
& \qquad \cdot
\left[\frac{16^{k}}{(1-s)^{2}}\left(1+\frac{\Vert u_{\varepsilon}\Vert_{L^{\infty}(Q_{r}(z_{1}))}^{2}}{(1-s)^{2}r^{2}}\right)\fint_{\mathcal{Q}_{k}}(H_{\delta}(Du_{\varepsilon}))^{\gamma_{k}+p}\,dz\right]^{1+\frac{2}{n}}
\end{align*}
for any $k\in\mathbb{N}_{0}$, where $C\equiv C(N,n,\alpha,p,\delta,C_{1},K)>1$ and $\Theta\equiv\Theta(n,\alpha)>0$.
To shorten our notation, we now set 
\begin{equation}
c_{\varepsilon} := C\left[\frac{1}{(1-s)^{2}} \left(1+ \frac{\Vert u_{\varepsilon}\Vert_{L^{\infty}(Q_{r}(z_{1}))}^{2}}{(1-s)^{2} r^{2}}\right)\right]^{1+\frac{2}{n}}.
\label{eq:c_eps}
\end{equation}
Thus, recalling the definition (\ref{eq:mean_value}), the preceding
inequality turns into 
\[
M_{k+1} \leq c_{\varepsilon}(\gamma_{k}+p)^{4+\frac{6}{n}} 16^{k (1+\frac{2}{n})} M_{k}^{1+\frac{2}{n}}
\]
for $k\geq0$. Note that the constant $c_{\varepsilon}$ depends on
$\varepsilon$ and $r$ through the quotient 
\[
\frac{\Vert u_{\varepsilon}\Vert_{L^{\infty}(Q_{r}(z_{1}))}^{2}}{r^{2}} .
\]
Iterating the above estimate, we obtain 
\[
M_{k} \le \prod_{j=0}^{k-1}\left[c_{\varepsilon}(\gamma_{j}+p)^{4+\frac{6}{n}} 16^{j (1+\frac{2}{n})}\right]^{(1+\frac{2}{n})^{k-1-j}}M_{0}^{(1+\frac{2}{n})^{k}}
\]
for any $k\in\mathbb{N}$. This inequality can be rewritten as follows:
\begin{equation}
M_{k}^{\frac{1}{\gamma_{k}+p}} \le \prod_{j=0}^{k-1}\left[c_{\varepsilon} (\gamma_{j}+p)^{4+\frac{6}{n}} 16^{j (1+\frac{2}{n})}\right]^{\frac{\left(1+\frac{2}{n}\right)^{k-1-j}}{\gamma_{k}+p}}M_{0}^{\frac{\left(1+\frac{2}{n}\right)^{k}}{\gamma_{k}+p}}.\label{eq:recursive_ineq-1}
\end{equation}
Next, we observe that 
\begin{equation}
\lim_{k\rightarrow\infty}\frac{\left(1+\frac{2}{n}\right)^{k}}{\gamma_{k}+p} = \lim_{k\rightarrow\infty} \frac{\left(1+\frac{2}{n}\right)^{k}}{p+p\left[\left(1+\frac{2}{n}\right)^{k}-1\right]} = \frac{1}{p} ,\label{eq:iitt3}
\end{equation}
so that 
\begin{equation}
\lim_{k\rightarrow\infty} M_{0}^{\frac{\left(1+\frac{2}{n}\right)^{k}}{\gamma_{k}+p}}=\left[\fint_{Q_{r}(z_{1})}(H_{\delta}(Du_{\varepsilon}))^{p}\,dz\right]^{\frac{1}{p}}.\label{eq:iitt4}
\end{equation}
Since $c_{\varepsilon}>1$, we can apply inequality (\ref{eqit1}) to obtain the following estimate: 
\begin{equation}
\prod_{j=0}^{k-1}c_{\varepsilon}^{\frac{\left(1+\frac{2}{n}\right)^{k-1-j}}{\gamma_{k}+p}} 
\leq
c_{\varepsilon}^{\frac{n}{2 p}}.
\label{eq:prod1bis}
\end{equation}
Similarly, by inequality (\ref{eqit2}) we get
\begin{equation}
\prod_{j=0}^{k-1}16^{j \frac{\left(1+\frac{2}{n}\right)^{k-j}}{\gamma_{k}+p}} 
\leq
2^{\frac{n (n+2)}{p}}.
\label{eq:prod2bis}
\end{equation}
Now we use that $1<\gamma_{j}+p= p (\frac{n+2}{n})^{j}$ for any $j\in\mathbb{N}_{0}$ to
estimate the following product by means of (\ref{eqit1}) and (\ref{eqit2}):
\begin{align}\label{eq:prod3bis}
& \prod_{j=0}^{k-1}\left[(\gamma_{j}+p)^{4+\frac{6}{n}}\right]^{\frac{\left(1+\frac{2}{n}\right)^{k-1-j}}{\gamma_{k}+p}}
\leq
\prod_{j=0}^{k-1} \bigg[ p \bigg(\frac{n+2}{n}\bigg)^{j} \bigg] ^{4 \frac{\left(1+\frac{2}{n}\right)^{k-j}}{\gamma_{k}+p}}
\leq
p^{\frac{2 (n+2)}{p}}\left(\frac{n+2}{n}\right)^{\frac{n (n+2)}{p}}.
\end{align}
Combining estimates \eqref{eq:prod1bis}$-$\eqref{eq:prod3bis} and
recalling the definition of $c_{\varepsilon}$ in (\ref{eq:c_eps}), from (\ref{eq:recursive_ineq-1}) we obtain that 
\begin{equation}
M_{k}^{\frac{1}{\gamma_{k}+p}}  \leq \frac{c}{(1-s)^{\frac{n+2}{p}}} \left[1+ \frac{\Vert u_{\varepsilon}\Vert_{L^{\infty}(Q_{r}(z_{1}))}^{2}}{(1-s)^{2} r^{2}}\right]^{\frac{n+2}{2 p}} M_{0}^{\frac{\left(1+\frac{2}{n}\right)^{k}}{\gamma_{k}+p}}
\label{eq:iitt2}
\end{equation}
for any $k\in\mathbb{N}$, with $c\equiv c(N,n,\alpha,p,\delta,C_{1},K)>0$.
Since the constant $c$ is independent of $k\in\mathbb{N}$, recalling
(\ref{eq:mean_value}), (\ref{eq:iitt3}) and (\ref{eq:iitt4}) and
passing to the limit as $k\rightarrow\infty$ in both sides of \eqref{eq:iitt2},
we arrive at
\begin{equation}\label{eq:iitt_5}
\underset{Q_{sr}(z_{1})}{\mathrm{ess}\,\sup} \,H_{\delta}(Du_{\varepsilon})
\leq
\frac{c}{(1-s)^{\frac{n+2}{p}}} \left[1+ \frac{\Vert u_{\varepsilon}\Vert_{L^{\infty}(Q_{r}(z_{1}))}^{2}}{(1-s)^{2} r^{2}}\right]^{\frac{n+2}{2 p}}
\left[\fint_{Q_{r}(z_{1})}(H_{\delta}(Du_{\varepsilon}))^{p}\,dz\right]^{\frac{1}{p}},
\end{equation}
due to the fact that $r_{k}\searrow s r$ and $\gamma_{k}\nearrow\infty$
as $k\rightarrow\infty$. Notice that the right-hand
side of \eqref{eq:iitt_5} depends on $\Vert u_{\varepsilon}\Vert_{L^{\infty}(Q_{r}(z_{1}))}$.
However, since $f=0$, we can use the result from Section
\ref{sec:comparison}, thus obtaining
\[
\underset{Q_{sr}(z_{1})}{\mathrm{ess}\,\sup} \,H_{\delta}(Du_{\varepsilon}) \leq c\,  \frac{\left(1+\Vert u\Vert_{L^{\infty}(Q_{r}(z_{1}))}^{2}\right)^{\frac{n+2}{2 p}}}{[(1-s)^{2} r]^{\frac{n+2}{p}}} \left[\fint_{Q_{r}(z_{1})}(H_{\delta}(Du_{\varepsilon}))^{p}\,dz\right]^{\frac{1}{p}}.
\]
In conclusion, we have so far proved the following theorem, which
ensures the desired local $L^{\infty}$-bound of $Du_{\varepsilon}$
when $f=0$ and $p\in\left(1,\frac{2 n}{n + 2}\right]$:

\begin{thm}\label{thm:ite_theo2} 
Let $f=0$, $1<p\leq\frac{2 n}{n+2}$,
$\delta>0$ and $0<\varepsilon<\min \{\frac{1}{2},(\frac{\delta}{4})^{p-1}\}$. Moreover,
assume that 
\[
u_{\varepsilon}\in C^{0}\left([t_{0}-R^{2},t_{0}];L^{2}(B_{R}(x_{0}),\mathbb{R}^{N})\right)\cap L^{p}\left(t_{0}-R^{2},t_{0};W^{1,p}(B_{R}(x_{0}),\mathbb{R}^{N})\right)
\]
is the unique energy solution of problem
$(\ref{eq:CAUCHYDIR})$ with $Q'=Q_{R}(z_{0})\Subset\Omega_{T}$ and
$u$ a weak solution of $\mathrm{(\ref{eq:syst})}$,
satisfying the additional assumption of Remark
\ref{rmk2}. Then, for any parabolic cylinder
$Q_{r}(z_{1})\Subset Q_{R}(z_{0})$ with $r\in(0,1)$ and any $s\in(0,1)$,
we have 
\begin{equation}
\underset{Q_{sr}(z_{1})}{\mathrm{ess}\,\sup} \,H_{\delta}(Du_{\varepsilon}) \leq c\,  \frac{\left(1+\Vert u\Vert_{L^{\infty}(Q_{R}(z_{0}))}^{2}\right)^{\frac{n+2}{2 p}}}{[(1-s)^{2} \,r]^{\frac{n+2}{p}}} \left[\fint_{Q_{r}(z_{1})}(H_{\delta}(Du_{\varepsilon}))^{p}\,dz\right]^{\frac{1}{p}}\label{eq:ite_theo2_est}
\end{equation}
for some positive constant $c\equiv c(N,n,\alpha,p,\delta,C_{1},K)$.
\end{thm}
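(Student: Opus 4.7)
The plan is to run a Moser--type iteration starting from the reverse Hölder inequality of Proposition \ref{prop:Reverse}, specialized to the subcritical regime $1<p\le \frac{2n}{n+2}$ with $f\equiv 0$ (so $\mathfrak{p}=p$ and $\hat n=n$), and then to close the argument by invoking the maximum principle of Section \ref{sec:comparison} to replace $\Vert u_\varepsilon\Vert_\infty$ by $\Vert u\Vert_\infty$. First I would define the exponent sequence by $\gamma_0=0$ and $\gamma_k=(1+2/n)\gamma_{k-1}+2p/n$, so that $\gamma_k=p[(1+2/n)^k-1]$, and observe that it satisfies
\begin{equation*}
    \gamma_k + p + \tfrac{2(\gamma_k+2)}{n} \geq \gamma_{k+1}+p,
\end{equation*}
which is what is needed to iterate \eqref{eq:Hold2}. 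In parallel, I would introduce shrinking cylinders $\mathcal{Q}_k=Q_{r_k}(z_1)$ with $r_k=sr+(1-s)r/2^k$, together with the standard cut-offs $\eta_k\in C_0^\infty(\mathcal{B}_k,[0,1])$ and $\omega_k\in W^{1,\infty}([0,1])$ having the scaling $|D\eta_k|\lesssim 2^k/((1-s)r)$ and $\partial_t\omega_k\lesssim 4^k/((1-s)^2 r^2)$.

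Next I plug these into Proposition \ref{prop:Reverse} (keeping the indicator-function term, which is now active). Using the elementary bounds \eqref{eq:ite0}, \eqref{eq:etagr2} and \eqref{eq:lefthand}, together with $r_k\le 2 r_{k+1}\le 2r$, I expect to obtain the recursion
\begin{equation*}
    M_{k+1} \leq c_\varepsilon\,(\gamma_k+p)^{4+\frac{6}{n}}\,16^{k(1+\frac{2}{n})}\,M_k^{1+\frac{2}{n}},
\end{equation*}
where $M_k:=\fint_{\mathcal{Q}_k}(H_\delta(Du_\varepsilon))^{\gamma_k+p}\,dz$ and
\begin{equation*}
    c_\varepsilon = C\Bigl[\tfrac{1}{(1-s)^2}\Bigl(1+\tfrac{\Vert u_\varepsilon\Vert_{L^\infty(Q_r(z_1))}^2}{(1-s)^2 r^2}\Bigr)\Bigr]^{1+\frac{2}{n}}.
\end{equation*}
The crucial point is that all $\varepsilon$-dependence is concentrated in the $L^\infty$-norm of $u_\varepsilon$, which we will control uniformly in $\varepsilon$ in the last step.

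Iterating this relation and raising to the $1/(\gamma_k+p)$-th power produces the three products that need to be controlled as $k\to\infty$. Here the key ingredient is Lemma \ref{lem:lemit}: since $\gamma_j+p=p(1+2/n)^j$ grows geometrically with ratio $\kappa=1+2/n>1$ and lower constant $C=p$, inequalities \eqref{eqit1}--\eqref{eqit2} give uniform bounds for the product involving $c_\varepsilon$ (by $c_\varepsilon^{n/(2p)}$), the geometric product $\prod 16^{j\cdots}$, and the polynomial factors $(\gamma_j+p)^{4+6/n}$. Combined with the limit $\lim_k (1+2/n)^k/(\gamma_k+p)=1/p$, passing $k\to\infty$ yields the intermediate estimate
\begin{equation*}
    \underset{Q_{sr}(z_1)}{\mathrm{ess}\,\sup}H_\delta(Du_\varepsilon)
    \leq
    \frac{c}{(1-s)^{\frac{n+2}{p}}}\Bigl[1+\tfrac{\Vert u_\varepsilon\Vert_{L^\infty(Q_r(z_1))}^2}{(1-s)^2 r^2}\Bigr]^{\frac{n+2}{2p}}\Bigl[\fint_{Q_r(z_1)}(H_\delta(Du_\varepsilon))^p\,dz\Bigr]^{\frac{1}{p}}.
\end{equation*}

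The final, decisive step is to eliminate the dependence on $\Vert u_\varepsilon\Vert_\infty$, which a priori could blow up as $\varepsilon\downarrow 0$. Because $f\equiv 0$ forces $f_\varepsilon\equiv 0$, Section \ref{sec:comparison} provides the uniform bound $\Vert u_\varepsilon\Vert_{L^\infty(Q_R(z_0))}\leq \sqrt{N}\,\Vert u\Vert_{L^\infty(Q_R(z_0))}$. Inserting this and absorbing the $(1-s)^2 r^2$ factor into the front gives exactly \eqref{eq:ite_theo2_est}. I expect the main obstacle to be the careful bookkeeping in the product estimates: tracking how the three exponents $(1-s)^{-2}$, $r^{-2}$, and $\Vert u_\varepsilon\Vert_\infty^2$ inside $c_\varepsilon$ recombine after being raised to the power $n/(2p)$ to produce the factor $[(1-s)^2 r]^{-(n+2)/p}\bigl(1+\Vert u\Vert_\infty^2\bigr)^{(n+2)/(2p)}$ on the right-hand side, and verifying that the choice of $\gamma_k$ makes Lemma \ref{lem:lemit} applicable with a clean geometric ratio.
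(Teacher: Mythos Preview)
Your proposal is correct and follows essentially the same route as the paper: the same exponent sequence $\gamma_k=p[(1+2/n)^k-1]$, the same shrinking cylinders and cut-offs, the same recursion $M_{k+1}\le c_\varepsilon(\gamma_k+p)^{4+6/n}16^{k(1+2/n)}M_k^{1+2/n}$ obtained from Proposition~\ref{prop:Reverse} with $f=0$, and the same closure via Lemma~\ref{lem:lemit} and the maximum principle of Section~\ref{sec:comparison}. The only cosmetic difference is that the paper records the key exponent relation as a strict inequality rather than your $\ge$, which is immaterial since $H_\delta(Du_\varepsilon)\ge 1$.
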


\section{Proof of Theorem \ref{thm:theo1}}\label{sec:Final}

We are now in a position to prove Theorem
\ref{thm:theo1}. In fact, using the strong $L^{p}$-convergence of
$G_{\delta}(Du_{\varepsilon})$ to $G_{\delta}(Du)$ established in
Lemma \ref{lem:Lpconv}{} as well as the results from
Theorems \ref{thm:ite_theo1} and \ref{thm:ite_theo2}, we will now
show that estimates (\ref{eq:ite_theo1_est}) and (\ref{eq:ite_theo2_est})
also hold true for the weak solution $u$ of (\ref{eq:syst}) in
place of $u_{\varepsilon}$. From this the desired local $L^{\infty}$-bounds
of $Du$ will immediately follow. Therefore, in the
next proof we will use the same assumptions and notations of Theorems
\ref{thm:ite_theo1} and \ref{thm:ite_theo2}.

\begin{proof}[\bfseries{Proof of Theorem~\ref{thm:theo1}}]
By virtue of Lemma \ref{lem:Lpconv}, we have that $G_{\delta}(Du_{\varepsilon})\rightarrow G_{\delta}(Du)$
strongly in $L^{p}(Q_{R}(z_{0}),\mathbb{R}^{Nn})$ as $\varepsilon\searrow0$.
Thus we conclude that there exists a sequence $\{\varepsilon_{j}\}_{j\in\mathbb{N}}$
such that:\vspace{0.1cm}

\noindent $\hspace*{1em}\bullet$ $  0<\varepsilon_{j}<\min \{\frac{1}{2},(\frac{\delta}{4})^{p-1}\}$
for every $j\in\mathbb{N}$ and $\varepsilon_{j}\searrow0$ as $j\rightarrow+\infty$
;\vspace{0.2cm}

\noindent $\hspace*{1em}\bullet$ $  \vert G_{\delta}(Du_{\varepsilon_{j}}(x,t))\vert\rightarrow\vert G_{\delta}(Du(x,t))\vert$
almost everywhere in $Q_{R}(z_{0})\Subset\Omega_{T}$ as $j\rightarrow+\infty$.\\

If $p>\frac{2 n}{n + 2}$,
then, using the definition of $G_{\delta}$ and Theorem \ref{thm:ite_theo1}, we have 
for almost every $\tilde{z}\in Q_{sr}(z_{1})$ that
\begin{align*}
\vert & Du(\tilde{z})\vert 
\leq
(1+\delta+\vert G_{\delta}(Du(\tilde{z}))\vert)
\leq
\lim_{j\rightarrow\infty} H_{\delta}(Du_{\varepsilon_{j}}(\tilde{z})) 
\leq \limsup_{j\rightarrow\infty}\,\,\,\underset{Q_{sr}(z_{1})}{\mathrm{ess \,sup}} \,H_{\delta}(Du_{\varepsilon_{j}})\\
& \leq
\limsup_{j\rightarrow\infty}  \frac{c}{(1-s)^{\frac{\hat{n}+2}{\kappa}}} \cdot\exp\left(c  \frac{\Vert f\Vert_{L^{\hat{n}+2}\log^{\alpha}L(Q_{r}(z_{1}))}^{\Theta}}{r^{\frac{n-\hat{n}}{\alpha}}}\right)\left[\fint_{Q_{r}(z_{1})}(1+\delta+\vert G_{\delta}(Du_{\varepsilon_{j}})\vert)^{p}\,dz\right]^{\frac{1}{\kappa}}\\
& =
\frac{c}{(1-s)^{\frac{\hat{n}+2}{\kappa}}} \cdot\exp\left(c  \frac{\Vert f\Vert_{L^{\hat{n}+2}\log^{\alpha}L(Q_{r}(z_{1}))}^{\Theta}}{r^{\frac{n-\hat{n}}{\alpha}}}\right)\left[\fint_{Q_{r}(z_{1})}(1+\delta+\vert G_{\delta}(Du)\vert)^{p}\,dz\right]^{\frac{1}{\kappa}}\\
& =
\frac{c}{(1-s)^{\frac{\hat{n}+2}{\kappa}}} \cdot\exp\left(c  \frac{\Vert f\Vert_{L^{\hat{n}+2}\log^{\alpha}L(Q_{r}(z_{1}))}^{\Theta}}{r^{\frac{n-\hat{n}}{\alpha}}}\right)\left[\fint_{Q_{r}(z_{1})}(\max \{\vert Du\vert,1+\delta\})^{p}\,dz\right]^{\frac{1}{\kappa}},
\end{align*}
where $\kappa$ is defined by (\ref{eq:kappa}), while $c$ is a positive
constant depending only on $N$, $n$, $\hat{n}$, $\alpha$, $p$,
$\delta$, $C_{1}$ and $K$. We now choose $\delta=1$. Since the above inequality holds for
almost every $\tilde{z}\in Q_{sr}(z_{1})$, we immediately obtain
\begin{align*}
\underset{Q_{sr}(z_{1})}{\mathrm{ess}\,\sup} \,\vert Du\vert
&\leq
\underset{Q_{sr}(z_{1})}{\mathrm{ess}\,\sup} \,H_{\delta}(Du)\\
&\leq
\frac{c}{(1-s)^{\frac{\hat{n}+2}{\kappa}}} \cdot\exp\left(c  \frac{\Vert f\Vert_{L^{\hat{n}+2}\log^{\alpha}L(Q_{r}(z_{1}))}^{\Theta}}{r^{\frac{n-\hat{n}}{\alpha}}}\right)\left[\fint_{Q_{r}(z_{1})}(\vert Du\vert + 1)^{p}\,dz\right]^{\frac{1}{\kappa}}< +\infty.
\end{align*}

Finally, let us consider the case
$f=0$ and $1<p\leq \frac{2 n}{n + 2}$. Arguing as above, but this time using Theorem \ref{thm:ite_theo2}
instead of Theorem \ref{thm:ite_theo1}, we get
\begin{align*}
\underset{Q_{sr}(z_{1})}{\mathrm{ess}\,\sup} \,\vert Du\vert
&\leq
\underset{Q_{sr}(z_{1})}{\mathrm{ess}\,\sup} \,H_{\delta}(Du)\\
&\leq
c  \frac{\left(1+\Vert u\Vert_{L^{\infty}(Q_{R}(z_{0}))}^{2}\right)^{\frac{n+2}{2 p}}}{[(1-s)^{2} r]^{\frac{n + 2}{p}}}
\left[\fint_{Q_{r}(z_{1})}(\vert Du\vert+1)^{p}\,dz\right]^{\frac{1}{p}}< +\infty.
\end{align*}
This concludes the proof.\end{proof}

\lyxaddress{\noindent \textbf{$\quad$}\\
 $\hspace*{1em}$\textbf{Pasquale Ambrosio}\\
 Dipartimento di Matematica e Applicazioni ``R. Caccioppoli''\\
 Università degli Studi di Napoli ``Federico II''\\
 Via Cintia, 80126 Napoli, Italy.\\
 \textit{E-mail address}: pasquale.ambrosio2@unina.it}

\lyxaddress{$\hspace*{1em}$\textbf{Fabian Bäuerlein}\\
 Fachbereich Mathematik, Universität Salzburg \\
 Hellbrunner Str. 34, 5020 Salzburg, Austria.\\
 \textit{E-mail address}: fabian.baeuerlein@plus.ac.at}


\begin{thebibliography}{99}

\bibitem{Ada} R. A. Adams, \textit{Sobolev Spaces}, Pure Appl. Math.
65, Academic Press, New York, 1975.

\bibitem{Ambrosio+2023} P. Ambrosio, \textit{Fractional Sobolev regularity for solutions to a strongly degenerate parabolic equation}, Forum Mathematicum (2023).

\bibitem{APwidelydeg} P. Ambrosio, A. Passarelli di Napoli,  \textit{Regularity results for a class of widely degenerate parabolic equations}, Adv. Calc. Var. (2023).

\bibitem{brasco2011global} L. Brasco, \textit{Global $L^\infty$ gradient estimates for solutions to a certain degenerate elliptic equation}, Nonlinear Analysis: Theory, Methods \& Applications \textbf{74} (2) (2011) 516-531.

\bibitem{Boegelein}
V. Bögelein,
\textit{Global gradient bounds for the parabolic p-Laplacian system}, Proc. Lond. Math. Soc. (3) \textbf{111} (2015), no. 3, 633--680.

\bibitem{BDGP} V. Bögelein, F. Duzaar, R. Giova, A. Passarelli di
Napoli, \textit{Higher regularity in congested traffic dynamics},
Math. Ann. (2022) 1-56.

\bibitem{bogelein2022boundary} V. Bögelein, F. Duzaar, N. Liao, C. Scheven, \textit{Boundary regularity for parabolic systems in convex domains}, Journal of the London Mathematical Society \textbf{105} (3) (2022) 1702-1751.

\bibitem{ChiEva} M. Chipot, L.C. Evans,
\textit{Linearization at infinity and Lipschitz estimates for certain problems in the calculus of variations}, Proc. R. Soc. Edinb. A \textbf{102} (1986) 291-303.

\bibitem{CGHP} A. Clop, R. Giova, F. Hatami, A. Passarelli di Napoli,
\textit{Very degenerate elliptic equations under almost critical Sobolev
regularity}, Forum Math. \textbf{32} (6) (2020) 1515-1537.

\bibitem{ColoFig} M. Colombo, A. Figalli, \textit{An excess-decay result for a class of degenerate elliptic equations}, Discret.
Contin. Dyn. Syst. Ser. S \textbf{7}(4), 631-652 (2014).

\bibitem{COLOMBO201494} M. Colombo, A. Figalli, \textit{Regularity results for very degenerate elliptic equations}, Journal de Mathématiques Pures et Appliquées \textbf{101} (1) (2014) 94-117.

\bibitem{CGGHigherasym} G. Cupini, F. Giannetti, R. Giova, A. Passarelli di Napoli, \textit{Higher integrability for minimizers of asymptotically convex integrals with discontinuous coefficients}, Nonlinear Anal. \textbf{154}, 7-24 (2017).

\bibitem{Cupini2} G. Cupini, F. Giannetti, R. Giova, A. Passarelli di Napoli, \textit{Regularity results for vectorial minimizers of a class of degenerate convex integrals}, J. Differ. Equ. \textbf{265}(9), 4375-4416 (2018).

\bibitem{Cupini3} G. Cupini, M. Guidorzi, E. Mascolo, \textit{Regularity of minimizers of vectorial integrals with $p-q$ growth}, Nonlinear Anal. \textbf{54}(4), 591-616 (2003).

\bibitem{DiBene} E. DiBenedetto,\textit{ Degenerate parabolic equations},
Universitext, Springer-Verlag, New York, NY. xv, 387 (1993).

\bibitem{Eleuteri2016} M. Eleuteri, P. Marcellini, E. Mascolo, \textit{Lipschitz estimates for systems with ellipticity conditions at infinity}, Ann. Mat. Pura Appl. \textbf{195} (5) (2016), 1575-1603.

\bibitem{Fonseca} I. Fonseca, N. Fusco, P. Marcellini, \textit{An existence result for a nonconvex variational problem via
regularity}, ESAIM Control Optim. Calc. Var. \textbf{7}, 69-95 (2002).

\bibitem{Foss1} M. Foss, \textit{Global regularity for almost minimizers of nonconvex variational problems}, Ann. Mat. Pura
Appl. (4) \textbf{187}(2), 263-321 (2008).

\bibitem{Foss2} M. Foss, A. Passarelli di Napoli, A. Verde, \textit{Global Morrey regularity results for asymptotically convex
variational problems}, Forum Math. \textbf{20}, 921-953 (2008).

\bibitem{Foss3} M. Foss, A. Passarelli di Napoli, A. Verde, \textit{Global Lipschitz regularity for almost minimizers of
asymptotically convex variational problems}, Ann. Mat. Pura Appl. (4) \textbf{189}, 127-162 (2010).

\bibitem{GPHigherregularity} A. Gentile, A. Passarelli di Napoli, \textit{Higher regularity for weak solutions to degenerate parabolic problems}, Calc. Var. \textbf{62}, 225 (2023).

\bibitem{GiaMod} M. Giaquinta, G. Modica, \textit{Remarks on the regularity of the minimizers of certain degenerate functionals}, Manuscripta Math. \textbf{57}, 55-99 (1986).


\bibitem{Giusti} E. Giusti, \textit{Direct Methods in the Calculus
of Variations}, World Scientific Publishing Co., 2003.

\bibitem{Kuusi-Mingione}
T. Kuusi, G. Mingione, 
\textit{New perturbation methods for nonlinear parabolic problems}, 
J. Math. Pures Appl. (9) \textbf{98} (2012), no. 4, 390-427. 

\bibitem{LePaVe}
C. Leone, A. Passarelli di Napoli, A. Verde,  
\textit{Lipschitz regularity for some asymptotically subquadratic
problems}, Nonlinear Anal. \textbf{67}, 1532-1539 (2007).

\bibitem{Lions} J.-L. Lions, \textit{Quelques méthodes de résolution
des problèmes aux limites non linéaires}, Gauthier-Villars, Paris,
1969.

\bibitem{mons2023higher} L. Mons, \textit{Higher regularity for minimizers of very degenerate integral functionals}, Journal of Mathematical Analysis and Applications \textbf{518} (2) (2023)  126717.

\bibitem{PassVer} A. Passarelli di Napoli, A. Verde, \textit{A regularity result for asymptotically convex problems with lower
order terms}, J. Convex Anal. \textbf{15}, 131-148 (2008).

\bibitem{Raymond} J.P. Raymond, \textit{Lipschitz regularity for some asymptotically convex problems}, Proc. R. Soc. Edinb. A \textbf{117} (1991) 59-73.

\bibitem{Roy} H. L. Royden, \textit{Real Analysis, }Third Edition,
Macmillan Publishing Company, New York, 1988. 

\bibitem{santambrogio2010continuity} F. Santambrogio, V. Vespri, \textit{Continuity in two dimensions for a very degenerate elliptic equation}, Nonlinear Analysis: Theory, Methods \& Applications \textbf{73} (12) (2010) 3832-3841.

\bibitem{Scheven} C. Scheven, T. Schmidt, \textit{Asymptotically regular problems I: Higher integrability}, J. Differ. Equ. \textbf{248},
745-791 (2010).

 













\end{thebibliography}
\end{document}